\newtheorem{theorem}{Theorem}[section]
\newtheorem{lemma}[theorem]{Lemma}
\newtheorem{corollary}[theorem]{Corollary}
\theoremstyle{definition}
\newtheorem{definition}[theorem]{Definition}
\theoremstyle{remark}
\newtheorem{remark}[theorem]{Remark}
\numberwithin{equation}{section}
\title{\bf  Creating band gaps in periodic media}
\author{Robert Lipton\thanks{Department of Mathematics, Louisiana State University,
Baton Rouge, LA 70803, USA,
{\tt lipton@math.lsu.edu}}
\and
Robert Viator Jr.
\thanks{Department of Mathematics,
Louisiana State University,
Baton Rouge, LA 70803, USA,
{\tt rviato2@lsu.edu}}}
\date{}
\begin{document}
\maketitle
\begin{abstract}

We identify explicit conditions on geometry and material contrast for creating band gaps in 2-d photonic and 3-d acoustic crystals. This approach is novel and makes use of the electrostatic and quasi-periodic source free resonances of the crystal. The source free modes deliver a spectral representation for solution operators associated with propagation of electromagnetic and acoustic waves inside periodic high contrast media. An accurate characterization of the quasi-periodic and electrostatic resonance spectrum  in terms of the shape and geometry of the scatters is possible. This information together with the Dirichlet and a Neumann like spectra associated with the inclusions provide conditions sufficent  for opening band gaps at finite contrast. The theory provides a systematic means for the identification of photonic and phononic band gaps within a specified frequency range.

\end{abstract}

%
%
\maketitle

\section{Introduction}
\label{introduction}

High contrast periodic media are known to exhibit unique optical, acoustic, and elastic properties \cite{J87}, \cite{Y}.  In this paper we identify new explicit conditions on geometry and material contrast for creating band gaps in 2-d photonic and 3-d acoustic crystals. We consider wave propagation through a periodic  medium in $\mathbb{R}^d$, $d=2,3$, made from  two materials. One of the materials is in the form of disjoint inclusions. The inclusions  are completely surrounded by the second material and do not touch the boundary of the period cell.  The material coefficient is taken to be $1$ inside the inclusions and and takes the value $k>1$ in the surrounding material. The union of all the inclusions $D_1,D_2,\ldots,D_n$ inside each period is denoted by $D$, see figure \ref{plane}.  The crystal occupies $\mathbb{R}^d$ and is described by the periodic array of inclusions  $\Omega=\cup_{m\in\mathbb{Z}^d}(D+m)$, with fundamental period cell $Y=(0,1]^d$. The material coefficient for the medium is written
$a(x)=k(1- \chi_{\scriptscriptstyle{\Omega}}(x))+ \chi_{\scriptscriptstyle{\Omega}}(x)$ where $\chi_{\scriptscriptstyle{\Omega}}$ is the indicator function for $\Omega$ taking the value $1$ inside $\Omega$ and zero outside. 

Wave propagation inside the crystal at frequency $\omega$ is described by the spectral problem
\begin{equation}
-\nabla\cdot(k(1- \chi_{\scriptscriptstyle{\Omega}}(x))+ \chi_{\scriptscriptstyle{\Omega}}(x))\nabla u(x) =\omega^2 u(x) ,\hbox{  $x\in\mathbb{R}^d$, $d=2,3$}
\label{Eigen0}
\end{equation}
Here the self-adjoint divergence form operator $L_k=-\nabla\cdot(k(1- \chi_{\scriptscriptstyle{\Omega}})+ \chi_{\scriptscriptstyle{\Omega}})\nabla$ is defined by the quadratic form in $L^2(\mathbb{R}^d)$, $d=2, 3$
\begin{equation}
\int_{\mathbb{R}^d}a(x)|\nabla u(x)|^2\,dx
\label{quadform}
\end{equation}
with domain $W^{1,2}(\mathbb{R}^d)$, $d=2,3$.  

This mathematical formulation describes wave propagation in both two and three dimensional acoustic crystals and electromagnetic wave propagation through two dimensional photonic crystals. For acoustic wave propagation the material coefficient $a^{-1}(x)=\rho(x)$ describes the mass density $\rho(x)$ of the periodic medium. For a two dimensional photonic crystal  $a^{-1}(x)=\epsilon(x)$ describes the dielectric constant of a non-magnetic medium given by a lattice of infinitely long parallel  rods periodically arranged in the plane transverse to the long axis of the rods. The electromagnetic wave travels along the transverse plane with magnetic field directed along the rods and the electric field in the plane.

Floquet theory \cite{ReedSimon}, \cite{Kuchment}, \cite{Wilcox}, \cite{OdehKeller}  shows that the spectrum $\sigma(L_k)$ has the band structure 
\begin{equation}
\sigma(L_k)=\cup_{\scriptscriptstyle{j\in\mathbb{N}}}S_j,
\label{bandofbands}
\end{equation}
where $S_j$ are the spectral bands associated with Bloch waves propagating inside the crystal.
The Bloch waves $h(x)$ satisfy 
\begin{equation}
 -\nabla\cdot(k(1- \chi_{\scriptscriptstyle{\Omega}}(x))+ \chi_{\scriptscriptstyle{\Omega}}(x))\nabla h(x) =\omega^2 h(x) ,\hbox{  $x\in\mathbb{R}^d$, $d=2,3$}
\label{Eigen1}
\end{equation}
\linebreak
together with the $\alpha$ quasi-periodicity condition $h(x+p)=h(x)e^{i\alpha\cdot p}$. Here the wave vector $\alpha$ lies in the first Brillouin zone of the reciprocal lattice given by $Y^\star=(-\pi,\pi]^d$.  For each $\alpha\in Y^\star$ the Bloch eigenvalues $\omega^2$ are of finite multiplicity and denoted by $\lambda_j(k,\alpha)$  with $\lambda_j(k,\alpha)\leq\lambda_{j+1}(k,\alpha)$, $j\in \mathbb{N}$.

The band structure for the crystal is described by the family of dispersion relations
\begin{equation}
\omega^2=\lambda_j(k,\alpha),\hbox{ $j\in\mathbb{N}$, $\alpha\in Y^\star$}
\label{DispersionRelationsglobal}
\end{equation}
and the spectral bands are given by the intervals
\begin{equation}
S_j=[\min_{\alpha\in Y^\star}\lambda_j(k,\alpha),\max_{\alpha\in Y^\star}\lambda_j(k,\alpha)].
\label{specbandupdown}
\end{equation}

\begin{figure} 
\centering
\begin{tikzpicture}[xscale=1.0,yscale=1.0]
\draw [thick] (-2,-2) rectangle (3,3);
\draw [fill=orange,thick] (-0.2,-0.6) circle [radius=1.25];
\draw [fill=orange,thick] (2.2,2.0) circle [radius=0.6];
\node [right] at (1.9,2.0) {$D_2$};
\draw [fill=orange,thick] (-1,2.2) circle [radius=0.65];
\node [right] at (-1.3,2.2) {$D_6$};
\draw[fill=orange,thick](-1.2,1.05) ellipse (20pt and 10pt);
\node [right] at (-1.5,1.05) {$D_5$};
\draw[fill=orange,thick](2.2,-0.1) ellipse (10pt and 20pt);
\node [right] at (1.85,-0.1) {$D_4$};
\node [below] at (-0.12,-0.3) {$D_1$};
\node [below] at (0.2,2.0) {$Y\setminus D$};
\draw [fill=orange, thick] plot[ smooth cycle] coordinates{(1,0) (2,1) (1,2) (1.1,1.1)};
\node [right] at (1.1,1.0) {$D_3$};
\end{tikzpicture} 
\caption{{\bf Period Cell.}}
 \label{plane}
\end{figure}
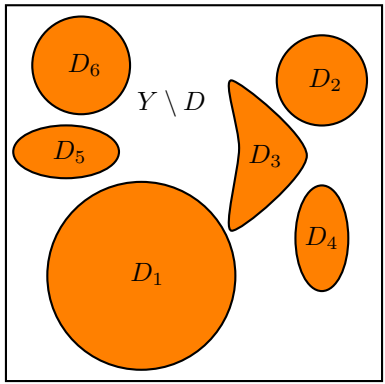

\noindent The upper and lower band edges of $S_j$ are denoted by 
\begin{equation}
b_j=\max_{\alpha\in Y^\star}\lambda_j(k,\alpha) \hbox{       \,\,\,and\,\,\,       }a_j=\min_{\alpha\in Y^\star}\lambda_j(k,\alpha)
\label{lowerupperbandedge}
\end{equation}
respectively. The band gaps are frequency intervals $\omega_{-}< \omega < \omega_+$ for which
no waves propagate inside the crystal, i.e.,
\begin{equation}
b_j<\omega_-^2<\omega_+^2<a_{j+1}.
\label{bandgapp}
\end{equation}

Over the past decades new theoretical insights into the nature of the frequency spectrum for high contrast periodic media have been made \cite{FigKuch2,FigKuch3,FigKuch1}. These efforts provide an asymptotic analysis that rigorously establishes the existence of band gaps for photonic and acoustic crystals made from thin walled cubic  lattices containing cubes of material with $a=1$ surrounded by walls with $a=k$. Band gaps are shown to appear in the limit as walls become vanishingly thin as $k\searrow 0$. 
More recently it has been shown that band gaps appear as one passes to the limit  $k\nearrow\infty$,  \cite{HempelLienau}. These gaps open in the vicinity of eigenvalues associated with the Dirichlet spectra of the included phase \cite{HempelLienau}, \cite{Friedlander}, \cite{Selden}, and \cite{AmmariKang1}. In this article we depart from previous high contrast asymptotic investigations and describe the location and  width of band gaps for finite values of the contrast $k>\overline{k}$, where $\overline{k}$ is given explicitly in terms of the crystal geometry  $\Omega$. We provide rigorous criteria that are based on the crystal geometry and material properties for opening band gaps in both 2 and 3 dimensional periodic materials, see Theorems   \ref{stopband},  \ref{passband},    \ref{stopbandgeneral}  \ref{passbandgeneral},  \ref{stopband2}, and   \ref{passband2}. These results apply  to a wide class of inclusion geometries associated with smooth boundaries.  This class of inclusion geometries include dispersions of smooth but not necessarily convex particles separated by a prescribed minimum distance, these are referred to as {\em buffered dispersions of inclusions} see section \ref{radiusgeneralshape}. The buffered dispersions are examples of a more general class of dispersions referred to as $P_\theta$ that can be characterized in a simple way in terms of energy inequalities described in Definition \ref{Ptheta}.

To illustrate the ideas we consider a photonic crystal where  $D\subset Y$  is a collection of circular  rod cross sections in the transverse plane described by $N$ disks of radius $a$.  The disks can be arranged in any configuration inside the period but neighbors can be no closer than a prescribed minimum distance $t$ inside the crystal $\Omega$ and we write $b=a+t$, see figure \ref{plane2}. We introduce the  Dirichlet spectra associated with the the Laplace operator $-\Delta$ on the inclusions. For this case the spectra is characterized by the number of disks $N$ and the Dirichlet spectrum associated with a single disk of radius $a$. We consider the part of the spectra associated with   eigenfunctions having nonzero average over the disk. These are denoted by $\delta_{0j}^\ast=\eta_{0,j}/a$ where $\eta_{0j}$ are the zeros of the Bessel function of order zero $J_0$. The Dirichlet eigenvalues 
associated with mean zero eigenfunctions are denoted by $\nu_{nk}=\eta_{nk}/a$ where $\eta_{n,k}$ is the $k^{th}$ zero of the $n^{th}$ Bessel function $J_n$, $1\leq n$. 
Next we introduce the roots $\nu_{0k}$ of the spectral function 
\begin{eqnarray}
S(\nu)=N\nu\sum_{k\in\mathbb{N}}\frac{a_{0k}^2}{\nu-(\delta_{0k}^\ast)^2}-1,
\label{extraspec}
\end{eqnarray}
where $a_{0k}=\int_D u_{0k}\,dx$ are averages of the rotationally symmetric normalized eigenfunctions $u_{0k}$ associated with the eigenvalues $\delta_{0j}^\ast$ and given by
\begin{eqnarray}
u_{0k}=J_0(r\eta_{0k}/a)/(a\sqrt{\pi}J_1(\eta_{0k})).
\label{rotintavg}
\end{eqnarray}
We write 
\begin{equation}
\sigma_N=\left\{\cup_{\scriptscriptstyle{j\in \mathbb{N}}}\nu_{0j}\right\}\bigcup\left\{\cup_{\scriptscriptstyle{(n,k)\in \mathbb{N}^2}}\nu_{nk}\right\}.
\label{neumannspectra}
\end{equation}
and the Dirichlet spectrum $\sigma(-\Delta_D)$ given by
\begin{equation}
\sigma(-\Delta_D)=\left\{\cup_{\scriptscriptstyle{j\in \mathbb{N}}}\delta_{0j}^\ast\right\}\bigcup\left\{\cup_{\scriptscriptstyle{(n,k)\in \mathbb{N}^2}}\nu_{nk}\right\}.
\label{neumannspectra}
\end{equation}

We now provide an explicit condition on the contrast $k$ that is sufficient to open a band gap in the vicinity of $\delta_{0j}^\ast$ together with explicit formulas describing its location and bandwidth. 

\begin{theorem}{Opening a band gap}\\
Given $\delta_{0j}^\ast$ define the  the set  $\sigma_N^+$ to be elements $\nu\in\sigma_N$ for which $\nu>\delta_{0j}^\ast$. The element in $\sigma_N^+$ closest to $\delta_{0j}^\ast$ is denoted by $\nu_{j+1}$. Set $d_j$ according to
\begin{equation}
d_j=\frac{1}{2}\min\left\{|\nu_{j+1}^{-1}-\nu^{-1}|;\hbox{   $\nu\in\sigma_N$}\right\}.
\label{disttospectrum}
\end{equation}
We define $\overline{r}_j$ to be
\begin{equation}
\label{upperr-j}
\overline{r}_j = \frac{\pi^2d_j(b^2-a^2)}{(b^2+a^2) + \pi^2d_j(b^2+3a^2)}.
\end{equation}
Then one has the band gap
\begin{equation}
\sigma(L_k)\cap\left(\delta_{0j}^\ast,\nu_{j+1}(1-\frac{\nu_{j+1}d_j}{k\overline{r}_j-1})\right)=\emptyset
\label{explicitgap}
\end{equation}
if
\begin{equation}
k>\overline{k}_j=\overline{r}_j^{-1}\left(1+\frac{d_j\nu_{j+1}}{1-\frac{\delta_{0j}^\ast}{\nu_{j+1}}}\right).
\label{explicitbandgap}
\end{equation}
\label{stopband}
\end{theorem}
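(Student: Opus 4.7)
The plan is to view Theorem~\ref{stopband} as the specialization to circular rod cross sections of the general band gap opening criterion furnished by Theorem~\ref{stopbandgeneral}. The abstract framework produces a spectral representation for the Bloch resolvent $(L_k - \omega^2)^{-1}$ expressed in terms of two families of source-free modes: the electrostatic (Poincar\'e--Neumann type) resonances associated with the high-contrast limit, and the quasi-periodic resonances tied to the Bloch parameter $\alpha \in Y^\star$. In this representation, each Bloch eigenvalue $\lambda_j(k,\alpha)$ solves an implicit dispersion relation that couples the Dirichlet eigenvalues of $-\Delta_D$ (which provide the poles as $k\to\infty$) with the discrete resonance eigenvalues whose reciprocals measure the strength of the electrostatic coupling across the inclusion boundaries. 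A spectral gap opens precisely when, for every $\alpha$, the dispersion relation admits no root inside the chosen frequency window.

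First I would invoke the general criterion to reduce the question to two items: (i) locating the Dirichlet and quasi-periodic/electrostatic spectra in the disk case, and (ii) producing an explicit lower bound $\overline{r}_j$ for the relevant electrostatic resonance radius tied to the buffered disk configuration with disk radius $a$ and minimum separation $t = b - a$. For (i) the rotational symmetry of a disk lets one separate variables in the Dirichlet problem: rotationally symmetric eigenfunctions give the eigenvalues $\delta_{0j}^\ast = \eta_{0j}/a$ with nonzero means $a_{0k}$ from~\eqref{rotintavg}, while the remaining mean-zero modes give the family $\nu_{nk} = \eta_{nk}/a$. The extra roots $\nu_{0k}$ of $S(\nu)$ in~\eqref{extraspec} then arise as the quasi-periodic resonance values coming from the coupling between the symmetric Dirichlet modes and the source-free field on $Y \setminus D$; the factor $N$ in $S(\nu)$ accounts for the $N$ identical disks per period. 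With $\sigma_N$ and $\sigma(-\Delta_D)$ in hand, the interval in~\eqref{explicitgap} is the largest frequency window about $\delta_{0j}^\ast$ on which the general criterion applies and whose right endpoint is driven toward $\nu_{j+1}$ as $k$ grows.

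For (ii), the explicit form~\eqref{upperr-j} should come from a variational estimate that lower bounds the $P_\theta$ constant of the buffered disk dispersion in terms of $a$ and $b$. Concretely, one constructs a test field on the annular buffer region $a < r < b$ that controls the Dirichlet-to-Neumann type energy of the electrostatic resonance problem, producing the geometric factor $(b^2 - a^2)/((b^2 + a^2) + \pi^2 d_j (b^2 + 3a^2))$ which then combines with $\pi^2 d_j$ to yield~\eqref{upperr-j}. Requiring, in addition, that the perturbation term $\nu_{j+1} d_j /(k\overline{r}_j - 1)$ remain strictly smaller than the reciprocal gap $1 - \delta_{0j}^\ast/\nu_{j+1}$ delivers the threshold~\eqref{explicitbandgap}. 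The main obstacle is step (ii): the explicit bound on $\overline{r}_j$ demands a careful choice of admissible potentials on the thin annular buffer and sharp control of the associated harmonic extension energies, since both $a$ and the separation $t$ enter quantitatively and must be tracked without losing constants. Once this geometric estimate is in place, the band gap statement follows by invoking Theorem~\ref{stopbandgeneral} and bookkeeping the resulting inequality.
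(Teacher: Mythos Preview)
Your overall plan is correct and matches the paper's proof: Theorem~\ref{stopband} is obtained from Theorem~\ref{stopbandgeneral} by specializing to buffered disks, using the Bessel-function description of $\sigma(-\Delta_D)$ and $\sigma_N$ together with the explicit geometric constant $\theta=(b^2-a^2)/(b^2+a^2)$, from which $\mu^\ast=-a^2/(b^2+a^2)$, $z^\ast$, and hence $\overline r_j$ in \eqref{upperr-j} follow by direct substitution into \eqref{upperrgeneral-j}. One terminological slip to fix: the roots $\nu_{0k}$ of $S(\nu)$ are \emph{not} ``quasi-periodic resonance values'' (those are the $\mu_i$ of the operator $T$); they are the extra elements of the limiting Neumann spectrum $\sigma_N=\sigma(A(0))^{-1}$ arising from Dirichlet eigenfunctions with nonzero mean, as in Theorem~\ref{equiv2}.
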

Next we provide an explicit condition on $k$  sufficient for the persistence of a spectral band together with explicit formulas describing its location and bandwidth. 

\begin{theorem}{Persistence of passbands}\\
Given  $\delta_{0j}^\ast$ define the  the set  $\sigma_N^-$ to be elements $\nu\in\sigma_N$ for which $\nu<\delta_{0j}^\ast$. The element in $\sigma_N^-$ closest to $\delta_{0j}^\ast$ is denoted by $\nu_{j}$.  Set $d_j$ according to
\begin{equation}
d_j=\frac{1}{2}\min\left\{|(\delta^\ast_{j0})^{-1}-\delta^{-1}|;\hbox{   $\delta\in\sigma(-\Delta_D)$}\right\}.
\label{disttospectrumDirichlet}
\end{equation}
Define $\underline{r}_j$ to be
\begin{equation}
\label{lowerr-j}
\underline{r}_j = \frac{2\pi^2d_j(b^2-a^2)}{(b^2+a^2) + 2\pi^2d_j(b^2+3a^2)}.
\end{equation}
Then one has a passband in the vicinity of $\delta_{0j}^\ast$ and
\begin{equation}
\sigma(L_k)\supset\left[\nu_j,\delta_{0j}^\ast(1-\frac{\delta_{0j}^\ast d_j}{k\underline{r}_j-1})\right]
\label{explicitband}
\end{equation}
if
\begin{equation}
k>\underline{k}_j=\underline{r}_j^{-1}\left(1+\frac{d_j\delta_{0j}^\ast}{1-\frac{\nu_j}{\delta_{0j}^\ast}}\right).
\label{explicitpassband}
\end{equation}
\label{passband}
\end{theorem}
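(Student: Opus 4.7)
The plan is to exploit the spectral representation of $L_k$ in terms of quasi-periodic and electrostatic source-free resonances introduced earlier in the paper, together with the min-max characterization of Bloch eigenvalues, to show that an entire subinterval of the $j$-th spectral band survives at finite contrast. The argument mirrors that of Theorem~\ref{stopband} (which excludes an interval from the spectrum) but runs in reverse: here one must place an interval inside the spectrum by producing Bloch modes at every intermediate frequency.

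First I would record that each Bloch eigenvalue $\lambda_j(k,\alpha)$ is continuous in $\alpha\in Y^\star$; by compactness of $Y^\star$ this forces each spectral band $S_j=[a_j(k),b_j(k)]$ to be a closed interval. Hence to establish the inclusion $[\nu_j,\delta_{0j}^\ast(1-\frac{\delta_{0j}^\ast d_j}{k\underline{r}_j-1})]\subset \sigma(L_k)$ it is enough to exhibit a single index $j$ and two quasi-momenta $\alpha^{-},\alpha^{+}\in Y^\star$ satisfying $\lambda_j(k,\alpha^{-})\le \nu_j$ and $\lambda_j(k,\alpha^{+})\ge \delta_{0j}^\ast(1-\frac{\delta_{0j}^\ast d_j}{k\underline{r}_j-1})$.

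I would then use the spectral representation on each Floquet fiber to identify these endpoints. In the $k\to\infty$ limit the fiber eigenvalues collapse onto two disjoint reservoirs: the Dirichlet values in $\sigma(-\Delta_D)$, associated with limit eigenfunctions vanishing on $\partial D$, and the Neumann-like values in $\sigma_N$, associated with modes that retain energy in the matrix $Y\setminus D$. A perturbation analysis for large but finite $k$ should show that (a) near $\delta_{0j}^\ast$ the band attains values strictly below the upper endpoint claimed in \eqref{explicitband}, with the quantitative shift of order $(k\underline{r}_j-1)^{-1}$ generated by the electrostatic resonance bounds encoded in $\underline{r}_j$ (itself built from the geometric parameters $a,b$ and the Dirichlet spectral gap $d_j$), and (b) at a distinct quasi-momentum the eigenvalue is already at or below $\nu_j$, coming from the limiting $\sigma_N$-mode. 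By continuity in $\alpha$ all intermediate values are in $S_j\subset\sigma(L_k)$.

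The main obstacle is the sharp quantitative resolvent bound on the fiber operator. Concretely, one must prove that the $j$-th Bloch eigenvalue remains at least $\delta_{0j}^\ast d_j/(k\underline{r}_j-1)$ below $\delta_{0j}^\ast$ uniformly on a sub-region of $Y^\star$, using that $d_j$ in \eqref{disttospectrumDirichlet} separates $\delta_{0j}^\ast$ from the whole of $\sigma(-\Delta_D)$ in the reciprocal metric — not merely from $\sigma_N$ as in the band-gap theorem. The hypothesis $k>\underline{k}_j$ is precisely what guarantees the right-hand side of \eqref{explicitband} is nontrivially positive and renders this resolvent estimate available; once it is in hand, continuity in $\alpha$ and the intermediate value theorem close the argument.
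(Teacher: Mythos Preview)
Your outline matches the paper's proof of Theorem~\ref{passbandgeneral} (from which Theorem~\ref{passband} is a specialization to disks): pick $\alpha^-$ with $\lambda_j(k,\alpha^-)\le\nu_j$ and $\alpha^+$ with $\lambda_j(k,\alpha^+)$ above the right endpoint of \eqref{explicitband}, then use continuity in $\alpha$. Two things are missing or misdirected.

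First, $\alpha^-=0$ is forced, not merely convenient: by Theorem~\ref{equiv22} the limit spectrum at $\alpha=0$ is exactly $\sigma_N$, so $\lambda_j(k,0)\nearrow\nu_j$ as $k\to\infty$, and monotonicity in $k$ gives $\lambda_j(k,0)\le\nu_j$ for all $k>1$ with no further work. For $\alpha\ne0$ the limit spectrum is $\sigma(-\Delta_D)$ (Theorem~\ref{equiv12}), which contains $\delta_{0j}^\ast$ but not $\nu_j$, so a generic ``distinct quasi-momentum'' will not do.

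Second, your statement of the ``main obstacle'' has the inequality backwards, and your item~(a) inherits the same error. You wrote that one must show $\lambda_j(k,\cdot)$ remains \emph{at least} $\delta_{0j}^\ast d_j/(k\underline r_j-1)$ below $\delta_{0j}^\ast$, and that the band ``attains values strictly below the upper endpoint'' of \eqref{explicitband}; both would defeat the inclusion. What is needed is that $\lambda_j(k,\alpha^+)$ lies \emph{at most} $(\delta_{0j}^\ast)^2 d_j/(k\underline r_j-1)$ below $\delta_{0j}^\ast$, so that it exceeds the right endpoint of \eqref{explicitband}. The paper does not obtain this from an abstract resolvent bound but from the convergent power series \eqref{foureleven6} for $\beta_j^\alpha(z)=\lambda_j(k,\alpha)^{-1}$ and the truncation estimate \eqref{errorestalpha} of Theorem~\ref{errorestimatesforexpansions} with $p=0$: one gets $|\beta_j^\alpha(1/k)-(\delta_{0j}^\ast)^{-1}|\le d_j/(kr^\ast-1)$, which after inversion and the a priori bound $\lambda_j(k,\alpha)\le\delta_{0j}^\ast$ yields $|\delta_{0j}^\ast-\lambda_j(k,\alpha)|<(\delta_{0j}^\ast)^2 d_j/(kr^\ast-1)$. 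The explicit form \eqref{lowerr-j} of $\underline r_j$ then arises by maximizing the convergence radius $r^\ast$ in \eqref{radiusalphanotzero} over $\alpha\in Y^\star$, the maximum occurring at $|\alpha|^2=d\pi^2$ (so $2\pi^2$ in $d=2$); and the threshold \eqref{explicitpassband} is exactly the condition that the resulting lower bound on $\lambda_j(k,\alpha^+)$ exceed $\nu_j$, making the interval in \eqref{explicitband} nondegenerate.
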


Note that both thresholds $\overline{k}_j$ and $\underline{k}_j$ depend explicitly on the crystal geometry through $a$ and $b$ and the Dirichlet spectrum of the disk. Density results on the distribution of zeros of Bessel functions \cite{Joo}, \cite{Elbert} show that the distance between adjacent eigenvalues $d_j$ approaches zero with increasing $j$. This implies together with  \eqref{upperr-j} and \eqref{explicitbandgap}  that the contrast sufficient to open gaps  grows without bound  as $j\rightarrow \infty$.   

The same mechanism can be used to open band gaps  when the coefficient satisfies $a(y)=\frac{1}{k}<1$ inside the array of inclusions and equals $1$ outside. This is shown to follow from a {\em reciprocal relation} satisfied by the spectrum. These aspects are discussed in the concluding section where a second  application to H-polarized modes inside photonic crystals is provided. 
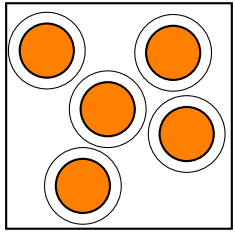
\begin{figure} 
\centering
\begin{tikzpicture}[xscale=0.6,yscale=0.6]
\draw [thick] (-2,-2) rectangle (3,3);
\draw [fill=orange,thick] (0.25,0.65) circle [radius=0.6];
\draw (0.25,0.65) circle [radius=0.85];
\draw [fill=orange,thick] (-1.1,1.95) circle [radius=0.6];
\draw (-1.1,1.95) circle [radius=0.85];
\draw [fill=orange,thick] (-0.3,-1.05) circle [radius=0.6];
\draw (-0.3,-1.05) circle [radius=0.85];
\draw [fill=orange,thick] (2.0,0.10) circle [radius=0.6];
\draw  (2.0,0.10) circle [radius=0.85];
\draw [fill=orange,thick] (1.7,1.9) circle [radius=0.6];
\draw  (1.7,1.9) circle [radius=0.85];

\end{tikzpicture} 
\caption{\bf Shaded regions are inclusions
of radius $a$ surrounded by a shell of thickness $t$.}
 \label{plane2}
\end{figure}

In this paper we introduce an approach to quantitatively describe  band structure as  illustrated in Theorems \ref{stopband} and \ref{passband}. This method is used to establish explicit conditions for  band gap opening and persistence of passbands that apply to a wide class of inclusion shapes see,  Theorem \ref{stopbandgeneral} and Theorem \ref{passbandgeneral}.  We begin by introducing the Neumann spectrum defined by the spectral problem on $Y$ given by
\begin{equation}
 -\nabla\cdot(k(1- \chi_{\scriptscriptstyle{D}}(x))+ \chi_{\scriptscriptstyle{D}}(x))\nabla h(x) =\omega^2 h(x) ,\hbox{  $x\in Y$}
\label{Eigen1neumann}
\end{equation}
\linebreak
together with the homogeneous Neumann boundary condition, where $h$ is a function in $H^1(Y)$ with $\int_Y h \,dx=0$. Here $\chi_{\scriptscriptstyle{D}}$ denotes the indicator function of $D$ in the unit period.
The Neumann eigenvalues for \eqref{Eigen1neumann} at fixed $k>1$ are written $\nu_j(k)$, $j=1,2,\ldots$ and ordered according to min-max with $0<\nu_1(k)\leq\nu_2(k),\cdots$.
The Bloch wave problem can be can also be restricted to the unit period and  is given by the spectral problem
\begin{equation}
 -\nabla\cdot(k(1- \chi_{\scriptscriptstyle{D}}(x))+ \chi_{\scriptscriptstyle{D}}(x))\nabla h(x) =\omega^2 h(x) ,\hbox{  $x\in Y$}
\label{Eigen1localBloch}
\end{equation}
\linebreak
together with the $\alpha$ quasi-periodicity condition now expressed as $h(x)=u(x)e^{i\alpha\cdot x}$, where $u(x)$ is a periodic function in $H^1_{loc}(\mathbb{R}^d)$ with unit period $Y$. For $\alpha=0$ we require $\int_Y u \,dx=0$.
For future reference we note for $k>1$ and from the min-max principle that the Neumann spectrum provides a lower bound on the Bloch spectrum given by
\begin{equation}
\nu_j(k)\leq\lambda_j(k,\alpha), \hbox{   for $\alpha\in Y^\ast$}.
\label{minmax}
\end{equation}

Our approach is based upon: 1) a representation of the Bloch eigenvalues \eqref{DispersionRelationsglobal} as convergent series expressed in terms of the contrast $k$  with explicitly defined  convergence radii as in \cite{RobertRobert1} and 2) a representation of the Neumann eigenvalues  as convergent series expressed in terms of the contrast $k$  with explicitly defined  convergence radii.
In what follows we proceed  as in \cite{RobertRobert1} to recover explicit bounds on the radii of convergence of the series representation for Neumann eigenvalues by deriving a spectral representation formula for the inverse operator $(-\nabla\cdot (k (1-\chi_{ D} )+ \chi_{D})\nabla)^{-1}$. To proceed we complexify the problem and consider $k\in\mathbb{C}$ noting that the divergence form operator $-\nabla\cdot (k(1- \chi_{D} )+ \chi_{D})\nabla$
is no longer uniformly elliptic. Our approach does not rely on ellipticity and we develop an explicit representation formula for $-\nabla\cdot (k(1- \chi_{D} ) + \chi_{D})\nabla$ that holds for complex values of $k$.  We identify the subset $z={1}/{k}\in\Omega_0$ of $\mathbb{C}$ where this operator is invertible. The explicit formula shows that the solution  operator $(-\nabla\cdot (k (1-\chi_{ D} )+ \chi_{D})\nabla)^{-1}$ is a meromorphic operator valued function of $z$ for $z\in\Omega_0=\mathbb{C}\setminus S$, see section \ref{asymptotic}  and Lemma \ref{inverseoperator}. Here the set $S$ is discrete and consists of poles lying on the negative real axis with only one accumulation point at $z=-1$. For the problem treated here we expand about $z=0$ and the distance between $z=0$ and the set $S$ is used to bound the radius of convergence for the  series. The spectral representation for $-\nabla\cdot (k  (1-\chi_{ D} ) + \chi_{D})\nabla$ follows from the existence of a complete orthonormal set of functions associated with the {\em Neumann electrostatic resonances of the crystal}, i.e., functions $v$ such that $n\cdot \nabla v=0$ on the boundary of $Y$ and real eigenvalues $\lambda$ for which
\begin{eqnarray}
\label{sourcefree}
-\nabla\cdot \chi_{D}\nabla v=-\lambda \Delta v.
\end{eqnarray}
These resonances are  connected to the spectra of Neumann-Poincar\'e operators  associated with double layer potentials discussed in \cite{Kang}, \cite{Shapero}. They are  similar in spirit to  the well known electrostatic resonances identified in the composites literature and useful for bounding effective properties \cite{BergmanES}, \cite{BergmanC}, \cite{MiltonES}, \cite{Milton}, \cite{Bruno} and \cite{GoldenPap}.   

The spectral representation is applied to analytically continue  the Neumann spectra for complex values of $k$, see Theorem \ref{extension}. 
Application of  the contour integral formula for spectral projections \cite{SzNagy}, \cite{TKato1}, \cite{TKato2} delivers an analytic representation formula for the spectral projection  associated with  the component of Neumann spectrum contained inside contours surrounding the limit eigenvalue  $\nu_j(\infty)$  see, section \ref{asymptotic}.  We provide an analytic perturbation theory  in section \ref{asymptotic} together with a calculation provided in section \ref{derivation} to find an explicit formula for the radii of convergence for the power series representation for  spectral projections. The formula shows that the radius of convergence  is determined by: 1) the distance of the origin to the nearest pole $z^\ast$ of $(-\nabla\cdot (k (1-\chi_{D}) + \chi_{D})\nabla)^{-1}$, and 2) the separation between distinct Neumann eigenvalues in the $z=1/k\rightarrow 0$ limit see Theorem \ref{separationandraduus-Neumann}. 
On restricting $k$ to be real $(-\nabla\cdot (k (1-\chi_{D}) + \chi_{D})\nabla)^{-1}$ becomes self adjoint and we recover a series representation for the Neumann eigenvalues \eqref{foureleven}, \eqref{fourtwelve}. Similar theorems on power series and radii of convergence for spectral projections associated with
the Bloch spectra $\lambda_j(k,\alpha)$  are established in \cite{RobertRobert1} making use of {\em quasi-periodic source free modes}. We adapt these results  to the present context noting that the operator is self adjoint when $k$ is real to recover a convergent series representation for Bloch eigenvalues see, \eqref{foureleven6} and \eqref{fourtwelve6}.These formulas together with an analogue of Cauchy's inequality provide explicit error estimates for the difference between the full power series and the leading order term for both Bloch and Neumann Eigenvalues  see, Theorem \ref{errorestimatesforexpansions}. We proceed to identify the leading order terms in series expansions for both Neumann and Bloch eigenvalues in section \ref{limitspeczero}. The leading order terms are shown to be elements of the limiting Dirichlet and Neumann spectrum of the included domain $D$ in agreement with   \cite{HempelLienau}. The fundamental theorems on band gaps and passbands, Theorems \ref{stopbandgeneral} and Theorem \ref{passbandgeneral}, are shown to follow from application of Theorem \ref{errorestimatesforexpansions} parts 1 and 3 and the interlacing property of the  limit spectrum as $k\rightarrow \infty$. The theory presented here can be used to quickly search for photonic and phononic band gaps within a prescribed frequency range.  Once identified these gaps can be maximized by applying topology optimization or level set methods to find inclusion shapes and lattice geometries that give the largest band gaps, see \cite{CoxDobsonH}, \cite{WangJensenSigmund2011}, \cite{MenLeeFreundPeraireJohnson}, \cite{KaoOsherYablonovich2005}, \cite{BendsoeSigmund}.

The paper is organized as follows: In the next section we introduce the Hilbert space formulation of the Neumann eigenvalue problem and the variational formulation of the Neumann electrostatic resonance problem. The completeness of the eigenfunctions associated with the electrostatic resonance spectrum is established and a spectral representation for the operator $-\nabla\cdot (k (1-\chi_{D}) + \chi_{D})\nabla$ is obtained. These results are collected and used to continue the Neumann eigenvalues  as  functions of $k$, off the real axis onto the complex plane, see  Theorem \ref{extension} of section \ref{bandstructure}. Spectral perturbation theory \cite{KatoPerturb} is applied to recover  the series representation for Neumann eigenvalues in the neighborhood of $\infty$ in section \ref{asymptotic}. The series expansion for the Bloch spectra is developed in section \ref{asymptotic6}. The leading order spectral theory for the Neumann and Bloch spectra is developed in section  \ref{limitspeczero}.  The main theorems on radius of convergence  are presented in section \ref{radius} and given by Theorems \ref{separationandraduus-alphanotzero}, \ref{separationandraduus-alphazero}, \ref{separationandraduus-Neumann}. These theorems apply to the class composite crystals $P_\theta$ described  in Definition \ref{Ptheta} provided in section \ref{radiusgeneralshape}.  The explicit radii of convergence for distributions of identical disks is presented in section \ref{radiusmultiplescatterers}. 
The fundamental theorems on band gaps and passbands are presented and proved in section \ref{bandgappassband} and are applied to recover  Theorems \ref{stopband} and  \ref{passband} for all crystal geometry's belonging to $P_\theta$. The explicit formulas for the convergence radii are derived in section \ref{derivation} as well as the proof of Theorem  \ref{separationandraduus-Neumann}  and the error estimates for  the series truncated after N terms. We conclude in section \ref{concludingsection} with a reciprocal relation and the identification of spectral gaps for the dual problem $a=\frac{1}{k}<1$ in $\Omega$ and $a=1$ outside.

\section{Hilbert space setting, Neumann electrostatic resonances and representation formulas}
\label{layers}

We denote the space of all square integrable complex valued functions $h$  defined on $Y$ with $\int_Y h\,dx=0$ by $L_{0}^2(Y)$ and the $L^2$ inner product over $Y$ is written
\begin{eqnarray}
(u,v)=\int_Y u\overline{v}\,dx.
\label{l2}
\end{eqnarray}
The eigenfunctions $h$ for  \eqref{Eigen1neumann} belong to the space
\begin{equation}
\mathcal{H} = \{ h \in  H^1(Y): \hbox{   $\int_Yh\,dx=0$} \}.
\label{H1}
\end{equation}
\linebreak
The space  $\mathcal{H}$  is a Hilbert space under the inner product
\begin{equation}
\langle u,v \rangle= \int_{Y} \nabla u(x) \cdot \nabla \bar{v}(x)dx.
\label{innerproduct}
\end{equation}
\linebreak
For any $k \in \mathbb{C}$, the weak formulation of the eigenvalue problem \eqref{Eigen1neumann} for $h$ and $\omega^2$ can be written as
\begin{eqnarray}
B_k(h,v)=\omega^2(h,v)
\label{weak}
\end{eqnarray}
for all $v$ in $\mathcal{H}$ where $B_k : \mathcal{H} \times \mathcal{H} \longrightarrow \mathbb{C}$ is the sesquilinear form
\begin{equation}
B_k(u,v) =   k\int_{Y \setminus D} \nabla u(x) \cdot \nabla \bar{v}(x)dx +  \int_{D} \nabla u(x) \cdot \nabla \bar{v}(x)dx.
\label{sesquoperator}
\end{equation}
\linebreak
The linear operator $T_k: \mathcal{H}  \longrightarrow \mathcal{H} $ associated with $B_k$ is defined by
\begin{equation}
\langle T_k u, v \rangle := B_k(u,v).
\label{Threetwo}
\end{equation}

In what follows we decompose $\mathcal{H}$  into invariant subspaces of source free modes and identify the associated Neumann electrostatic resonance spectra. This decomposition will provide an explicit spectral representation for the operator $T_k$, see Theorem \ref{T_z spectrum}. Let $W_1 \subset \mathcal{H}$ be the completion in $\mathcal{H}$ of the subspace of functions with support away from $D$. Now let $\tilde{H}^1_0(D)$ denote the subspace of functions $H^1_0(D)$ extended by zero into $Y\setminus D$ and let $1_Y$ be the indicator function of $Y$. We define $W_2 \subset \mathcal{H}$ to be the subspace of functions given by 
\begin{equation}
\label{definitionW2periodic}
W_2 = \{ u = \tilde{u} - \left( \int_D \tilde{u} dx \right) 1_Y \; \mid \;  \tilde{u} \in \tilde{H}^1_0(D)\}.
\end{equation}
Clearly $W_1$ and $W_2$ are orthogonal subspaces of $\mathcal{H}$, so define $W_3 := (W_1\oplus W_2)^{\bot}$.  We therefore have 
\begin{equation}
\mathcal{H}= W_1 \oplus W_2 \oplus W_3.
\end{equation}
The orthogonal decomposition and integration by parts shows that elements $u \in W_3$ are harmonic separately in $D$ and $Y\setminus D$, and 
\begin{eqnarray}
\label{normal1}
&&\frac{\partial u}{\partial n} =0\hbox{  on  $\partial Y$     and  }\\
&&\int_{\partial D}{\frac{\partial u}{\partial n}}\mid^{+}_{\partial D}\, ds=0,\label{normal2}\\
&&\int_{\partial D}{\frac{\partial u}{\partial n}}\mid^{-}_{\partial D}\, ds=0,\label{normal3}
\end{eqnarray}
where ${\frac{\partial u}{\partial n}}\mid^{-}_{\partial D}$ and $ {\frac{\partial u}{\partial n}}\mid^{+}_{\partial D}$ are traces of the outward directed normal derivative taken from the interior of $D$ and exterior of $D$ respectively.

To set up the spectral analysis note  that elements of $W_3$ can be represented in terms of single layer potentials supported on $\partial D$.  We introduce the  $d$-dimensional Newtonian potential, $d=2,3$
given by
\begin{equation}
\Gamma_d(x,y) = \left\{\frac{1}{2\pi}ln{|x-y|} \hbox{  for   $d=2$,    and   }-\frac{1}{4\pi}|x-y|^{-1}\hbox{   for  $d=3$  }\right\}.
\label{Newtonian}
\end{equation}
Let $\phi(x,y)$ satisfy
\begin{equation}
 -\Delta_y \phi(x,y) =0, \hbox{    for   $(x,y)\in Y\times Y$},
\label{Homogeneous}
\end{equation}
with
\begin{equation}
\label{boundarycorrection}
\frac{\partial \phi(x,y)}{\partial n(y)}=-\frac{\partial \Gamma_d(x,y)}{\partial n(y)} \hbox{   for   
   $y\in\partial Y$       and    $x\in Y$}.
\end{equation}
The Neumann Green's function  is given by
\begin{equation}
G(x,y)=\Gamma_d(x,y)+\phi(x,y).
\label{neumanngreens}
\end{equation}
Let $H^{1/2}(\partial D)$ be the fractional Sobelev space on $\partial D$ defined in the usual way, and its dual by $H^{-1/2}(\partial D)$.  For $\rho \in H^{-1/2}(\partial D)$  the single layer potential is given by
\begin{equation}
S_D\rho(x) = \int_{\partial D} G(x,y)\rho(y)d\sigma(y)\text{,           }x\in Y.
\end{equation}
The jump in normal derivative across $\partial D$ belongs to $H^{-1/2}(\partial D)$ and is written:
\begin{equation}
\rho_u={\frac{\partial u}{\partial n}}\mid^{+}_{\partial D}-{\frac{\partial u}{\partial n}}\mid^{-}_{\partial D}
\label{jumpnormal}
\end{equation}
For $u\in W_3$ we have the identity
\begin{equation}
u=S_D\rho_u
\label{identforW3}
\end{equation}
where $\int_{\partial D}\rho_u\,ds=0$ follows from \eqref{normal2} and \eqref{normal3}.
and we introduce $H_0^{-1/2}(\partial D)=\{\rho\in H^{-1/2}(\partial D):\,\int_{\partial D}\rho\,ds=0\}$, to see that $S_D:\, H_0^{-1/2}(\partial D) \rightarrow W_3$ maps $H_0^{-1/2}(\partial D)$ onto $W^3$.
It follows from \cite{CostabelBdryOps}, that for any $\rho \in H^{-1/2}(\partial D)$ 
\begin{eqnarray}
\Delta S_D\rho & = & 0\text{ in } D \text{ and } Y\setminus D,\nonumber \\
S_D\rho \mid^{+}_{\partial D} & = & S_D\rho \mid^{-}_{\partial D},\nonumber\\
\frac{\partial}{\partial n}S_D\rho \mid^{\pm}_{\partial D} & = & \pm \frac{1}{2}\rho + {K}_D^{*}\rho,
\label{singlelayer}
\end{eqnarray}
where $n$ is the outward directed normal vector on $\partial D$ and ${K}_D^{*}$ is the Neumann Poincar\'e operator defined by
\begin{equation}
{K}_D^{\ast}\rho(x) = \text{ p. v. } \int_{\partial D} \frac{\partial G(x,y)}{\partial n(x)}\rho(y)d\sigma(y)\text{,    } x \in \partial D,
\label{neumannp}
\end{equation}
and
$K_D$ is the Neumann Poincar\'e operator
\begin{equation}
K_D\rho(x) = \text{ p. v. } \int_{\partial D} \frac{\partial G(x,y)}{\partial n(y)}\rho(y)d\sigma(y)\text{,    } x \in \partial D.
\label{adjointneumannp}
\end{equation}

In what follows we assume the boundary  $\partial D$ is $C^{1,\gamma}$, for some $\gamma>0$.
Here the layer potentials  ${K}_D$, and ${K}_D^*$ are continuous linear mappings from $L^2(\partial D)$ to $L^2(\partial D)$ and compact, since  $\frac{\partial G(x,y)}{\partial n(x)}$ is a continuous kernel of order $d-2$ in dimensions $d=2,3$.  The operator $S_{D}$ is a continuous linear map from $H^{-1/2}_0(\partial D)$ into $W_3$ and we define $S_{\partial D} \rho = S_D \rho \mid_{\partial D}$ for all $\rho \in H^{-1/2}_0(\partial D)$.  Here $S_{\partial D} : H^{-1/2}_0(\partial D) \longrightarrow H^{1/2}(\partial D)$ is continuous and has bounded inverse, see \cite{CostabelBdryOps}.

One readily verifies the symmetry 
\begin{eqnarray}
G(x,y)=G(y,x),
\label{greensymmetries}
\end{eqnarray}
and application delivers the Plemelj symmetry for $K_D$, ${K}^*_D$ and $S_{\partial D}$ as operators on $L^2(\partial D)$  given by
\begin{eqnarray}
K_DS_{\partial D}=S_{\partial D}K^*_D.
\label{Plemelj}
\end{eqnarray}
Moreover as seen in \cite{Shapero} the operator $-S_{\partial D}$ is positive and selfadjoint in $L^2(\partial D)$ and in view of \eqref {Plemelj}    $K_D^*$ is  a compact operator on  $H_0^{-1/2}(\partial D)$.

Let $G : W_3 \longrightarrow H^{1/2}(\partial D)$ be the trace operator, which is bounded and onto.
\begin{lemma}
$S_D: H_0^{-1/2}(\partial D) \longrightarrow W_3$ has bounded inverse $S_D^{-1} = S_{\partial D}^{-1}G$.
\end{lemma}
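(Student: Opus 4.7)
The plan is to leverage two facts already in place: first, the surjectivity of $S_D:H_0^{-1/2}(\partial D)\to W_3$ stated just before the lemma (every $u\in W_3$ has a representation $u=S_D\rho_u$ with $\rho_u\in H_0^{-1/2}(\partial D)$); and second, the fact from \cite{CostabelBdryOps} that the boundary integral operator $S_{\partial D}:H_0^{-1/2}(\partial D)\to H^{1/2}(\partial D)$ is continuous with bounded inverse. The only things really left to verify are injectivity of $S_D$ and the identification of the inverse with $S_{\partial D}^{-1}G$.

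First I would check injectivity. Suppose $\rho\in H_0^{-1/2}(\partial D)$ satisfies $S_D\rho=0$ as an element of $W_3$. Taking the trace on $\partial D$ gives $S_{\partial D}\rho=G(S_D\rho)=0$. Since $S_{\partial D}$ has bounded inverse on $H_0^{-1/2}(\partial D)$, it is injective, so $\rho=0$. Thus $S_D$ is a bijection $H_0^{-1/2}(\partial D)\to W_3$.

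Next I would derive the formula. For any $u\in W_3$, write $u=S_D\rho_u$ with $\rho_u\in H_0^{-1/2}(\partial D)$ as guaranteed above. Applying the trace operator $G$ and using the continuity property $S_D\rho|^+_{\partial D}=S_D\rho|^-_{\partial D}=S_{\partial D}\rho$ from \eqref{singlelayer}, we obtain
\begin{equation*}
Gu=S_{\partial D}\rho_u,
\end{equation*}
so that $\rho_u=S_{\partial D}^{-1}Gu$. This is exactly the identification $S_D^{-1}=S_{\partial D}^{-1}G$.

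Finally, boundedness of $S_D^{-1}$ follows by composition: the trace operator $G:W_3\to H^{1/2}(\partial D)$ is bounded (standard trace theorem for $H^1(D)$ and $H^1(Y\setminus D)$ functions restricted to the Lipschitz curve $\partial D$, with the $W_3$ norm controlled by $\|\nabla u\|_{L^2(Y)}$), and $S_{\partial D}^{-1}:H^{1/2}(\partial D)\to H_0^{-1/2}(\partial D)$ is bounded by the invertibility statement cited above; composing the two bounds gives continuity of $S_D^{-1}$. No single step is really an obstacle here — the whole lemma is essentially an assembly step — but the one place that requires a little care is ensuring the range of $G$ on $W_3$ actually lies inside the range of $S_{\partial D}$ so that $S_{\partial D}^{-1}G$ makes sense; this is precisely what the surjectivity of $S_D$ onto $W_3$ guarantees.
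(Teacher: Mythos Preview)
Your proof is correct, but it takes a slightly different route than the paper's. You lean on the surjectivity identity $u=S_D\rho_u$ already recorded before the lemma, then verify injectivity of $S_D$ via injectivity of $S_{\partial D}$, and read off the inverse formula by tracing both sides. The paper instead proceeds more self-containedly: given $u\in W_3$ it sets $w=S_D(S_{\partial D}^{-1}Gu)$, observes $w\in W_3$ and $G(w-u)=0$, deduces $w-u\in W_1\oplus W_2$, and concludes $w=u$ from $W_3=(W_1\oplus W_2)^\perp$. Your argument is a clean algebraic assembly of pieces already on the table; the paper's argument bypasses the prior surjectivity statement and uses the orthogonal decomposition structure directly. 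Both are short and essentially equivalent in strength, and the boundedness step is identical.
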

\begin{proof}
Suppose $u\in W_3$, and consider $G u = u\mid_{\partial D} \in H^{1/2}(\partial D)$.  For all $x \in Y$ define $w(x) = S_D(S_{\partial D}^{-1}G u)$.  Since $u, w \in W_3$, it follows that $w-u \in W_3$ as well.  Since $G u = G w$, we have that $G (w-u) = 0$, and so $w-u \in (W_1 \oplus W_2)$. But $W_3 = (W_1\oplus W_2)^{\bot}$, so $w=u$ as desired. The boundedness follows from the continuity of $S_{\partial D}^{-1}$ and $G$.
\end{proof}
We introduce an auxiliary operator $T:W_3 \longrightarrow W_3$, given by the sesquilinear form
\begin{equation}
\langle Tu,v \rangle = \frac{1}{2} \int_{Y \setminus D} \nabla u(x) \cdot \nabla \bar{v}(x)dx - \frac{1}{2}\int_{D} \nabla u(x) \cdot \nabla \bar{v}(x)dx.
\label{threefourteen}
\end{equation}
The next theorem will be useful for the spectral decomposition of $T_k$.
\begin{theorem}
The linear map $T$ defined in equation \eqref{threefourteen} is given by
$$T = S_D K_D^*S_D^{-1}$$
and is compact and self-adjoint.
\end{theorem}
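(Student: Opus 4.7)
The plan is to prove $T = S_D K_D^* S_D^{-1}$ by testing the sesquilinear form defining $T$ against an arbitrary $v \in W_3$ and showing the result matches that of $S_D K_D^* S_D^{-1}$. Since every $u \in W_3$ can be written uniquely as $u = S_D \rho_u$ with $\rho_u = S_D^{-1} u \in H_0^{-1/2}(\partial D)$, the key tool will be integration by parts combined with the jump relations \eqref{singlelayer}.

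First I would exploit that elements of $W_3$ are harmonic in both $D$ and $Y\setminus D$ and satisfy $\partial u/\partial n = 0$ on $\partial Y$. Letting $n$ denote the unit outward normal on $\partial D$ (pointing from $D$ into $Y\setminus D$), Green's identity gives
\begin{equation*}
\int_D \nabla u \cdot \nabla \bar v \, dx = \int_{\partial D} \bar v \, \tfrac{\partial u}{\partial n}\big|^{-}_{\partial D}\, ds, \qquad
\int_{Y\setminus D} \nabla u \cdot \nabla \bar v \, dx = -\int_{\partial D} \bar v \, \tfrac{\partial u}{\partial n}\big|^{+}_{\partial D}\, ds,
\end{equation*}
where the boundary contribution on $\partial Y$ vanishes because $u \in W_3$. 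Inserting these into the definition \eqref{threefourteen} and using the key identity $\frac{\partial u}{\partial n}|^+ + \frac{\partial u}{\partial n}|^- = 2 K_D^* \rho_u$ (obtained by summing the two lines of \eqref{singlelayer}) yields
\begin{equation*}
\langle Tu, v \rangle = -\tfrac{1}{2}\int_{\partial D}\bar v \left[\tfrac{\partial u}{\partial n}\big|^{+}_{\partial D} + \tfrac{\partial u}{\partial n}\big|^{-}_{\partial D}\right] ds = -\int_{\partial D}\bar v \, K_D^* \rho_u \, ds.
\end{equation*}
Then I would perform the same integration-by-parts computation for $w := S_D(K_D^* \rho_u) \in W_3$ in place of $u$. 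Here the harmonicity and Neumann data on $\partial Y$ again hold, and $w$ has jump $K_D^*\rho_u$ across $\partial D$ (no continuity of normal derivative is used, only the jump). The analog of the calculation above gives
\begin{equation*}
\langle S_D K_D^* S_D^{-1} u, v \rangle = \int_Y \nabla w \cdot \nabla \bar v \, dx = -\int_{\partial D} \bar v \left[\tfrac{\partial w}{\partial n}\big|^{+} - \tfrac{\partial w}{\partial n}\big|^{-}\right] ds = -\int_{\partial D}\bar v \, K_D^* \rho_u \, ds,
\end{equation*}
matching the previous expression for all $v \in W_3$, which establishes $T = S_D K_D^* S_D^{-1}$.

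Self-adjointness is immediate from \eqref{threefourteen}: the right-hand side is Hermitian symmetric in $(u,v)$ since the integrands are real quadratic forms in the gradients. For compactness, I would chain the mapping properties already assembled in the paper: $S_D^{-1} = S_{\partial D}^{-1} G$ is bounded from $W_3$ to $H_0^{-1/2}(\partial D)$, $K_D^*$ is compact on $H_0^{-1/2}(\partial D)$ (as noted in the text, via the Plemelj symmetry \eqref{Plemelj} and the continuity of the kernel of order $d-2$), and $S_D: H_0^{-1/2}(\partial D) \to W_3$ is bounded. The composition of a bounded operator with a compact operator with a bounded operator is compact. The main subtlety I anticipate is bookkeeping for the normal orientation (the outward normal to $Y\setminus D$ on $\partial D$ is $-n$) and making sure the correct traces $|^\pm$ are paired with the correct region in the two integration-by-parts steps; once the signs are aligned, the identity falls out cleanly from the $\pm\frac{1}{2}\rho + K_D^*\rho$ trace formula.
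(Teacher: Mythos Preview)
Your proposal is correct and follows essentially the same route as the paper: both arguments reduce each side to the boundary integral $-\int_{\partial D}\bar v\,K_D^*\rho_u\,ds$ via integration by parts, harmonicity in $D$ and $Y\setminus D$, and the trace formula $\partial_n S_D\rho|^{\pm}=\pm\tfrac12\rho+K_D^*\rho$, and compactness is drawn from the same mapping properties of $S_D$ and $K_D^*$. The only cosmetic difference is the order---the paper starts from $\langle S_D K_D^* S_D^{-1}u,v\rangle$ and works back to the form defining $T$, whereas you compute both sides separately and match them.
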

\begin{proof}
For $u,v \in W_3$, consider
\begin{equation}
\langle S_D K_D^*S_D^{-1}u, v \rangle = \int_Y \nabla [S_D K_D^*S_D^{-1}u] \cdot \nabla \bar{v}.
\end{equation}
Since $\Delta S_D\rho = 0$ in $D$ and $Y\setminus D$ for any $\rho \in H_0^{-1/2}(\partial D)$, an integration by parts yields
$$\langle S_D K_D^*S_D^{-1}u, v \rangle = \int_{\partial D} \bar{v} ( \frac{\partial [S_D {K}_D^*S_D^{-1}u]}{\partial \nu}\mid^-_{\partial D} -  \frac{\partial [S_D K_D^*S_D^{-1}u]}{\partial \nu}\mid^+_{\partial D})d\sigma.$$
Applying the jump conditions from \eqref{singlelayer} yields
\begin{equation}
\langle S_D K_D^*S_D^{-1}u, v \rangle = -\int_{\partial D} K_D^*S_D^{-1}u\bar{v}d\sigma.
\label{Tboundary}
\end{equation}
Note that by the same jump conditions
\begin{equation}
\label{samejump}
K_D^*S_D^{-1}u = \frac{1}{2}(\frac{\partial u}{\partial \nu}|^-_{\partial D} + \frac{\partial u}{\partial \nu}|^+_{\partial D}).
\end{equation}
Application of \eqref{samejump} to equation \eqref{Tboundary} and an integration by parts yields the desired result. Compactness follows directly from the properties of 
$S_D$ and $K^\ast$.
\end{proof}

Rearranging terms in the weak formulation of \eqref{sourcefree} and writing $\mu=1/2-\lambda$ delivers the equivalent eigenvalue problem for Neumann electrostatic resonances:

$$\langle Tu,v \rangle = \mu\langle u,v \rangle\text{,  } u,v \in W_3.$$
Since $T$ is compact and self adjoint on $W_3$, there exists a countable subset $\{ \mu_i \}_{i\in \mathbb{N}}$ of the real line with a single accumulation point at $0$ and an associated family of orthogonal finite-dimensional projections $\{ P_{\mu_i}\}_{i\in \mathbb{N}}$ such that
$$\langle \sum_{i=1}^\infty P_{\mu_i}u, v \rangle = \langle u,v \rangle \text{,       } u,v \in W_3$$
and
$$\langle \sum_{i=1}^\infty \mu_iP_{\mu_i}u, v \rangle = \langle Tu,v \rangle \text{,       } u,v \in W_3.$$
Moreover, it is clear by \eqref{threefourteen} that 
$$-\frac{1}{2} \leq \mu_i \leq \frac{1}{2}.$$
The upper bound $1/2$ is the eigenvalue associated with the eigenfunction $\Pi\in W_3$ such that $\Pi=1$ in $D$.  It is also easy to show that there are no nonzero elements of $W_3$ that are eigenfunctions associated with eigenvalue $\mu=-1/2$.  In section \ref{radiusgeneralshape} an explicit lower bound $\mu^-$ is identified such that the inequality $-1/2<\mu^-\leq \mu_i$,   holds uniformly for a very broad class of geometries.
\begin{lemma}
The eigenvalues $\{\mu_i\}_{i\in\mathbb{N}}$ of $T$ are precisely the eigenvalues of the Neumann-Poincar\'e operator  ${K}^*$ associated with quasi-periodic double layer potential restricted to $\partial D$. \end{lemma}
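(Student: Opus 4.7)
The plan is to read the lemma directly off the operator identity $T = S_D K_D^{*} S_D^{-1}$ established in the preceding theorem. The lemma just before that theorem shows that $S_D$ is a bounded isomorphism from $H_0^{-1/2}(\partial D)$ onto $W_3$, with bounded inverse $S_D^{-1} = S_{\partial D}^{-1} G$. Consequently $T$ on $W_3$ and $K_D^{*}$ on $H_0^{-1/2}(\partial D)$ are similar operators, and similar operators share the same point spectrum with matching geometric multiplicities. Since both are compact and self-adjoint (the latter with respect to a suitable inner product coming from the Plemelj symmetry $K_D S_{\partial D} = S_{\partial D} K_D^{*}$ and the positivity/selfadjointness of $-S_{\partial D}$ recorded after \eqref{Plemelj}), this is the whole content of the lemma.

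To implement this, I would spell out the bijection between eigenpairs. Given $\mu \in \mathbb{C}$, suppose $K_D^{*} \rho = \mu \rho$ with $0 \neq \rho \in H_0^{-1/2}(\partial D)$; applying $S_D$ to both sides and using $T S_D = S_D K_D^{*}$ (a rewriting of the previous theorem) gives $T(S_D \rho) = \mu (S_D \rho)$, and $u := S_D \rho$ is a nonzero element of $W_3$ because $S_D$ is injective. Conversely, if $T u = \mu u$ with $0 \neq u \in W_3$, set $\rho := S_D^{-1} u \in H_0^{-1/2}(\partial D) \setminus \{0\}$; substituting back gives $K_D^{*} \rho = S_D^{-1} T u = \mu \rho$. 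Thus $\mu$ is an eigenvalue of $T$ if and only if it is an eigenvalue of $K_D^{*}$ restricted to $H_0^{-1/2}(\partial D)$, and the eigenspaces correspond bijectively through $S_D$.

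There is no real obstacle here; the genuine analytic work was already absorbed into the proof of the previous theorem and into the invertibility of $S_D$. The one point that warrants a line of explanation is the choice of underlying space: the correspondence holds specifically for $K_D^{*}$ acting on mean-zero densities $H_0^{-1/2}(\partial D)$, which is the space on which $S_D$ is an isomorphism onto $W_3$ and on which $K_D^{*}$ is compact. The paragraph immediately above the lemma, noting that $\mu = 1/2$ is attained on $W_3$ by the eigenfunction $\Pi$ and that $\mu = -1/2$ is never attained, dovetails with the corresponding statements for $K_D^{*}|_{H_0^{-1/2}(\partial D)}$ and so no additional spectral values need to be accounted for or discarded.
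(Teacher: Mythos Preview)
Your proof is correct and follows essentially the same route as the paper: both read the lemma off the similarity $T = S_D K_D^{*} S_D^{-1}$ together with the invertibility of $S_D$, pushing eigenvectors back and forth via $S_D$ and $S_D^{-1}$. Your version is slightly more explicit about the framework (similar operators share point spectra, the role of $H_0^{-1/2}(\partial D)$), but the argument is the same.
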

\begin{proof}
If a pair $(\mu, u)$ belonging to $(-1/2, 1/2] \times W_3$ satisfies $Tu = \mu u$ then
$S_D K_D^\ast S_D^{-1}u= \mu u$. Multiplication of both sides by $S_D^{-1}$ shows that $S_D^{-1}u$ is an eigenfunction for  
function for $K_D^\ast$  associated with $\mu$. Suppose the pair $(\mu,w)$ belongs to $(-1/2,1/2]\times
H^{-1/2}(\partial D)$ and satisfies $K_D^\ast w=\mu w$. Since the trace map from $W_3$ to $H_0^{1/2}(\partial D)$ is onto then there is a  $u$ in $W_3$ for which $w=S_D^{-1}u$ and  $K_D^\ast S_D^{-1}u=\mu S_D^{-1}u.$
Multiplication of this identity by $S_D$ shows that $u$ is an eigenfunction for $T$ associated with $\mu$.
\end{proof}

Finally, we see that if $u_1\in W_1$ and $u_2\in W_2$, then  

$$\langle Tu_1,v \rangle = \frac{1}{2}\langle u_1,v \rangle \text{,}$$
$$\langle Tu_2, v\rangle = -\frac{1}{2}\langle u_2,v\rangle $$
for all $v\in H^1_{\#}(\alpha,Y)$.

Let $Q_1, Q_2$ be the orthogonal projections of $\mathcal{H}$ onto $W_1$ and $W_2$ respectively, and define $P_1 := Q_1 + P_{1/2} \text{,  } P_2:= Q_2 $.  Here $P_{1/2}$ is the projection onto the one dimensional subspace spanned by the function $\Pi\in W_3$. Then $\{ P_1, P_2\} \cup \{ P_{\mu_i}\}_{-\frac{1}{2} < \mu_i < \frac{1}{2}}$ is an orthogonal family of projections, and
$$\langle P_1u + P_2u + \sum_{-\frac{1}{2} < \mu_i < \frac{1}{2}} P_{\mu_i}u, v\rangle = \langle u,v\rangle$$
for all $u,v \in \mathcal{H}$.

We now recover the spectral decomposition for $T_k$ associated with the sesqualinear form \eqref{Threetwo}.
\begin{theorem}
\label{T_z spectrum}
The linear operator $T_k: \mathcal{H}\longrightarrow \mathcal{H}$ associated with the sesqualinear form $B_k$ is  is given by
$$\langle T_k u,v \rangle = \langle k P_1u + P_2u + \sum_{-\frac{1}{2} < \mu_i < \frac{1}{2}}[k(1/2 + \mu_i) + (1/2-\mu_i)] P_{\mu_i}u, v\rangle$$
for all $u,v\in \mathcal{H}$.
\end{theorem}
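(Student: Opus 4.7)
The plan is to express $B_k$ as a linear combination of the inner product $\langle\cdot,\cdot\rangle$ and the bilinear form for $T$, and then invoke the already established spectral decomposition of $T$ on each of the invariant subspaces $W_1$, $W_2$, $W_3$.

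First, I would observe the two algebraic identities
\begin{equation*}
\langle u,v\rangle = \int_{Y\setminus D}\nabla u\cdot\nabla\bar v\,dx + \int_{D}\nabla u\cdot\nabla\bar v\,dx,
\qquad
2\langle Tu,v\rangle = \int_{Y\setminus D}\nabla u\cdot\nabla\bar v\,dx - \int_{D}\nabla u\cdot\nabla\bar v\,dx,
\end{equation*}
where the second one is the definition \eqref{threefourteen} extended to all of $\mathcal{H}$ using the fact (noted just before Theorem \ref{T_z spectrum}) that $T$ acts as $\tfrac12 I$ on $W_1$ and as $-\tfrac12 I$ on $W_2$. Adding and subtracting yields $\int_{Y\setminus D}=\tfrac12\langle u,v\rangle+\langle Tu,v\rangle$ and $\int_{D}=\tfrac12\langle u,v\rangle-\langle Tu,v\rangle$, so that
\begin{equation*}
B_k(u,v) \;=\; \tfrac{k+1}{2}\,\langle u,v\rangle \;+\; (k-1)\,\langle Tu,v\rangle.
\end{equation*}

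Next I would insert the spectral resolution of $T$. On $W_1$ and $W_2$ the operator acts as $\pm\tfrac12 I$, while on $W_3$ the compact self-adjoint operator $T$ has the orthogonal expansion $T = \sum_{i}\mu_i P_{\mu_i}$ with $\mu_i\in(-\tfrac12,\tfrac12]$ and the value $\mu=1/2$ attained on the one-dimensional eigenspace spanned by $\Pi$. Combining the $Q_1$ part of the identity with $P_{1/2}$ into $P_1=Q_1+P_{1/2}$ and keeping $P_2=Q_2$, the completeness relation
\begin{equation*}
\langle u,v\rangle = \langle P_1u+P_2u+\sum_{-1/2<\mu_i<1/2}P_{\mu_i}u,\,v\rangle
\end{equation*}
together with the corresponding formula for $\langle Tu,v\rangle$ in which $P_1$ carries the coefficient $+1/2$ and $P_2$ carries $-1/2$ gives, after substitution into the displayed expression for $B_k$, the coefficient $\tfrac{k+1}{2}+\tfrac{k-1}{2}=k$ in front of $P_1$ and $\tfrac{k+1}{2}-\tfrac{k-1}{2}=1$ in front of $P_2$.

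Finally, for each $\mu_i\in(-1/2,1/2)$ the coefficient becomes
\begin{equation*}
\tfrac{k+1}{2} + (k-1)\mu_i \;=\; k\bigl(\tfrac12+\mu_i\bigr) + \bigl(\tfrac12-\mu_i\bigr),
\end{equation*}
which is exactly the factor appearing in the theorem. Since $\langle T_ku,v\rangle=B_k(u,v)$ by definition, the stated identity follows for all $u,v\in\mathcal{H}$. The only real point that requires care, and which I would treat as the main (though mild) obstacle, is justifying the extension of $T$ from $W_3$ to the whole of $\mathcal{H}$ by its action on $W_1$ and $W_2$; this uses that functions in $W_2$ are constant (and hence gradient-free) on $Y\setminus D$ and that $W_1$ functions are supported away from $D$, which makes the two integrals in \eqref{threefourteen} degenerate consistently with the eigenvalues $\pm 1/2$.
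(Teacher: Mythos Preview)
Your proof is correct. The route differs slightly from the paper's: the paper decomposes $u$ first and computes $B_k(P_{\mu_i}u,v)$ on each eigenspace separately, using the eigenvalue relation to rewrite $\int_{Y\setminus D}\nabla P_{\mu_i}u\cdot\nabla\bar v$ as $\frac{1/2+\mu_i}{1/2-\mu_i}\int_D\nabla P_{\mu_i}u\cdot\nabla\bar v$ and then recombining. You instead establish the global operator identity $T_k=\tfrac{k+1}{2}I+(k-1)T$ first and only then insert the spectral resolution of $T$. Both arguments rest on the same orthogonal decomposition and the same action of $T$ on $W_1$, $W_2$, and each $P_{\mu_i}\mathcal{H}$; your packaging is a bit more streamlined because it avoids the intermediate ratio computation, while the paper's version makes the role of each eigenspace more explicit. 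Your remark about extending $T$ from $W_3$ to all of $\mathcal{H}$ via the same sesquilinear form is exactly the point the paper handles implicitly when it records $\langle Tu_1,v\rangle=\tfrac12\langle u_1,v\rangle$ and $\langle Tu_2,v\rangle=-\tfrac12\langle u_2,v\rangle$ just before the theorem.
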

\begin{proof}
For $u, v\in \mathcal{H}$ we have
$$B_k(P_{\mu_i}u, v) = k\int_{Y \setminus D} \nabla P_{\mu_i}u \cdot \nabla \bar{v} +  \int_{D} \nabla P_{\mu_i}u \cdot \nabla \bar{v}.$$
Since $P_{\mu_i}u$ is an eigenvector corresponding to $\mu_i \neq \pm \frac{1}{2}$, we have
$$\int_{Y \setminus D} \nabla P_{\mu_i}u \cdot \nabla \bar{v} = \frac{(1/2+\mu_i)}{(1/2-\mu_i)}\int_{D} \nabla P_{\mu_i}u \cdot \nabla \bar{v}$$
and so we calculate
$$B_k(P_{\mu_i}u, v) = [k\frac{(1/2+\mu_i)}{(1/2-\mu_i)} +1]\int_{D} \nabla P_{\mu_i}u \cdot \nabla \bar{v}.$$
But we also know that
$$\int_D \nabla P_{\mu_i} u \cdot \nabla \bar{v} = (1/2-\mu_i)\int_Y \nabla P_{\mu_i} u \cdot \nabla \bar{v}$$
and so
$$B_k(P_{\mu_i}u, v) = [k(1/2 + \mu_i) + (1/2-\mu_i)] \int _Y \nabla P_{\mu_i} u \cdot \nabla \bar{v}.$$
Since we clearly have
$$B_k(P_1u, v) = k \int_{Y \setminus D} \nabla P_1u \cdot \nabla \bar{v}\text{,}$$
$$B_k(P_2u, v) = \int_{D} \nabla P_2u \cdot \nabla \bar{v}\text{,}$$
and the projections $P_1, P_2, P_{\mu_i}$ are mutually orthogonal for all $-\frac{1}{2} < \mu_i < \frac{1}{2}$, the proof is complete.
\end{proof}

It is evident that $T_k$ is invertible whenever 
\begin{eqnarray} k \in \mathbb{C}\setminus Z \hbox{ where } Z=\{ \frac{\mu_i - 1/2}{\mu_i + 1/2} \}_{\{-\frac{1}{2} \leq \mu_i \leq \frac{1}{2}\}}
\label{invertability}
\end{eqnarray}
 and for $z=k^{-1}$,
\begin{eqnarray}
(T_k)^{-1}= z P_1u + P_2u + \sum_{-\frac{1}{2} < \mu_i < \frac{1}{2}}z[(1/2 + \mu_i) + z(1/2-\mu_i)] ^{-1}P_{\mu_i}.
\label{inverse}
\end{eqnarray}
For future reference we also introduce the set $S$ of $z\in\mathbb{C}$ for which $T_k$ is not invertible given by
\begin{eqnarray} 
S=\{ \frac{\mu_i + 1/2}{\mu_i - 1/2} \}_{\{-\frac{1}{2} < \mu_i < \frac{1}{2}\}}
\label{noninvertability}
\end{eqnarray}
which also lies on the negative real axis. In section \ref{radiusgeneralshape} we will provide explicit upper bounds on the set $S$ that depend upon the geometry of the inclusions.

\section{Neumann spectrum for Complex Coupling Constant }
\label{bandstructure}

We set $\omega^2=\nu(k)$ in  \eqref{Eigen1neumann} and extend the Neumann eigenvalue problem to complex coefficients $k$ outside the set $Z$ given by \eqref{invertability}. The Neumann spectral problem \eqref{Eigen1neumann}is written as

\begin{eqnarray}
 -\nabla\cdot (k (1-\chi_{D}) + \chi_{D})\nabla u=-\Delta_N T_k u=\nu(k) u,
 \label{representationform}
 \end{eqnarray}
with $u\in\mathcal{H}$. Here $-\Delta_N$ is the Laplace operator associated with the bilinear form $\langle\cdot,\cdot\rangle$ defined on $\mathcal{H}$.
We characterize the Bloch spectra by analyzing the operator 
\begin{eqnarray}
B(k)=T_k^{-1}(-\Delta_N^{-1}),
\label{inverseB}
\end{eqnarray}
where the operator $-\Delta_N^{-1}$ exists by the Lax Milgram Lemma and has an explicit spectral representation in terms of the Neumann eigenfunctions for the Laplacian on the unit period cell.

The operator $B(k): L^2_{0}(Y) \longrightarrow \mathcal{H}$ is easily seen to be bounded for $k\not\in Z$, see Theorem \ref{bounded}. Since $\mathcal{H}\subset H^1(Y)$ embeds compactly into $L^2_{0}(Y)$ we find by virtue of Poincare's inequality that $B(k)$ is a bounded compact linear operator on $L^2_0(Y)$ and therefore has a discrete spectrum $\{ \gamma_i(k) \}_{i \in \mathbb{N}}$ with a possible accumulation point at $0$, see Remark \ref{compact2}. The corresponding  eigenspaces are finite dimensional and the eigenfunctions $p_i\in L^2_{0}(Y)$ satisfy
\begin{eqnarray}
B(k)p_i(x)=\gamma_i(k,\alpha)p_i(x)\hbox{ for $x$ in $Y$}
\label{compactproblem}
\end{eqnarray}
and also belong to $\mathcal{H}$. Note further for $\gamma_i\not =0$ that \eqref{compactproblem} holds if and only if \eqref{representationform} holds with $\nu_i(k)=\gamma_i^{-1}(k)$, and $-\Delta_N T_k u_i=\nu_i(k) u_i.$
Collecting results we have the following theorem
\begin{theorem}
\label{extension}
Let $Z$ denote the set of points on the negative real axis defined by \eqref{invertability}. Then the Neumann eigenvalue problem \eqref{Eigen1neumann}  can be extended for values of the coupling constant $k$ off the positive real axis into $\mathbb{C}\setminus Z$, i.e.,  the Neumann eigenvalues are of finite multiplicity and denoted by $\nu_j(k)=\gamma_j^{-1}(k)$, $j\in \mathbb{N}$.
\end{theorem}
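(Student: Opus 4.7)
The plan is to reformulate the extended Neumann eigenvalue problem \eqref{representationform} as a compact operator spectral problem on $L^2_0(Y)$ through the operator $B(k)=T_k^{-1}(-\Delta_N^{-1})$ introduced in \eqref{inverseB}, and then invoke the Riesz--Schauder theorem. Because $k$ is complex, $B(k)$ is not self-adjoint, so I would rely on the version of Riesz--Schauder that only requires compactness.

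First, I would fix $k\in\mathbb{C}\setminus Z$ and verify that $T_k^{-1}\colon\mathcal{H}\to\mathcal{H}$ is bounded by inspecting the explicit spectral representation \eqref{inverse} furnished by Theorem~\ref{T_z spectrum}. The condition $k\notin Z$ forces $z=k^{-1}$ to avoid the pole set $S$ of \eqref{noninvertability}, and because the electrostatic resonance eigenvalues $\mu_i$ accumulate only at $0$, the denominators $(1/2+\mu_i)+z(1/2-\mu_i)$ are uniformly bounded away from $0$ in $i$, so the series \eqref{inverse} converges in operator norm (this is the content of the forward-referenced Theorem~\ref{bounded}). Separately, $-\Delta_N^{-1}\colon L^2_0(Y)\to\mathcal{H}$ is bounded by Lax--Milgram applied to the form \eqref{innerproduct}, whose coercivity is precisely the Poincar\'e inequality on the zero-mean subspace. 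Composing gives $B(k)\colon L^2_0(Y)\to\mathcal{H}$ bounded, and post-composing with the Rellich--Kondrachov embedding $\mathcal{H}\hookrightarrow L^2_0(Y)$ shows $B(k)$ is compact on $L^2_0(Y)$.

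Next, Riesz--Schauder yields a spectrum of $B(k)$ consisting of at most countably many eigenvalues $\{\gamma_j(k)\}_{j\in\mathbb{N}}$ of finite multiplicity whose only possible accumulation point is $0$. To identify these with the Neumann eigenvalues I would verify the equivalence: if $B(k)p=\gamma p$ with $\gamma\ne 0$, then $p=\gamma T_k^{-1}(-\Delta_N^{-1})p$ automatically lies in $\mathcal{H}$, and applying $\gamma^{-1}(-\Delta_N)T_k$ to both sides gives $-\Delta_N T_k p=\gamma^{-1}p$, which is \eqref{representationform} with $\nu(k)=\gamma^{-1}(k)$. The converse direction reverses the same computation, producing a bijection between the nonzero spectrum of $B(k)$ and the Neumann spectrum, and transferring finite multiplicity from one to the other.

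The step I expect to be most delicate is the boundedness of $T_k^{-1}$ away from $Z$: ellipticity of $T_k$ is lost for non-real $k$, so there is no energy estimate to fall back on, and the entire argument rests on the orthogonal decomposition $\mathcal{H}=W_1\oplus W_2\oplus W_3$ and the spectral resolution provided by Theorem~\ref{T_z spectrum}, together with a quantitative lower bound on $\mathrm{dist}(z,S)$. Once this operator-norm estimate for $T_k^{-1}$ is secured, the remainder is a clean application of Rellich compactness and the non-self-adjoint Riesz--Schauder theorem.
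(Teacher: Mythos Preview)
Your proposal is correct and follows essentially the same route as the paper: reformulate the problem via $B(k)=T_k^{-1}(-\Delta_N^{-1})$, use the spectral representation of $T_k$ together with Lax--Milgram for $-\Delta_N^{-1}$ to get boundedness $L^2_0(Y)\to\mathcal{H}$, invoke the compact embedding $\mathcal{H}\hookrightarrow L^2_0(Y)$ for compactness, and then read off the discrete spectrum of finite multiplicity with the identification $\nu_j(k)=\gamma_j(k)^{-1}$. The paper packages the boundedness step into the forward-referenced Theorem~\ref{bounded} and Remark~\ref{compact2}, but the content is exactly what you outline.
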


\section{Series Representation of Neumann Eigenvalues}
\label{asymptotic}
In what follows we set $\gamma=\nu^{-1}(k)$ and analyze the spectral problem
\begin{equation}
B(k) u = \gamma (k) u
\label{forpointtwo}
\end{equation}
Henceforth we will analyze the high contrast limit by  by developing a power series in $z=\frac{1}{k}$ about $z=0$ for the spectrum of the family of operators associated with \eqref{forpointtwo}. 
$$\begin{array}{lcl}
B(k) & := & T_k^{-1}(-\Delta_{N})^{-1}\\
& = &  (zP_1 + P_2 + z\sum_{-\frac{1}{2} < \mu_i < \frac{1}{2}}[(1/2 + \mu_i) + z(1/2-\mu_i)]^{-1} P_{\mu_i})(-\Delta_{N})^{-1}\\
&=& A(z).
\end{array}$$
Here we define the operator $A(z)$ such that $A(1/k)=B(k)$ and the associated eigenvalues $\beta_j(1/k)=\gamma_j(k)$ and the spectral problem is $A(z)u=\beta(z)u$ for $u\in L^2_{0}(Y)$. The Neumann eigenvalues are of finite multiplicity and described in terms  of $\beta_j(z)\in\sigma(A(z))$ by 
\begin{equation}
\nu_j(k)=\frac{1}{\beta_j(\frac{1}{k})}, \hbox{   $j\in \mathbb{N}.$}
\label{relationbeteigenvals}
\end{equation}

It is readily seen from the above representation that  $A^{\alpha}(z)$ is self-adjoint for $k\in\mathbb{R}$ and is a family of bounded operators taking $L^2_{0}(Y)$ into itself see, Theorem \ref{Selfadjoint} and subsequent remarks.  We have the following:
\begin{lemma}
\label{inverseoperator}
$A(z)$ is holomorphic on $\Omega_0 := \mathbb{C} \setminus S$. 
Where $S=\cup_{i\in\mathbb{N}} z_i$ is the collection of points $z_i=(\mu_i+1/2)/(\mu_i-1/2)$ on the negative real axis associated with
the eigenvalues $\{\mu_i\}_{i\in\mathbb{N}}$. The set $S$ consists of poles  of $A(z)$ with only one accumulation point at $z=-1$.
\end{lemma}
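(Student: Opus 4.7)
The plan is to factor $A(z) = M(z)\,(-\Delta_N)^{-1}$, where the $z$-independent operator $(-\Delta_N)^{-1}$ is bounded by Lax--Milgram, so that it suffices to analyze
\begin{equation*}
M(z) := zP_1 + P_2 + \sum_{-\tfrac12 < \mu_i < \tfrac12} c_i(z)\, P_{\mu_i}, \qquad c_i(z) := \frac{z}{(1/2+\mu_i) + z(1/2-\mu_i)},
\end{equation*}
as an $\mathcal{H}\to\mathcal{H}$ holomorphic operator-valued function. Each scalar $c_i$ is a rational function with a single simple pole precisely at $z_i = (\mu_i+1/2)/(\mu_i-1/2)$; since $T$ is compact self-adjoint we have $\mu_i \to 0$, which forces $z_i \to (1/2)/(-1/2) = -1$, identifying both the candidate pole set $S = \{z_i\}$ and its accumulation point.

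The key quantitative step is to exploit mutual orthogonality of the spectral projections $\{P_1, P_2, P_{\mu_i}\}$. For the truncation $M_N(z) := zP_1 + P_2 + \sum_{i\le N} c_i(z) P_{\mu_i}$, the tail is a direct sum of scalings on orthogonal subspaces, so
\begin{equation*}
\|M(z) - M_N(z)\|_{\mathrm{op}} \;=\; \sup_{i>N} |c_i(z)|.
\end{equation*}
Fix a compact set $K \subset \mathbb{C}\setminus(S\cup\{-1\})$ and set $\eta := \mathrm{dist}(K,-1) > 0$. For $i$ large enough that $|\mu_i|$ is sufficiently small, the denominator $(1/2+\mu_i)+z(1/2-\mu_i)$ lies within $|\mu_i|(1+|z|)$ of $(1+z)/2$, hence is bounded below on $K$ uniformly in $i$; for the finitely many remaining indices $i \le N_0$, the distance from $K$ to each $z_i$ is positive by compactness of $K$. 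Combining these bounds gives $\sup_{i>N}|c_i(z)| \to 0$ uniformly on $K$. Since each $M_N$ is a finite sum of $z$-rational multiples of fixed bounded operators, hence holomorphic on $K$, the uniform operator-norm limit $M$ is holomorphic on $K$, and thus on all of $\mathbb{C}\setminus(S\cup\{-1\})$ (which is the interior of the stated $\Omega_0$).

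Finally, to verify that each $z_i$ really is a pole: near $z_i$ only the $i$-th summand is singular, with residue $z_i(1/2-\mu_i)^{-1} P_{\mu_i}$; since $P_{\mu_i}$ is a nonzero finite-dimensional orthogonal projection, this residue is a nonzero bounded operator, so $z_i$ is a genuine simple pole of $M$, and likewise of $A(z)=M(z)(-\Delta_N)^{-1}$ after composition with the injective operator $(-\Delta_N)^{-1}$. I expect the main obstacle to be precisely the uniform tail estimate near $-1$: orthogonality of $\{P_{\mu_i}\}$ is essential, because it turns the operator-norm bound into an $\ell^\infty$ supremum rather than an $\ell^1$ sum of $\|c_i(z)P_{\mu_i}\|$; the latter would diverge as $z_i \to -1$ and destroy the convergence argument.
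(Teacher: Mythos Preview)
Your overall strategy is sound, and in fact the paper gives no detailed proof of this lemma, treating it as immediate from the spectral representation of $T_k^{-1}$. However, there is a genuine error in your tail estimate. You assert that $\sup_{i>N}|c_i(z)|\to 0$ as $N\to\infty$, but this is false: since $\mu_i\to 0$,
\[
c_i(z)=\frac{z}{(1/2+\mu_i)+z(1/2-\mu_i)}\;\longrightarrow\;\frac{2z}{1+z},
\]
which is nonzero for $z\neq 0$. Hence $\|M(z)-M_N(z)\|_{\mathrm{op}}=\sup_{i>N}|c_i(z)|$ does \emph{not} tend to zero; the truncations $M_N(z)$ converge to $M(z)$ only strongly, not in operator norm, and your Weierstrass argument for holomorphy collapses.

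Two easy repairs are available. First, split off the limit: writing $c_i(z)=\tfrac{2z}{1+z}+r_i(z)$ with $r_i(z)=\dfrac{z\,\mu_i(z-1)}{[(1/2+\mu_i)+z(1/2-\mu_i)]\,(1+z)/2}$, one has $|r_i(z)|\le C_K|\mu_i|\to 0$ uniformly on compact $K\subset\mathbb{C}\setminus(S\cup\{-1\})$, so the tail of $\sum_i r_i(z)P_{\mu_i}$ really vanishes in norm and your argument goes through for the decomposition $\tfrac{2z}{1+z}\sum_iP_{\mu_i}+\sum_ir_i(z)P_{\mu_i}$. Second, and more cheaply, invoke the equivalence of weak and norm holomorphy: for fixed $u,v$ the scalar series $\sum_ic_i(z)\langle P_{\mu_i}u,v\rangle$ converges absolutely and locally uniformly (since $\sum_i|\langle P_{\mu_i}u,v\rangle|\le\|u\|\,\|v\|$ and the $|c_i(z)|$ are uniformly bounded on $K$, which you did establish correctly), so $z\mapsto\langle M(z)u,v\rangle$ is holomorphic; together with the local bound on $\|M(z)\|$ this gives norm holomorphy. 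One further small correction: in the pole verification, the property of $(-\Delta_N)^{-1}$ you need is not injectivity but density of its range in $\mathcal{H}$, which is what ensures $P_{\mu_i}(-\Delta_N)^{-1}\neq 0$.
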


In the sections \ref{radiusgeneralshape}   and \ref{radiusmultiplescatterers} we identify explicit lower bounds  $-1/2<\mu^\ast\leq \min_i\{\mu_i\}$, that hold for generic classes of inclusion domains $D$. The corresponding upper bound $z^\ast$ on $S$ is written
\begin{eqnarray}
 \max_i \{z_i\}\leq\frac{\mu^\ast+1/2}{\mu^\ast-1/2}= z^\ast<0.
\label{bdsonS}
\end{eqnarray}

Let $\beta_0 \in \sigma(A(0))$ with spectral projection $P(0)$, and let $\Gamma$ be a closed contour in $\mathbb{C}$ enclosing $\beta_0$ but no other $\beta_0 \in \sigma(A(0))$.
The spectral projection associated with $\beta(z) \in \sigma(A(z))$ for $\beta(z) \in \text{int}(\Gamma)$ is denoted by $P(z)$. Here we suppose all elements $\beta(z) \in \text{int}(\Gamma)$ converge to $\beta_0$ as $z\rightarrow0$. This collection is known as the eigenvalue group associated with $\beta_0$. We write $M(z) = P(z)L^2_{0}(Y)$ and suppose for the moment that $\Gamma$ lies in the resolvent of $A(z)$ and $\text{dim}(M(0)) =\text{dim}(M(z))= m$, noting that  Theorems \ref{separationandraduus-alphanotzero}, \ref{separationandraduus-alphazero} and \ref{separationandraduus-Neumann}  provide explicit conditions for when this holds true.  
Since $A(z)$ is analytic in a neighborhood of the origin we  write
\begin{equation}
A(z) = A(0) + \sum_{n=1}^{\infty} z^nA_n.
\end{equation}
Define the resolvent of $A(z)$ by
$$R(\zeta, z) = (A(z) - \zeta )^{-1}\text{,}$$
and expanding successively in Neumann series and power series we have the identity

\begin{equation}
\begin{array}{lcl}
R(\zeta,z) & = & R(\zeta,0)[I + (A(z) - A(0))R(\zeta,0)]^{-1} \\
\\
& = & R(\zeta,0)+\sum_{p=1}^\infty [-(A(z) - A(0))R(\zeta,0)]^p\\
\\
& = & R(\zeta,0) + \sum_{n=1}^{\infty} z^nR_n(\zeta)\text{,}
\end{array}
\label{foursix}
\end{equation}

where
$$R_n(\zeta) = \sum_{k_1 + \ldots k_p = n, k_j \geq 1} (-1)^pR(\zeta,0)A_{k_1}R(\zeta,0)A_{k_2}\ldots R(\zeta,0)A_{k_p}$$
for $n\geq 1$.

Application of  the contour integral formula for spectral projections \cite{SzNagy}, \cite{TKato1}, \cite{TKato2} delivers the expansion for the spectral projection
\begin{equation}
\begin{array}{lcl}
P(z) & = & -\frac{1}{2\pi i} \oint_{\Gamma} R(\zeta, z)d\zeta\\
\\
& = & P(0) + \sum_{n=1}^\infty z^n P_n
\end{array}
\label{Project1}
\end{equation}
where $P_n =  -\frac{1}{2\pi i} \oint_{\Gamma} R_n(\zeta)d\zeta$.  Now we represent  the  difference $(A(z) - \beta_0)P(z)$ as power series. Start with
\begin{eqnarray}
(A(z) - \beta_0)R(\zeta,z) = I + (\zeta - \beta_0)R(\zeta,z)
\label{project2}
\end{eqnarray}
and  we have 
\begin{equation}
(A(z) - \beta_0)P(z) = - \frac{1}{2 \pi i}\oint_{\Gamma} (\zeta - \beta_0)R(\zeta,z)d\zeta \text{,}
\label{cauchyformula}
\end{equation}
and the power series representation for the operator $A(z)$ follows from \eqref{foursix}.

Next we develop a series representation for the eigenvalues of $A(z)$ for real $z=1/k$. We observe that the operator $A(z)$ is selfadjoint and bounded for $z\in\mathbb{R}\not\in S$ see, Theorem \ref{Selfadjoint}.
So for $z\in \mathbb{R}\setminus S$ there is a complete orthonormal system of eigenfunctions $\{\varphi_i(z)\}_{i\in\mathbb{N}}$ in $L^2_0(Y)$ and eigenvalues $\{\beta_i(z)\}_{i\in\mathbb{N}}$ such that 
\begin{equation}
\label{complete}
\beta_i(z)=(A(z)\varphi_i(z),\varphi_i(z)), \hbox{   for   $z\in \mathbb{R}\setminus S$}.
\end{equation}
Now for $\beta_0$ such that $\beta_0=(A(0)\varphi_i(0),\varphi_i(0))$ and $\beta_i(z)$ associated with the eigenvalue group corresponding to $\beta_0$ inside $\Gamma$ we have $P(z)\varphi_i(z)=\varphi_i(z)$ and from \eqref{cauchyformula}
\begin{eqnarray}
&&\beta_i(z) - \beta_0 =((A(z) - \beta_0)P(z)\varphi_i(z),\varphi_i(z))\nonumber\\
&&=- (\frac{1}{2 \pi i}\oint_{\Gamma} (\zeta - \beta_0)R(\zeta,z)d\zeta\varphi_i(z),\varphi_i(z)).
\label{fourten}
\end{eqnarray}
The eigenfunctions $\varphi_i(z)$ are analytic in $z$ see (\cite{KatoPerturb}, Chapter II \S 4 \& Chapter VII \S 3).

We develop a series for the eigenvalues suitable for the analysis. We apply \eqref{inverse} and write
\begin{equation}
\begin{array}{lcl}
R(\zeta,z) & = & R(\zeta,0)+\sum_{p=1}^\infty [-(A(z) - A(0))R(\zeta,0)]^p\\
\\
& = & R(\zeta,0) + \sum_{n=1}^{\infty} z^n{\mathcal{N}}^n(\zeta,z)\text{,}
\end{array}
\label{foursixalternate}
\end{equation}
where
\begin{equation}
\label{caligraphN}
\mathcal{N}(\zeta,z) =  [(P_1u + \sum_{-\frac{1}{2} < \mu_i < \frac{1}{2}}[(1/2 + \mu_i) + z(1/2-\mu_i)] ^{-1}P_{\mu_i})(-\Delta_N)^{-1}R(\zeta,0)]
\end{equation}
for $n\geq 1$.
Equation \eqref{fourten} together with \ref{foursixalternate} deliver a series representation formula for  Neumann eigenvalues $\nu_i(k)=(\beta_i(\frac{1}{k}))^{-1}$ for real $k$ in a neighborhood of $\infty$. Substituting \eqref{foursixalternate} into \eqref{fourten} and manipulation  yields
\begin{equation}
{\beta}_i(z) = \beta_0 + \sum_{n=1}^\infty z^n\beta^n_i(z),
\label{foureleven}
\end{equation}
where
\begin{equation}
{\beta}^n_i(z) = - \frac{1}{2\pi i} \left(\oint_{\Gamma}(\zeta-\beta_0)\mathcal{N}^n(\zeta,z)d\zeta\varphi_i(z),\varphi_i(z)\right);\hbox{ $n\geq 1$}.
\label{fourtwelve}
\end{equation}
for $z$ in an interval containing $z=0$.

\section{Series Representation of Bloch Eigenvalues}
\label{asymptotic6}

We begin with the variational description of the Bloch eigenvalue problem \eqref{Eigen1localBloch}.
Denote the spaces of all $\alpha$ quasi-periodic complex valued functions belonging to $L_{loc}^2(\mathbb{R}^d)$ by $L_{\#}^2(\alpha,Y)$. For $\alpha\not=0$ the eigenfunctions $h$ for  \eqref{Eigen1localBloch} belong to the space
\begin{equation}
H^1_{\#}(\alpha,Y) = \{ h \in  H_{loc}^1(\mathbb{R}^d): \hbox{$h$ is $\alpha$ quasiperiodic} \}.
\label{H1}
\end{equation}
\linebreak
The space  $H^1_{\#}(\alpha,Y)$  is a Hilbert space under the inner product \eqref{innerproduct}.
When $\alpha=0$, the pair $h(x)=1$, $\omega^2=0$ is a solution to \eqref{Eigen1localBloch}. For this case the remaining eigenfunctions associated with nonzero eigenvalues are orthogonal to  $1$ in the $L^2(Y)$ inner product.  These eigenfunctions belong to the  set of square integrable $Y$ periodic functions with zero average denoted by $L^2_{\#}(0,Y)$.  They also belong to the space
\begin{equation}
H^1_{\#}(0,Y) = \{ h \in  H_{loc}^1(\mathbb{R}^d): \hbox{$h$ is periodic, $\int_Y h\,dx=0$} \}.
\label{H1}
\end{equation}
The space  $H^1_{\#}(0,Y)$  is also Hilbert space with  inner product defined by \eqref{innerproduct}.

For any $k \in \mathbb{C}$, the the variational formulation of the Bloch eigenvalue problem \eqref{Eigen1localBloch} for $h$ and $\omega^2$ is given by
\begin{eqnarray}
B_k(h,v)=\omega^2(h,v)
\label{weak}
\end{eqnarray}
for all $v$ in $H^1_{\#}(\alpha,Y)$ and $\alpha\in Y^\ast$. As before it is possible to decompose $H^1_{\#}(\alpha,Y)$ into an orthogonal sum of three subspaces $W_1$, $W_2$, and $W_3$ to recover an analytic representation for the operator $T_k^\alpha$ associated with the bilinear form $B_k(h,v)$ defined on $H^1_{\#}(\alpha,Y)\times H^1_{\#}(\alpha,Y)$.

We first address the case $\alpha\in Y^\star\setminus \{0\}$. Let $W_1 \subset H^1_{\#}(\alpha,Y)$ be the completion in $H^1_{\#}(\alpha,Y)$ of the subspace of functions with support away from $D$, and let $W_2 \subset H^1_{\#}(\alpha,Y)$ be the subspace of functions in $H^1_0(D)$ extended by zero into $Y$.  Clearly $W_1$ and $W_2$ are orthogonal subspaces of $H^1_{\#}(\alpha,Y)$, so define $W_3 := (W_1\oplus W_2)^{\bot}$.  We therefore have 
\begin{equation}
H^1_{\#}(\alpha,Y) = W_1 \oplus W_2 \oplus W_3.
\end{equation}
The orthogonal decomposition and integration by parts shows that elements $u \in W_3$ are harmonic separately in $D$ and $Y\setminus D$.

Now consider $\alpha=0$ and decompose $H^1_\#(0,Y)$. Let $W_1 \subset H^1_{\#}(0,Y)$ be the completion in $H^1_{\#}(0,Y)$ of the subspace of functions with support away from $D$. Here let $\tilde{H}^1_0(D)$ denote the subspace of functions $H^1_0(D)$ extended by zero into $Y\setminus D$ and let $1_Y$ be the indicator function of $Y$. We define $W_2 \subset H^1_{\#}(0,Y)$ to be the subspace of functions given by 
\begin{equation}
W_2 = \{ u = \tilde{u} - \left( \frac{1}{|Y|} \int_D \tilde{u} dx \right) 1_Y \; \mid \;  \tilde{u} \in \tilde{H}^1_0(D)\}
\end{equation}
Clearly $W_1$ and $W_2$ are orthogonal subspaces of $H^1_{\#}(0,Y)$, and $W_3 := (W_1\oplus W_2)^{\bot}$.  As before we have 
\begin{equation}
H^1_{\#}(0,Y) = W_1 \oplus W_2 \oplus W_3
\end{equation}
and $W_3$ is identified with the subspace of $H^1_{\#}(0,Y)$ functions that are harmonic inside $D$ and $Y\setminus D$ respectively. The orthogonality between  $W_2$ and $W_3$ follows from the identity $\int_{\partial D}\partial_n w \, ds=0$ for $w\in W_3$.

For every $\alpha\in Y^\ast$ the associated  subspace $W_3$ can be decomposed into finite dimensional pairwise orthogonal subspaces associated with the eigenvalue problem for quasi-periodic electrostatic resonances.
$$\langle Tu,v \rangle = \mu\langle u,v \rangle\text{,  } u,v \in W_3$$
The operator $T$ defined by the sesqualinear form \eqref{threefourteen} is seen to be compact and selfadjoint on $W_3$ see, \cite{RobertRobert1}.
Since $T$ is compact and self adjoint on $W_3$, there exists a countable subset $\{ \mu_i \}_{i\in \mathbb{N}}$ of the real line with a single accumulation point at $0$ and an associated family of orthogonal finite-dimensional projections $\{ P_{\mu_i}\}_{i\in \mathbb{N}}$ such that
$$\langle \sum_{i=1}^\infty P_{\mu_i}u, v \rangle = \langle u,v \rangle \text{,       } u,v \in W_3$$
and
$$\langle \sum_{i=1}^\infty \mu_iP_{\mu_i}u, v \rangle = \langle Tu,v \rangle \text{,       } u,v \in W_3$$
where
$$-\frac{1}{2} \leq \mu_i \leq \frac{1}{2},$$
see \cite{RobertRobert1}.
The upper bound $1/2$ is the eigenvalue associated with the eigenfunction $\Pi\in H_{\#}^1(\alpha,Y)$ such that $\Pi=1$ in $D$ and is harmonic on $Y\setminus D$.  In \cite{RobertRobert1} an explicit lower bound $\mu^*$ is identified such that the inequality $-1/2<\mu^*\leq \mu_i$,   holds for a generic class of geometries uniformly with respect to $\alpha\in Y^\star$. It is shown that this bound is independent of $\alpha\in Y^\ast$ see \cite{RobertRobert1} and is discussed in sections \ref{radiusgeneralshape}
and \ref{radiusmultiplescatterers}.

Finally, we see that if $u_1\in W_1$ and $u_2\in W_2$, then  

$$\langle Tu_1,v \rangle = \frac{1}{2}\langle u_1,v \rangle \text{,}$$
$$\langle Tu_2, v\rangle = -\frac{1}{2}\langle u_2,v\rangle $$
for all $v\in H^1_{\#}(\alpha,Y)$.

Let $Q_1, Q_2$ be the orthogonal projections of $H^1_{\#}(\alpha,Y)$ onto $W_1$ and $W_2$ respectively, and define $P_1 := Q_1 + P_{1/2} \text{,  } P_2:= Q_2 $.  Here $P_{1/2}$ is the projection onto the one dimensional subspace spanned by the function $\Pi\in H_{\#}^1(\alpha,Y)$. Then $\{ P_1, P_2\} \cup \{ P_{\mu_i}\}_{-\frac{1}{2} < \mu_i < \frac{1}{2}}$ is an orthogonal family of projections, and
$$\langle P_1u + P_2u + \sum_{-\frac{1}{2} < \mu_i < \frac{1}{2}} P_{\mu_i}u, v\rangle = \langle u,v\rangle$$
for all $u,v \in H^1_{\#}(\alpha,Y)$.

The representation for $T^\alpha_k$ is given by the following lemma
\begin{lemma}{{\rm Representation of bilinear form, \cite{RobertRobert1}}}\\
\label{T_z spectrum}
The linear operator $T^\alpha_k: H^1_{\#}(\alpha,Y) \longrightarrow H^1_{\#}(\alpha,Y)$ associated with the bilinear form $B_k$ is  is given by
$$\langle T^\alpha_k u,v \rangle = \langle k P_1u + P_2u + \sum_{-\frac{1}{2} < \mu_i < \frac{1}{2}}[k(1/2 + \mu_i) + (1/2-\mu_i)] P_{\mu_i}u, v\rangle$$
for all $u,v\in H^1_{\#}(\alpha,Y)$.
\label{repbilinearlemma}
\end{lemma}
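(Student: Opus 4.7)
The plan is to mimic, verbatim at the level of structure, the proof of the earlier Theorem on the representation of $T_k$ on $\mathcal{H}$, simply replacing $\mathcal{H}$ by $H^1_\#(\alpha,Y)$ and using the quasi-periodic decomposition $H^1_\#(\alpha,Y) = W_1 \oplus W_2 \oplus W_3$ together with the spectral resolution of $W_3$ by the compact self-adjoint operator $T$ already recorded above (with its full family of orthogonal projections $\{P_{\mu_i}\}$ and limiting eigenvalues $\mu_i \in [-1/2,1/2]$).

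First I would take arbitrary $u,v \in H^1_\#(\alpha,Y)$ and decompose
\[
u = P_1 u + P_2 u + \sum_{-\tfrac12 < \mu_i < \tfrac12} P_{\mu_i} u,
\]
and evaluate $B_k$ separately on each summand, using that the projections are mutually orthogonal in the $\langle\cdot,\cdot\rangle$ inner product. For $P_2 u \in W_2$ the function is supported in $D$ (up to the constant correction, which has vanishing gradient), so $B_k(P_2 u, v) = \int_D \nabla P_2 u \cdot \nabla \bar v\,dx = \langle P_2 u, v\rangle$. For the $W_1$ piece together with the one-dimensional span of $\Pi$ (which is harmonic on $Y\setminus D$ and equal to $1$ on $D$, so has vanishing gradient inside $D$), we have $B_k(P_1 u, v) = k\int_{Y\setminus D}\nabla P_1 u \cdot \nabla \bar v\,dx = k \langle P_1 u, v\rangle$. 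These two identities reproduce the $P_1$ and $P_2$ terms in the claimed formula.

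The central step is the treatment of each $P_{\mu_i}$ with $\mu_i \in (-1/2,1/2)$. Here $w := P_{\mu_i} u \in W_3$ so $w$ is harmonic in both $D$ and $Y\setminus D$, and the defining identity $\langle T w, w\rangle = \mu_i \langle w, w\rangle$ combined with the definition \eqref{threefourteen} of $T$ gives
\[
\tfrac12\!\int_{Y\setminus D}|\nabla w|^2 - \tfrac12\!\int_D |\nabla w|^2 \;=\; \mu_i\left(\int_{Y\setminus D}|\nabla w|^2 + \int_D |\nabla w|^2\right),
\]
and the polarized analogue against any $v \in H^1_\#(\alpha,Y)$, so that
\[
\int_{Y\setminus D}\nabla w \cdot \nabla\bar v\,dx = \frac{1/2 + \mu_i}{1/2 - \mu_i}\int_D \nabla w \cdot \nabla\bar v\,dx.
\]
Substituting into $B_k(w,v) = k\int_{Y\setminus D}\nabla w \cdot \nabla\bar v + \int_D \nabla w \cdot \nabla\bar v$ and collecting yields
\[
B_k(P_{\mu_i}u,v) = \bigl[k(\tfrac12 + \mu_i) + (\tfrac12 - \mu_i)\bigr]\langle P_{\mu_i} u, v\rangle,
\]
which is precisely the $\mu_i$-term of the claim. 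Summing the three contributions and invoking orthogonality of the projections against the test vector $v$ completes the identification of $T_k^\alpha$.

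The main obstacle is not the algebra but making sure the decomposition and spectral resolution behind $P_1, P_2, \{P_{\mu_i}\}$ are valid in the quasi-periodic setting and in particular in the delicate case $\alpha = 0$, where $W_2$ must be defined with the mean-zero correction (as in \eqref{definitionW2periodic}) so as to sit inside the zero-mean space $H^1_\#(0,Y)$, and the distinguished eigenfunction $\Pi$ must be adjusted accordingly to lie in $H^1_\#(0,Y)$. Once those bookkeeping details are in place, which are exactly the ones carried out in \cite{RobertRobert1}, the argument above goes through without modification for every $\alpha \in Y^\ast$.
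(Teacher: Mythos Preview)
Your proposal is correct and follows essentially the same argument as the paper's proof of the analogous Theorem~\ref{T_z spectrum} for the Neumann case on $\mathcal{H}$: handle $P_1$ and $P_2$ directly, then use the eigenvalue relation $T(P_{\mu_i}u)=\mu_i P_{\mu_i}u$ to rewrite the $Y\setminus D$ integral as $\frac{1/2+\mu_i}{1/2-\mu_i}$ times the $D$ integral and collect terms. The paper itself does not re-prove this quasi-periodic lemma but cites it from \cite{RobertRobert1}; your proof simply transports the Neumann argument to $H^1_\#(\alpha,Y)$, which is precisely what is intended.
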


We analyze the Bloch spectra by  by developing a power series in $z=\frac{1}{k}$ about $z=0$ for the spectrum of the family of operators given by 
\begin{equation}
\begin{array}{lcl}
A^\alpha(z)& = &  (zP_1 + P_2 + z\sum_{-\frac{1}{2} < \mu_i < \frac{1}{2}}[(1/2 + \mu_i) + z(1/2-\mu_i)]^{-1} P_{\mu_i})(-\Delta_{\alpha})^{-1}.\\
\end{array}
\label{aalpharepresentation}
\end{equation}
Here the operator $(-\Delta_{\alpha})^{-1}$ defined for all $\alpha\in Y^\ast$ is given by
\begin{eqnarray}
(-\Delta_{\alpha})^{-1} u(x)=-\int_Y G^\alpha(x,y) u(y)\,dy,
\label{inverselaplacian22}
\end{eqnarray}
where $G^\alpha(x,y)$ is the Greens function for the quasi-periodic Laplace operator.

The operator  $A^{\alpha}(z)$ is self-adjoint for $k\in\mathbb{R}$ and is a family of bounded operators taking $L^2_{\#}(\alpha,Y)$ into itself and we have \cite{RobertRobert1}
\begin{lemma}
\label{inverseoperator1}{\em Spectrum of $A^\alpha(z)$}\\
\begin{enumerate}
\item
$A^{\alpha}(z)$ is holomorphic on $\Omega_0 := \mathbb{C} \setminus S$. 
Where $S=\cup_{i\in\mathbb{N}} z_i$ is the collection of points $z_i=(\mu_i+1/2)/(\mu_i-1/2)$ on the negative real axis associated with
the eigenvalues $\{\mu_i\}_{i\in\mathbb{N}}$. The set $S$ consists of poles  of $A^\alpha(z)$ with only one accumulation point at $z=-1$.
\item For $z\in \Omega_0$ the spectrum of $A^\alpha(z)$ denoted by $\sigma(A^\alpha(z))$ consists of eigenvalues $\beta_j^\alpha(z)$ of finite multiplicity with a possible accumulation point $z=0$.
\end{enumerate}
\end{lemma}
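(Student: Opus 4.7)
The plan is to mirror the analysis carried out for the Neumann operator $A(z)$ in Sections \ref{bandstructure}–\ref{asymptotic}, now using Lemma \ref{repbilinearlemma} to drive the spectral decomposition of $T_k^\alpha$ for each $\alpha\in Y^\star$. The two subspace decompositions (the $\alpha\neq 0$ and $\alpha=0$ cases) have already been set up above, so the only structural input we need is the mutual orthogonality of the family $\{P_1,P_2,P_{\mu_i}\}$ together with the spectrum $\{-1/2, 1/2, \mu_i\}$ of the auxiliary operator on $H^1_\#(\alpha,Y)$. From that, inverting $T_k^\alpha$ term-by-term and setting $z=1/k$ produces precisely the representation \eqref{aalpharepresentation}, so both claims will reduce to analytic properties of that representation composed with $(-\Delta_\alpha)^{-1}$.

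For part (1), I would fix a compact set $K\subset\mathbb{C}\setminus S$ and show operator-norm convergence of the series in \eqref{aalpharepresentation} on $K$. Since the projections $P_{\mu_i}$ are pairwise orthogonal on $H^1_\#(\alpha,Y)$, for $u$ with $\|u\|_{H^1_\#}\le 1$ one has
\begin{equation}
\Bigl\|\sum_{i} \frac{z\,P_{\mu_i} v}{(1/2+\mu_i)+z(1/2-\mu_i)}\Bigr\|^2_{H^1_\#}
= \sum_{i}\Bigl|\frac{z}{(1/2+\mu_i)+z(1/2-\mu_i)}\Bigr|^2 \|P_{\mu_i}v\|^2_{H^1_\#},
\end{equation}
where $v=(-\Delta_\alpha)^{-1}u$. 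The scalar factor is bounded uniformly in $i$ and $z\in K$ because $\mu_i\to 0$ and the corresponding poles $z_i=(\mu_i+1/2)/(\mu_i-1/2)$ cluster only at $z=-1$; hence $\mathrm{dist}(K,S)>0$. This yields a uniform bound of the summands by $C(K)\sum\|P_{\mu_i}v\|^2_{H^1_\#}\le C(K)\|v\|^2_{H^1_\#}$, giving norm convergence and analyticity on $\mathbb{C}\setminus S$. The location of the poles, confined to the negative real axis with the sole accumulation at $z=-1$, follows from $\mu_i\in(-1/2,1/2)$ and $\mu_i\to 0$.

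For part (2), $(-\Delta_\alpha)^{-1}$ is a bounded map $L^2_\#(\alpha,Y)\to H^1_\#(\alpha,Y)$ by Lax–Milgram (in the $\alpha=0$ case restricted to the mean-zero subspace), while the just-proved representation of $(T_k^\alpha)^{-1}$ shows that it is bounded on $H^1_\#(\alpha,Y)$ whenever $z\in\Omega_0$. The composition $A^\alpha(z)=(T_k^\alpha)^{-1}(-\Delta_\alpha)^{-1}$ is therefore bounded from $L^2_\#(\alpha,Y)$ into $H^1_\#(\alpha,Y)$, and the Rellich–Kondrachov compact embedding $H^1_\#(\alpha,Y)\hookrightarrow L^2_\#(\alpha,Y)$ makes $A^\alpha(z)$ a compact operator on $L^2_\#(\alpha,Y)$. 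The Riesz–Schauder theorem then yields a discrete spectrum $\{\beta_j^\alpha(z)\}$ of finite-dimensional eigenvalues with $0$ as the only possible accumulation point.

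The main obstacle is establishing the uniform bound on the scalar multipliers $z/((1/2+\mu_i)+z(1/2-\mu_i))$ on compact sets $K\subset\Omega_0$ together with the requisite control of the tail of the $P_{\mu_i}$-series: one must confirm that the uniform lower bound $\mathrm{dist}(K,S)>0$ really is achieved despite the infinite family of poles $z_i$, which is exactly where the accumulation statement ``only at $z=-1$'' plays its role. Everything else is a direct transcription of the argument already carried out for $A(z)$ (Lemma \ref{inverseoperator} and the surrounding compactness remark), with $L^2_0(Y),\mathcal{H},(-\Delta_N)^{-1}$ replaced by $L^2_\#(\alpha,Y),H^1_\#(\alpha,Y),(-\Delta_\alpha)^{-1}$.
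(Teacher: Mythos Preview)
The paper does not give its own proof of this lemma; it is simply cited from \cite{RobertRobert1}. Your proposal is correct and is precisely the transcription of the Neumann-case argument that the paper \emph{does} carry out (Lemma \ref{inverseoperator}, Theorem \ref{bounded}, Remark \ref{compact2}) to the quasi-periodic setting, so it matches the paper's implicit approach.
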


We have the following theorem 
\begin{theorem}{\rm \cite{RobertRobert1}}
\label{extension6}
The the Bloch eigenvalue problem \eqref{Eigen1localBloch} for $-\nabla(k(1-\chi_{D})+\chi_D)\nabla$ defined on $H_{\#}^1(\alpha,Y)$ can be extended for values of the coupling constant $k$ with $k^{-1}\in\mathbb{C}\setminus S$, such that for each $\alpha\in Y^\star$  the Bloch eigenvalues are of finite multiplicity and denoted by $\lambda_j(k,\alpha)=(\beta_j^{\alpha}(k^{-1}))^{-1}$, $j\in \mathbb{N}$ and the band structure
\begin{equation}
\lambda_j(k,\alpha)=\omega^2,\hbox{ $j\in\mathbb{N}$}
\label{DispersionRelations}
\end{equation}
extends to complex coupling constants $k$ with $k^{-1}\in\mathbb{C}\setminus S$.
\end{theorem}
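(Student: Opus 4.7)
The plan is to mirror the proof of Theorem \ref{extension} in the quasi-periodic setting, leaning on the representation of $T_k^\alpha$ from Lemma \ref{repbilinearlemma} and the spectral properties of $A^\alpha(z)$ from Lemma \ref{inverseoperator1}. The central object is the operator
\[
B^\alpha(k) := (T_k^\alpha)^{-1}(-\Delta_\alpha)^{-1}
\]
acting on $L^2_{\#}(\alpha,Y)$, where $(-\Delta_\alpha)^{-1}$ is defined through the quasi-periodic Green's function $G^\alpha$ as in \eqref{inverselaplacian22} (restricted to mean-zero functions when $\alpha=0$). The goal is to show that, for every $k$ with $k^{-1}\in\mathbb{C}\setminus S$, the operator $B^\alpha(k)$ is a bounded compact operator on $L^2_{\#}(\alpha,Y)$ and that the nonzero eigenvalues of its spectrum encode the Bloch eigenvalues of \eqref{Eigen1localBloch}.

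First I would verify that $T_k^\alpha$ is boundedly invertible on $H_{\#}^1(\alpha,Y)$ whenever $k^{-1}\in\mathbb{C}\setminus S$. Lemma \ref{repbilinearlemma} represents $T_k^\alpha$ as a diagonal sum over the orthogonal projections $P_1$, $P_2$ and $\{P_{\mu_i}\}_{-1/2<\mu_i<1/2}$, so its inverse is likewise diagonal with entries $k^{-1}$, $1$, and $k^{-1}[(1/2+\mu_i)+k^{-1}(1/2-\mu_i)]^{-1}$; each of the latter denominators vanishes exactly at the points of $S$ described in Lemma \ref{inverseoperator1}, and for $k^{-1}$ bounded away from $S$ the multipliers are uniformly bounded, yielding a bounded inverse. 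Coupled with Lax--Milgram applied to the quasi-periodic Dirichlet form on $H_{\#}^1(\alpha,Y)$, which delivers a bounded $(-\Delta_\alpha)^{-1}\colon L^2_{\#}(\alpha,Y)\to H_{\#}^1(\alpha,Y)$, this makes $B^\alpha(k)$ a bounded linear map from $L^2_{\#}(\alpha,Y)$ into $H_{\#}^1(\alpha,Y)$.

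Next, the Rellich compact embedding $H_{\#}^1(\alpha,Y)\hookrightarrow L^2_{\#}(\alpha,Y)$ shows that $B^\alpha(k)$ is a compact operator on $L^2_{\#}(\alpha,Y)$. Its spectrum therefore consists of eigenvalues $\{\gamma_j^\alpha(k)\}_{j\in\mathbb{N}}$ of finite multiplicity whose only possible accumulation point is $0$. Unwinding definitions, when $\gamma_j^\alpha(k)\neq 0$ the eigenrelation $B^\alpha(k)p = \gamma_j^\alpha(k)p$ places $p$ in $H_{\#}^1(\alpha,Y)$ (by the mapping property of $B^\alpha(k)$) and is equivalent, after applying $-\Delta_\alpha T_k^\alpha$ to both sides, to the weak Bloch problem \eqref{Eigen1localBloch} with $\omega^2 = \gamma_j^\alpha(k)^{-1}$. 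A comparison with \eqref{aalpharepresentation} shows $B^\alpha(k) = A^\alpha(k^{-1})$, hence $\gamma_j^\alpha(k) = \beta_j^\alpha(k^{-1})$, which yields the stated formula $\lambda_j(k,\alpha) = (\beta_j^\alpha(k^{-1}))^{-1}$ and extends the dispersion relations to $k^{-1}\in\mathbb{C}\setminus S$.

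The main obstacle is handling the $\alpha=0$ case in a way that preserves the Lax--Milgram step: coercivity of $-\Delta_0$ fails on all of $H^1_{\#}(0,Y)$ because of constant functions, and one must restrict to the mean-zero subspace and confirm that the decomposition $W_1\oplus W_2\oplus W_3$ used in Lemma \ref{repbilinearlemma} is compatible with this restriction. A secondary subtlety is ruling out the possibility that $\gamma_j^\alpha(k)=0$ corresponds to a genuine Bloch eigenfunction; this follows because the range of $B^\alpha(k)$ lies in $H_{\#}^1(\alpha,Y)$ and $T_k^\alpha$ is injective on that space for $k^{-1}\in\mathbb{C}\setminus S$, so the zero eigenvalue of $B^\alpha(k)$ is spurious and corresponds only to the $\lambda=\infty$ point excluded from the band structure.
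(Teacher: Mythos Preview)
Your proposal is correct and follows precisely the approach the paper sets up: the paper does not prove Theorem~\ref{extension6} directly (it is cited from \cite{RobertRobert1}), but the argument you give is the quasi-periodic analogue of the proof of Theorem~\ref{extension} in Section~\ref{bandstructure}, using the representation of $T_k^\alpha$ from Lemma~\ref{repbilinearlemma}, the Lax--Milgram construction of $(-\Delta_\alpha)^{-1}$, and compactness via the Rellich embedding, exactly as done there for the Neumann case. Your handling of the $\alpha=0$ mean-zero restriction and the spurious zero eigenvalue is also in line with the paper's treatment.
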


Analysis provided in \cite{RobertRobert1} shows that upper bounds  introduced the sections \ref{radiusgeneralshape}   and \ref{radiusmultiplescatterers} also apply to the set of quasi-periodic source free resonances and $S$ is bounded uniformly away from $0$ for all $\alpha\in Y^\ast$, i.e.,
\begin{eqnarray}
 \max_i \{z_i\}\leq\frac{\mu^\ast+1/2}{\mu^\ast-1/2}=z^\ast<0.
\label{bdsonSS}
\end{eqnarray}

Let $\beta^\alpha_0 \in \sigma(A^{\alpha}(0))$ with spectral projection $P(0)$, and let $\Gamma$ be a closed contour in $\mathbb{C}$ enclosing $\beta^\alpha_0$ but no other $\beta^\alpha_0 \in \sigma(A^{\alpha}(0))$.
The spectral projection associated with $\beta^\alpha (z) \in \sigma(A^{\alpha}(z))$ for $\beta^\alpha(z) \in \text{int}(\Gamma)$ is denoted by $P(z)$. As before we suppose all elements $\beta^\alpha(z) \in \text{int}(\Gamma)$ converge to $\beta^\alpha_0$ as $z\rightarrow0$. This collection is known as the eigenvalue group associated with $\beta^\alpha_0$.  We write $M(z) = P(z)L^2_{\#}(\alpha,Y)$ and suppose for the moment that $\Gamma$ lies in the resolvent of $A^\alpha(z)$ and $\text{dim}(M(0)) =\text{dim}(M(z))= m$, noting that  Theorems \ref{separationandraduus-alphanotzero} and \ref{separationandraduus-alphazero}  provide explicit conditions for when this holds true.  Since $A^{\alpha}(z)$ is analytic in a neighborhood of the origin we  write
\begin{equation}
A^{\alpha}(z) = A^{\alpha}(0) + \sum_{n=1}^{\infty} z^nA^{\alpha}_n.
\end{equation}
Define the resolvent of $A^{\alpha}(z)$ by
$$R(\zeta, z) = (A^{\alpha}(z) - \zeta )^{-1}\text{,}$$
and expanding successively in Neumann series and power series we have the identity

\begin{equation}
\begin{array}{lcl}
R(\zeta,z) & = & R(\zeta,0)[I + (A^{\alpha}(z) - A^{\alpha}(0))R(\zeta,0)]^{-1} \\
\\
& = & R(\zeta,0)+\sum_{p=1}^\infty [-(A^{\alpha}(z) - A^{\alpha}(0))R(\zeta,0)]^p\\
\\
& = & R(\zeta,0) + \sum_{n=1}^{\infty} z^nR_n(\zeta)\text{,}
\end{array}
\label{foursix6}
\end{equation}

where
$$R_n(\zeta) = \sum_{k_1 + \ldots k_p = n, k_j \geq 1} (-1)^pR(\zeta,0)A^{\alpha}_{k_1}R(\zeta,0)A^{\alpha}_{k_2}\ldots R(\zeta,0)A^{\alpha}_{k_p}$$
for $n\geq 1$.

Application of  the contour integral formula for spectral projections \cite{SzNagy}, \cite{TKato1}, \cite{TKato2} delivers the expansion for the spectral projection
\begin{equation}
\begin{array}{lcl}
P(z) & = & -\frac{1}{2\pi i} \oint_{\Gamma} R(\zeta, z)d\zeta\\
\\
& = & P(0) + \sum_{n=1}^\infty z^n P_n
\end{array}
\label{Project16}
\end{equation}
where $P_n =  -\frac{1}{2\pi i} \oint_{\Gamma} R_n(\zeta)d\zeta$.  Proceeding as before we get 
\begin{equation}
(A^{\alpha}(z) - \beta^\alpha_0)P(z) = - \frac{1}{2 \pi i}\oint_{\Gamma} (\zeta - \beta^\alpha_0)R(\zeta,z)d\zeta \text{,}
\label{cauchyformula6}
\end{equation}
and the power series representation for the operator $A^\alpha(z)$ follows from \eqref{foursix6}.

The operator $A^\alpha(z)$ is selfadjoint and bounded for $z\in\mathbb{R}\not\in S$ see, \cite{RobertRobert1}.
So for $z\in \mathbb{R}\setminus S$ there is a complete orthonormal system of eigenfunctions $\{\varphi_i(z)\}_{i\in\mathbb{N}}$ in $L^2_0(Y)$ and eigenvalues $\{\beta^\alpha_i(z)\}_{i\in\mathbb{N}}$ such that 
\begin{equation}
\label{complete}
\beta^\alpha_i(z)=(A^\alpha(z)\varphi_i(z),\varphi_i(z)), \hbox{   for   $z\in \mathbb{R}\setminus S$}.
\end{equation}
Now for $\beta^\alpha_0$ such that $\beta^\alpha_0=(A^\alpha(0)\varphi_i(0),\varphi_i(0))$ and $\beta^\alpha_i(z)$ associated with the eigenvalue group corresponding to $\beta^\alpha_0$ inside $\Gamma$ we have $P(z)\varphi_i(z)=\varphi_i(z)$ and from \eqref{cauchyformula6}
\begin{eqnarray}
&&\beta^\alpha_i(z) - \beta^\alpha_0 =((A^\alpha(z) - \beta^\alpha_0)P(z)\varphi_i(z),\varphi_i(z))\nonumber\\
&&=- (\frac{1}{2 \pi i}\oint_{\Gamma} (\zeta - \beta^\alpha_0)R(\zeta,z)d\zeta\varphi_i(z),\varphi_j(z)).
\label{fourten6}
\end{eqnarray}
As before the eigenfunctions $\varphi_i(z)$ are analytic in $z$, see \cite{KatoPerturb}. 

We develop a series for the eigenvalues suitable for the analysis. We apply \eqref{aalpharepresentation} and write
\begin{equation}
\begin{array}{lcl}
R(\zeta,z) & = & R(\zeta,0)+\sum_{p=1}^\infty [-(A^\alpha(z) - A(0))R(\zeta,0)]^p\\
\\
& = & R(\zeta,0) + \sum_{n=1}^{\infty} z^n({\mathcal{N}}^\alpha(\zeta,z))^n\text{,}
\end{array}
\label{foursixalternate6}
\end{equation}
where
\begin{equation}
\label{caligraphN6}
\mathcal{N}^\alpha(\zeta,z) =  [(P_1u + \sum_{-\frac{1}{2} < \mu_i < \frac{1}{2}}[(1/2 + \mu_i) + z(1/2-\mu_i)] ^{-1}P_{\mu_i})(-\Delta_\alpha)^{-1}R(\zeta,0)]
\end{equation}
for $n\geq 1$.
Equation \eqref{fourten6} together with \ref{foursixalternate6} deliver a series representation formula for  Neumann eigenvalues $\nu_i(k)=(\beta_i(\frac{1}{k}))^{-1}$ for real $k$ in a neighborhood of $\infty$. Substituting \eqref{foursixalternate6} into \eqref{fourten6} and manipulation  yields
\begin{equation}
{\beta}^\alpha_i(z) = \beta^\alpha_0 + \sum_{n=1}^\infty z^n\beta^n_i(z),
\label{foureleven6}
\end{equation}
where
\begin{equation}
{\beta}^n_i(z) = - \frac{1}{2\pi i} \left(\oint_{\Gamma}(\zeta-\beta_0)(\mathcal{N}^\alpha(\zeta,z))^nd\zeta\varphi_i(z),\varphi_i(z)\right);\hbox{ $n\geq 1$}.
\label{fourtwelve6}
\end{equation}
for $z$ in an interval containing $z=0$.

\section{Neuman and Bloch spectrum in the High Contrast Limit}
\label{limitspeczero}
We identify the spectrum of the limiting operator $A(0)$.  Using the representation
\begin{equation}
A(z) = (zP_1 + P_2 + z\sum \limits_{-\frac{1}{2} < \mu_i < \frac{1}{2}} [(1/2 + \mu_i) + z(1/2-\mu_i)]P_{\mu_i})(-\Delta_{N})^{-1} \text{,}
\end{equation}
we see that
\begin{equation}
A(0) = P_2(-\Delta_{N})^{-1},
\end{equation}
and denote the spectrum of $A(0)$ by $\sigma(A(0))$. 

 To begin consider the Dirichlet eigenvalues of the Laplace operator on $D$ associated with the spectral problem $-\Delta\psi=\delta\psi$, $\psi\in H^1_0(D)$ and denote the spectrum by $\sigma(-\Delta_D)$. The subset of Dirichlet eigenvalues associated with eigenfunctions having zero mean over $D$ is denoted by $\{\delta_j'\}_{j\in\mathbb{N}}$ and the set of Dirichlet eigenvalues associated with eigenfunctions with nonzero mean is denoted by $\{\delta_j^\ast\}_{j\in\mathbb{N}}$.  Next we introduce the sequence of numbers $\{\nu_j\}_{j\in\mathbb{N}}$ given by the positive roots $\nu$ of the  spectral function $S(\nu)$ defined by
\begin{equation}
S(\nu)=\nu\sum \limits_{i\in \mathbb{N}} \frac{ a^2_i}{\nu-\delta^*_i}-1,
\label{roots}
\end{equation}
where the coefficients  $a_j=|\int_D\psi_j\,dx|$ are integrals of eigenfunctions $\psi_j$ corresponding to the Dirichlet eigenvalues $\delta^*_j$. The explicit characterization of $\sigma(A(0))$ is given by the following theorem.
\begin{theorem}
\label{equiv2}
$\sigma(A(0)) = \{{\delta'_j}^{-1}\}_{j\in\mathbb{N}}\cup\{{\nu_j}^{-1}\}_{j\in\mathbb{N}}$.
\end{theorem}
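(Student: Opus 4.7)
The plan is to reduce the spectral problem $A(0)u = \beta u$ to an explicit boundary value problem on $D$. Since $A(0) = P_2(-\Delta_N)^{-1}$ has range contained in $W_2$, any eigenvector for a nonzero eigenvalue $\beta$ must lie in $W_2$. Using \eqref{definitionW2periodic} I write $u = \tilde u - c\cdot 1_Y$ with $\tilde u \in \tilde H_0^1(D)$ and $c = \int_D \tilde u\,dx$, and test the eigenvalue relation against an arbitrary $w = \tilde w - c_w 1_Y \in W_2$, where $c_w = \int_D \tilde w\,dx$, in the $\mathcal{H}$ inner product. Because $P_2$ is an orthogonal projection in $\mathcal{H}$ and $\langle (-\Delta_N)^{-1} u, w\rangle = (u,w)_{L^2(Y)}$, the eigenvalue equation is equivalent to $(u,w)_{L^2(Y)} = \beta\,\langle u, w\rangle$ for all $w \in W_2$.

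Expanding the left-hand side and using that $\tilde u, \tilde w$ are supported in $D$ and $|Y|=1$, I obtain
\[
\int_D(\tilde u - c)\bar{\tilde w}\,dx = \beta\int_D \nabla\tilde u\cdot\nabla\bar{\tilde w}\,dx\quad\text{for all } \tilde w \in H_0^1(D),
\]
which is the weak form of the inhomogeneous Dirichlet problem $-\Delta\tilde u = \delta(\tilde u - c)$ in $D$ with $\tilde u|_{\partial D} = 0$, $\delta = 1/\beta$, and the self-consistency constraint $c = \int_D \tilde u\,dx$.

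I then diagonalize this reduced problem in the orthonormal Dirichlet eigenbasis of $-\Delta$ on $D$, split into eigenfunctions with zero mean (at eigenvalues $\delta'_j$) and those with nonzero mean (at eigenvalues $\delta^*_j$, with integrals $a_j = \int_D \psi_j\,dx$). Using $1_D = \sum_j a_j \psi_j$ and projecting the reduced PDE onto each basis element yields $\alpha_j(\delta_j - \delta) = -\delta c\, a_j$. If $c = 0$, then $\tilde u$ is a zero-mean Dirichlet eigenfunction and $\delta$ must lie in $\{\delta'_j\}$. If $c \neq 0$, then $\delta \notin \{\delta^*_j\}$ and the nonzero-mean coefficients are uniquely $\alpha_j = \delta c\, a_j/(\delta - \delta^*_j)$; the self-consistency $c = \sum_j \alpha_j a_j$ then collapses (after dividing by $c$) to exactly $S(\delta) = 0$, so $\delta$ must lie in $\{\nu_j\}$. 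The converse direction is immediate, since each zero-mean Dirichlet eigenfunction and each explicit $\tilde u$ built from the formula above produces a nontrivial element of $W_2$ satisfying the weak equation, hence an eigenvector of $A(0)$.

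The main obstacle I anticipate is correctly tracking the cross term $-c\bar c_w$ in the expansion of $(u,w)_{L^2(Y)}$. This term, arising from the constant correction $-c\cdot 1_Y$ built into the definition of $W_2$ to enforce zero mean over $Y$, is exactly what couples the Dirichlet modes with nonzero mean and produces the additional spectral family $\{\nu_j\}$ alongside the pure zero-mean family $\{\delta'_j\}$. Once that term is accounted for correctly, everything else is linear algebra in the Dirichlet eigenbasis.
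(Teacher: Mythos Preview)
Your proposal is correct and follows essentially the same approach as the paper: both reduce the eigenvalue problem $A(0)u=\beta u$ to the weak identity $(u,w)_{L^2}=\beta\langle u,w\rangle$ for all $w\in W_2$, unpack it via the representation $u=\tilde u-c\,1_Y$ to obtain the inhomogeneous Dirichlet problem $-\Delta\tilde u=\delta(\tilde u-c)$ on $D$, and then split on whether $c=\int_D\tilde u$ vanishes to recover the zero-mean Dirichlet eigenvalues $\delta'_j$ and the roots $\nu_j$ of the spectral function $S$. Your treatment is slightly more explicit in the linear algebra (projecting via $1_D=\sum_j a_j\psi_j$ and writing the self-consistency condition directly) and you state the converse direction, which the paper leaves implicit, but the argument is otherwise identical.
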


To  establish the theorem we first  show that the eigenvalue problem
\begin{equation}
P_2(-\Delta_{N})^{-1}u=\lambda u
\label{limitneumanproblem1}
\end{equation}
 with $\lambda \in \sigma(A^{\alpha}(0))$ and eigenfunction $u\in L^2_0(Y)$ is  equivalent to finding $\lambda$ and $u\in W_2$ for which
\begin{equation}
(u, v)= \lambda \langle u, v \rangle, \hbox{ for all $v\in W_2$}.
\label{eqiveigenH01}
\end{equation}
To see the equivalence note that we have $u=P_2u$ and for $v \in \mathcal{H}$,
\begin{equation}
\label{fivefour}
\begin{array}{lcl}
\langle P_2(-\Delta_N)^{-1}u, v \rangle =  \lambda \langle u, v \rangle=\lambda \langle P_2u, v \rangle \end{array}
\end{equation}
hence
\begin{equation}
\label{fivefour2}
\begin{array}{lcl}
\langle (-\Delta_{N})^{-1}u, P_2v\rangle= \lambda \langle u, P_2v \rangle.
\end{array}
\end{equation}
Since $\langle (-\Delta_{N})^{-1}u, v\rangle =(u, v)$ for any $u\in L^2_0(Y)$ and $v \in \mathcal{H}$, equation  \eqref{fivefour2} becomes
\begin{equation}
\label{projectedalpnanotzero}
(u, P_2v)= \lambda \langle u, P_2v \rangle,
\end{equation}
and the equivalence follows noting that $P_2$ is the  projection of $\mathcal{H}$ onto $W_2$.

To conclude we  show that the set of eigenvalues for \eqref{limitneumanproblem1} is given by 
$\{{\delta'_j}^{-1}\}_{j\in\mathbb{N}}\cup\{\nu_j^{-1}\}_{j\in\mathbb{N}}$.
We see that $u\in W_2$ and from \eqref{definitionW2periodic} we have the dichotomy: $\int_D \tilde{u}dx=0$ and $u=\tilde{u}\in \tilde{H}^1_0(D)$ or  $\int_D \tilde{u}dx\not=0$ and $u=\tilde{u}-\gamma 1_Y$ with $\gamma=\int_D\tilde{u} dx$.  It is evident for the first case that the eigenfunction $u\in\tilde{H}^1_0(D)$ and for $v\in W_2$ given by 
\begin{equation}
\label{definitionW2periodicdefine}
v = \tilde{v} - \left( \int_D \tilde{v} dx \right) 1_Y \; \hbox{for} \;  \tilde{v} \in \tilde{H}^1_0(D)
\end{equation}
that problem \eqref{eqiveigenH01} becomes
\begin{equation}
\int_Du\overline{\tilde{v}} = \lambda \int_D\nabla u\cdot\nabla\overline{\tilde{v}}, \hbox{ for all $\tilde{v}\in \tilde{H}^1_0(D)$},
\label{eqiveigenW2first}
\end{equation}
and we conclude that $\tilde{u}$ is a Dirichlet eigenfunction with zero average over $D$ so $\lambda\in\{{\delta'_j}^{-1}\}_{j\in\mathbb{N}}$. While for the second, we have $u\in W_2$ and again
\begin{equation}
\int_Du\overline{\tilde{v}} = \lambda \int_D\nabla u\cdot\nabla\overline{\tilde{v}}, \hbox{ for all $\tilde{v}\in \tilde{H}^1_0(D)$}.
\label{eqiveigenW2nd}
\end{equation}
Writing $u=\tilde{u}-\gamma 1_Y$, $\lambda=\nu^{-1}$ and integration by parts in \eqref{eqiveigenW2nd} shows that $\tilde{u}\in\tilde{H}^1_0(D)$ is the solution of
\begin{eqnarray}
\Delta \tilde{u}+\nu \tilde{u}=\nu\gamma \hbox{ for $x\in D$}.
\label{eigenw2}
\end{eqnarray}
We normalize  $\tilde{u}$ so that $\gamma = \int_D\tilde{u}dx=1$  and write
\begin{equation}
\label{series}
\tilde{u} = \sum \limits_{j=1}^{\infty} c_j \psi_j
\end{equation}
where, $\psi_j$ are the Dirichlet eigenfunctions of $-\Delta_D$ associated with eigenvalue $\delta_j$ extended by zero to $Y$.  Substitution of \eqref{series} into \eqref{eigenw2} gives
\begin{equation}
\label{fourierequation}
\sum \limits_{j=1}^{\infty} (-\delta_j + \nu) c_j \psi_j = \nu.
\end{equation}
Multiplying both sides of \eqref{fourierequation} by $\overline{\psi_k}$ over $Y$ and orthonormality of $\{\psi_j\}_{j \in \mathbb{N}}$, shows that $\tilde{u}$ is given by
\begin{equation}
\label{finalformula}
\tilde{u} = \nu \sum \limits_{k\in \mathbb{N}} \frac{ \int_D \overline{{\psi}_k}}{\nu-\delta^*_k} \psi_k\text{,}
\end{equation}
where $\delta_k^*$ correspond to Dirichlet eigenvalues associated with eigenfunctions for which $\int_D\,\psi_k\,dx\not=0$. 
To calculate $\nu$, we integrate both sides of \eqref{finalformula} over $D$ to recover the identity
\begin{equation}
\nu \sum \limits_{k\in \mathbb{N}} \frac{ a^2_k}{\nu-\delta^*_k}-1=0.
\label{theIdentity}
\end{equation}
It follows from  \eqref{theIdentity} that $\lambda\in\{\nu_i^{-1}\}_{i\in\mathbb{N}}$ and the proof of Theorem \ref{equiv2} is complete.

We now recover the spectrum of the limiting operator $A^{\alpha}(0)$ when $\alpha \in Y^\ast$.  Using the representation
\begin{equation}
\label{alphaformula}
A^{\alpha}(z) = (zP_1 + P_2 + z\sum \limits_{-\frac{1}{2} < \mu_i < \frac{1}{2}} [(1/2 + \mu_i) + z(1/2-\mu_i)]P_{\mu_i})(-\Delta_{\alpha})^{-1} \text{,}
\end{equation}
we see that
\begin{equation}
A^{\alpha}(0) = P_2(-\Delta_{\alpha})^{-1}.
\end{equation}
The following theorem provides the explicit characterization of the spectrum  $\sigma(A^\alpha(0))$ for $\alpha\not=0$. 
\begin{theorem}
\label{equiv12}{\em Limit spectrum for quasi-periodic problem \cite{RobertRobert1}.}\\
 $\sigma(A^{\alpha}(0)) = \{{\delta'_j}^{-1}\}_{j\in\mathbb{N}}\cup\{(\delta_j^\ast)^{-1}\}_{j\in\mathbb{N}}=\sigma(-\Delta_D^{-1})$.
\end{theorem}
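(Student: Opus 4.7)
The plan is to mirror the argument already carried out for Theorem \ref{equiv2}, exploiting the crucial structural simplification that occurs when $\alpha\neq 0$: the constant function $1_Y$ is not $\alpha$-quasi-periodic, so it is absent from $H^1_{\#}(\alpha,Y)$. Consequently the subspace $W_2$ collapses to the straightforward extension-by-zero space $\tilde H^1_0(D)$, with no ``$\int_D \tilde u\, dx$'' correction term. This eliminates the dichotomy that produced the auxiliary spectral function $S(\nu)$ in the proof of Theorem \ref{equiv2}, and the analysis reduces directly to the Dirichlet eigenvalue problem on $D$.

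First I would establish the analogue of the equivalence \eqref{eqiveigenH01}: the eigenvalue problem $P_2(-\Delta_\alpha)^{-1}u=\lambda u$ with $u\in L^2_{\#}(\alpha,Y)$ is equivalent to finding $\lambda$ and $u\in W_2$ with
\begin{equation*}
(u,v)=\lambda\langle u,v\rangle\quad\text{for all } v\in W_2.
\end{equation*}
The reduction proceeds exactly as in the argument leading from \eqref{limitneumanproblem1} through \eqref{projectedalpnanotzero}: one applies $P_2$ to both sides, uses the defining property $\langle(-\Delta_\alpha)^{-1}u,v\rangle=(u,v)$ of the quasi-periodic Laplace resolvent, and invokes self-adjointness of $P_2$. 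Nothing in this step distinguishes between $\alpha=0$ and $\alpha\neq 0$ beyond the choice of Hilbert space and inner product $\langle\cdot,\cdot\rangle$.

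Second, I would use the explicit description of $W_2$ in the $\alpha\neq 0$ case, namely $W_2=\tilde H^1_0(D)$. Since every $u\in W_2$ is supported in $\overline D$ and test functions $v\in W_2$ vanish on $\partial D$, the variational identity above reads
\begin{equation*}
\int_D u\,\overline v\,dx=\lambda\int_D\nabla u\cdot\nabla\overline v\,dx\quad\text{for all } v\in H^1_0(D),
\end{equation*}
which is precisely the weak form of $-\Delta u=\lambda^{-1}u$ on $D$ with homogeneous Dirichlet boundary conditions. Hence $\lambda^{-1}$ ranges exactly over $\sigma(-\Delta_D)=\{\delta'_j\}_{j\in\mathbb{N}}\cup\{\delta^\ast_j\}_{j\in\mathbb{N}}$, and the asserted identification of $\sigma(A^\alpha(0))$ with $\sigma(-\Delta_D^{-1})$ follows immediately.

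The only genuine subtlety, and the place where one must be careful, is verifying that every solution of the reduced variational problem on $W_2$ actually produces an eigenfunction of the original operator $P_2(-\Delta_\alpha)^{-1}$ on $L^2_{\#}(\alpha,Y)$, i.e.\ that no spurious eigenvalues are introduced and none are lost when projecting onto $W_2$. This is handled by the observation that if $u\in W_2$ satisfies the reduced problem, then setting $w=(-\Delta_\alpha)^{-1}u$ and noting that $P_2 w\in W_2$ coincides with $\lambda u$ on $W_2$-test functions yields $P_2 w=\lambda u$ by Riesz representation in $W_2$; the zero eigenvalue (if any) is excluded by the $\tilde H^1_0(D)\hookrightarrow L^2(D)$ Poincar\'e inequality. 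With this verified, the proof is complete.
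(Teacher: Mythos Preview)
Your proposal is correct and follows exactly the route one would expect: it adapts the paper's own proof of Theorem~\ref{equiv2} (the Neumann case) to the quasi-periodic setting $\alpha\neq 0$, using the key structural simplification---already recorded in Section~\ref{asymptotic6}---that for $\alpha\neq 0$ the subspace $W_2$ is simply $\tilde H^1_0(D)$ with no mean-value correction. The paper itself does not prove Theorem~\ref{equiv12} here but cites \cite{RobertRobert1}; your argument is precisely the natural specialization of the Theorem~\ref{equiv2} proof and is what that reference contains.
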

To conclude we recover the limit spectrum $\sigma(A^0(0))$ for the periodic problem, see \cite{RobertRobert1}.
\begin{theorem}
\label{equiv22}
$\sigma(A^{0}(0)) = \{{\delta'_j}^{-1}\}_{j\in\mathbb{N}}\cup\{{\nu_j}^{-1}\}_{j\in\mathbb{N}}$.
\end{theorem}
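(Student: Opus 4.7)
The plan is to adapt the proof of Theorem~\ref{equiv2} essentially verbatim to the periodic setting. The operator $A^0(0) = P_2(-\Delta_0)^{-1}$ has the same structure as $A(0) = P_2(-\Delta_N)^{-1}$, and the crucial point is that the subspace $W_2\subset H^1_\#(0,Y)$ has exactly the same concrete description as in the Neumann case: its elements are of the form $\tilde u - (\int_D \tilde u\,dx)\,1_Y$ with $\tilde u \in \tilde H^1_0(D)$, using $|Y|=1$. Since the entire spectral analysis takes place inside $W_2$ (and, after the dichotomy below, inside $\tilde H^1_0(D)$), the boundary behaviour on $\partial Y$ never enters once the reduction to $W_2$ has been performed, and the Neumann versus periodic distinction becomes irrelevant.

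First I would perform the reduction: if $u \in L^2_\#(0,Y)$ is an eigenfunction of $A^0(0)$ with eigenvalue $\lambda$, then $u = P_2 u \in W_2$, and for all $v \in H^1_\#(0,Y)$ we have $\langle P_2(-\Delta_0)^{-1}u, v\rangle = \lambda \langle u, v\rangle$. Using the identity $\langle (-\Delta_0)^{-1}u, w\rangle = (u,w)$ (which is simply the definition of $(-\Delta_0)^{-1}$ as the Riesz representative on the Hilbert space $(H^1_\#(0,Y),\langle\cdot,\cdot\rangle)$) together with self-adjointness of $P_2$, this reduces to
\begin{equation}
(u,v) = \lambda \langle u, v\rangle\quad \text{for all } v\in W_2,
\end{equation}
which is the periodic analogue of \eqref{eqiveigenH01}. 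I would then split into the same two cases according to whether $\int_D \tilde u\,dx = 0$ or not. In the first case $u = \tilde u \in \tilde H^1_0(D)$, and testing against $v=\tilde v \in \tilde H^1_0(D)$ with $\int_D \tilde v\,dx = 0$ yields the classical Dirichlet eigenvalue problem restricted to zero-mean eigenfunctions, giving $\lambda^{-1}\in\{\delta'_j\}_{j\in\mathbb{N}}$. In the second case, normalize so that $\int_D \tilde u\,dx = 1$ and set $\lambda = \nu^{-1}$; then the Fourier expansion in the Dirichlet eigenbasis of $-\Delta_D$ carried out in \eqref{series}--\eqref{finalformula} reproduces
\begin{equation}
\tilde u = \nu \sum_{k\in\mathbb{N}} \frac{\int_D \overline{\psi_k}}{\nu - \delta^*_k}\,\psi_k,
\end{equation}
and integrating over $D$ yields $S(\nu)=0$, hence $\lambda^{-1}\in\{\nu_j\}_{j\in\mathbb{N}}$. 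The reverse inclusion is obtained by verifying directly that each Dirichlet eigenfunction with zero mean, and each function produced by the construction above, belongs to $W_2$ and satisfies the reduced identity.

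The main obstacle, such as it is, lies in the reduction step: one must justify that the orthogonal decomposition $H^1_\#(0,Y) = W_1\oplus W_2\oplus W_3$ (with the periodic $W_2$) behaves well enough that testing against $v\in H^1_\#(0,Y)$ really does collapse to testing against $v\in W_2$. This requires the Riesz identity $\langle(-\Delta_0)^{-1}u,v\rangle = (u,v)$ on all of $H^1_\#(0,Y)$, which follows from Lax--Milgram applied to the inner product $\langle\cdot,\cdot\rangle$ together with the Poincar\'e inequality for zero-mean periodic functions. Everything else is the same linear algebra and Dirichlet eigenfunction expansion on $D$ already used in Theorem~\ref{equiv2}, so the proof should proceed without additional technical difficulty.
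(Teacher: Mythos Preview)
Your proposal is correct and follows exactly the approach the paper takes: the paper proves the Neumann analogue (Theorem~\ref{equiv2}) in full detail and then defers the periodic case to the companion reference, precisely because the two $W_2$ subspaces coincide (since $|Y|=1$) and the entire argument lives inside $W_2$ and $\tilde H^1_0(D)$. Your reduction via the Riesz identity $\langle(-\Delta_0)^{-1}u,v\rangle=(u,v)$ and the subsequent dichotomy reproduce the paper's proof of Theorem~\ref{equiv2} line for line, which is exactly what is intended.
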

Theorems \ref{equiv2}, \ref{equiv12}, and \ref {equiv22} are in accord with   Lemma 2.3 parts (b) and (c) of \cite{HempelLienau}.

\section{Radius of Convergence and Separation of Spectra}
\label{radius}


Fix an inclusion geometry specified by the domain $D$. Suppose first $\alpha\in Y^\star$ and $\alpha\not =0$. Recall from Theorem \ref{equiv12}  that the spectrum of $A^\alpha(0)$ is $\sigma(-\Delta_D^{-1})$. Take $\Gamma$ to be a closed contour in $\mathbb{C}$ containing an eigenvalue  $\beta^\alpha_0$ in $\sigma(-\Delta^{-1}_{D})$ but no other element of $\sigma(-\Delta^{-1}_{D})$, see Figure \ref{spectrum}. Define $\hat{d}$ to be the distance between $\Gamma$ and $ \sigma(-\Delta^{-1}_{D})$, i.e., 
\begin{eqnarray}
\hat{d}={\rm{dist}}(\Gamma,\sigma(-\Delta^{-1}_{D})=\inf_{\zeta\in\Gamma}\{{\rm{dist}}(\zeta,\sigma(-\Delta^{-1}_{D})\}.
\label{dist}
\end{eqnarray}
The only component of the spectrum of $A^\alpha(0)$ inside $\Gamma$  is $\beta^\alpha_0$ and we denote this   by $\Sigma'(0)$. The part of the spectrum of $A^\alpha(0)$ in the  domain exterior to $\Gamma$ is denoted by $\Sigma''(0)$ and $\Sigma''(0)=\sigma(-\Delta^{-1}_{D})\setminus \beta^\alpha_0$. The invariant subspace of $A^\alpha(0)$ associated with $\Sigma'(0)$ is denoted by $M'(0)$ with $M'(0)=P(0)L^2_{\#}(\alpha,Y)$ .

Let $-1/2<\mu^\ast$ denote the lower bound on the quasi-periodic resonance eigenvalues for the domain $D$. It is noted that in the sequel a wide class of domains are identified for which there exist lower bound s on both quasi-periodic resonances and electro static source free resonances.  The corresponding upper bound on the set $z\in S$ for which $A^\alpha(z)$ is not invertible  is given by 
\begin{eqnarray}
z^\ast=\frac{\mu^\ast+1/2}{\mu^\ast-1/2}<0,
\label{upperonS}
\end{eqnarray}
see \eqref{bdsonS}.
Now set
\begin{equation}
r^*=\frac{|\alpha|^2\hat{d}|z^\ast|}{\frac{1}{1/2-\mu^\ast}+|\alpha|^2\hat{d}}.
\label{radiusalphanotzero}
\end{equation}
\begin{theorem}{\rm Separation of spectra and radius of convergence for $\alpha\in Y^\star$, $\alpha\not=0$.}\\
\label{separationandraduus-alphanotzero}
The following properties  hold for inclusions with domains $D$ that satisfy \eqref{upperonS}:
\begin{enumerate}
\item If $|z|<r^*$ then $\Gamma$ lies in the resolvent of both $A^\alpha(0)$ and $A^\alpha(z)$ and thus separates the spectrum of $A^\alpha(z)$ into two parts given by the component of spectrum of $A^\alpha(z)$ inside $\Gamma$ denoted by $\Sigma'(z)$ and the component exterior to $\Gamma$  denoted by $\Sigma''(z)$. $\Sigma'(z)$ consists of the eigenvalue group $\beta^\alpha(z)$ associated with $\beta_0^\alpha$. The invariant subspace of $A^\alpha(z)$ associated with $\Sigma'(z)$ is denoted by $M'(z)$ with $M'(z)=P(z)L^2_{\#}(\alpha,Y)$.

\item The projection $P(z)$ is holomorphic for $|z|<r^*$ and $P(z)$ is given by
\begin{eqnarray}
P(z)=\frac{-1}{2\pi i}\oint_\Gamma R(\zeta,z)\,d\zeta.
\label{formula}
\end{eqnarray}
\item The spaces $M'(z)$ and $M'(0)$ are isomorphic for $|z|<r^*$.
\item The  series \eqref{foureleven6} converges uniformly for $z\in\mathbb{R}$ with $|z|<r^*$.

\end{enumerate}
\end{theorem}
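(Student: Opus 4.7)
My plan is to apply standard Sz.-Nagy/Kato perturbation theory to the holomorphic family $A^\alpha(z)$, reducing all four claims to a single analytic estimate: the Neumann series
$$R(\zeta,z) = R(\zeta,0)\bigl[I + (A^\alpha(z) - A^\alpha(0))R(\zeta,0)\bigr]^{-1}$$
converges in operator norm uniformly for $\zeta \in \Gamma$ and $|z| < r^\ast$. Once this is established, (1) is immediate; (2) follows by integrating the Neumann series term-by-term against $\frac{1}{2\pi i}\, d\zeta$ around $\Gamma$; (3) follows because $P(z)$ is then a holomorphic idempotent, hence of constant rank on a connected neighborhood of $z=0$; and (4) follows by inserting the Neumann series into \eqref{fourten6} and noting that uniform convergence in $\zeta$ transfers to uniform convergence in $z$ on any compact subinterval of $(-r^\ast, r^\ast)$.

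The key estimate has two ingredients. First, since $A^\alpha(0) = P_2(-\Delta_\alpha)^{-1}$ is self-adjoint on $L^2_\#(\alpha,Y)$ with spectrum $\sigma(-\Delta_D^{-1})$ by Theorem \ref{equiv12}, the spectral theorem gives the resolvent bound $\|R(\zeta,0)\|_{L^2 \to L^2} \leq 1/\hat d$ uniformly in $\zeta \in \Gamma$. Second, from the representation in Lemma \ref{repbilinearlemma} and \eqref{aalpharepresentation},
$$A^\alpha(z) - A^\alpha(0) = z\Bigl[P_1 + \sum_{-\frac12 < \mu_i < \frac12} \frac{1}{(1/2+\mu_i) + z(1/2-\mu_i)}\,P_{\mu_i}\Bigr](-\Delta_\alpha)^{-1}.$$
I would factor the denominator as $(1/2-\mu_i)(z - z_i)$ with $z_i \leq z^\ast < 0$, apply the triangle inequality on the negative real axis to obtain $|(1/2+\mu_i)+z(1/2-\mu_i)| \geq (1/2-\mu^\ast)(|z^\ast|-|z|)$ whenever $|z|<|z^\ast|$, and use the mutual orthogonality of $\{P_1, P_{\mu_i}\}$ in the energy inner product to bound the $\mathcal{H}\to\mathcal{H}$ norm of the bracketed operator by $|z|/[(1/2-\mu^\ast)(|z^\ast|-|z|)]$. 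The Poincaré inequality $\|v\|_{L^2}\leq|\alpha|^{-1}\|v\|_{\mathcal{H}}$ for quasi-periodic functions with $\alpha\neq 0$, applied once to bound $(-\Delta_\alpha)^{-1}:L^2\to\mathcal{H}$ by $|\alpha|^{-1}$ and once more in the embedding $\mathcal{H}\hookrightarrow L^2$, contributes the overall factor $|\alpha|^{-2}$, yielding
$$\|A^\alpha(z) - A^\alpha(0)\|_{L^2 \to L^2} \leq \frac{|z|}{|\alpha|^2\,(1/2-\mu^\ast)(|z^\ast|-|z|)}.$$

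The contraction condition $\|(A^\alpha(z) - A^\alpha(0))R(\zeta,0)\|<1$ then becomes $|z|\bigl(\tfrac{1}{1/2-\mu^\ast} + |\alpha|^2\hat d\bigr) < |\alpha|^2\hat d\,|z^\ast|$ after clearing denominators, which rearranges exactly to $|z|<r^\ast$ with $r^\ast$ as in \eqref{radiusalphanotzero}. I expect the main obstacle to be the careful bookkeeping in this last step: the projections $P_{\mu_i}$ are orthogonal in the energy inner product $\langle\cdot,\cdot\rangle$ but the resolvent estimate lives in $L^2$, so one must track both Poincaré transitions between the two Hilbert-space norms to produce the factor $|\alpha|^{-2}$ (rather than $|\alpha|^{-1}$) in the denominator, and to confirm that the geometric constant $1/(1/2-\mu^\ast)$ enters in the correct position. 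Once the Neumann series is in hand, the remaining statements—holomorphicity of $P(z)$ via the contour integral, the isomorphism $M'(z)\cong M'(0)$ via the Sz.-Nagy transformation operator, and the uniform convergence of \eqref{foureleven6}—are formal consequences of the standard perturbation-theory toolkit.
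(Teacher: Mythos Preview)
Your proposal is correct and follows essentially the same route as the paper: the paper defers parts 1--3 of this theorem to \cite{RobertRobert1}, but the detailed argument it gives for the analogous Neumann case (Theorem~\ref{separationandraduus-Neumann}) in Section~\ref{derivation} matches your outline step for step---the resolvent bound $\|R(\zeta,0)\|\le\hat d^{-1}$, the two Poincar\'e transitions producing the factor $|\alpha|^{-2}$, the estimate on $\|(T_k^\alpha)^{-1}-P_2\|_{\mathcal H\to\mathcal H}$, and the algebraic rearrangement to $|z|<r^\ast$. Your reverse-triangle-inequality bound $|(1/2+\mu_i)+z(1/2-\mu_i)|\ge(1/2-\mu^\ast)(|z^\ast|-|z|)$ is in fact a slight streamlining of the paper's argument, which instead analyzes the auxiliary function $g(u,x)=((1/2+x)+u(1/2-x))^{-2}$ via its poles and monotonicity (Lemma~\ref{identifyu}) to reach the same conclusion; for part 4 the paper routes through the explicit Cauchy-type error estimate of Theorem~\ref{errorestimatesforexpansions}, which is the quantitative version of your uniform-convergence observation.
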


\begin{figure} 
\centering
\begin{tikzpicture}[xscale=0.70,yscale=0.70]
\draw [-,thick] (-6,0) -- (6,0);
\draw [<->,thick] (0,0) -- (1.5,0);
\node [above] at (1,0) {$\hat{d}$};
\draw [<->,thick] (1.5,0) -- (3,0);
\node [above] at (2,0) {$\hat{d}$};
\draw (-4,0.2) -- (-4.0, -0.2);
\node [below] at (-4,0) {$\hat{\beta}^\alpha_0$};
\draw (0,0.2) -- (0, -0.2);
\node [below] at (0,0) {$\beta^\alpha_0$};
\draw (3,0.2) -- (3, -0.2);
\node [below] at (3,0) {$\check{\beta}^\alpha_0$};
\draw (0,0) circle [radius=1.5];
\node [right] at (1.1,1.1) {$\Gamma$};
\end{tikzpicture} 
\caption{$\Gamma$}
 \label{spectrum}
\end{figure}

Suppose now $\alpha=0$. Recall from Theorem \ref{equiv22} that the limit spectrum for $A^0(0)$ is
$\sigma(A^0(0)) =  \{{\delta'_j}^{-1}\}_{j\in\mathbb{N}}\cup\{{\nu_j}^{-1}\}_{j\in\mathbb{N}}$. For this case take $\Gamma$ to be the closed contour in $\mathbb{C}$ containing an eigenvalue $\beta_0^0$ in $\sigma(A^0(0))$ but no other element of
$\sigma(A^0(0))$ and define 
\begin{eqnarray}
\label{dforperiodic}
\hat{d}=\inf_{\zeta\in\Gamma}\{\rm{dist}(\zeta,\sigma(A^0(0)))\}.
\end{eqnarray}
Suppose the lowest quasi-periodic resonance eigenvalue for the domain $D$ lies inside $-1/2<\mu^\ast<0$ and the corresponding upper bound on $S$ is given by 
\begin{eqnarray}
z^\ast=\frac{\mu^\ast+1/2}{\mu^\ast-1/2}<0.
\label{upperonSzero}
\end{eqnarray}
Set
\begin{equation}
r^*=\frac{4\pi^2\hat{d}|z^\ast|}{\frac{1}{1/2-\mu^\ast}+4\pi^2\hat{d}}.
\label{radiusalphazero}
\end{equation}
\begin{theorem}{\rm Separation of spectra and radius of convergence for $\alpha=0$.}
\label{separationandraduus-alphazero}
\\
The following properties  hold for inclusions with domains $D$ that satisfy \eqref{upperonSzero}: 
\begin{enumerate}
\item If $|z|<r^*$ then $\Gamma$ lies in the resolvent of both $A^0(0)$ and $A^0(z)$ and thus separates the spectrum of $A^0(z)$ into two parts given by the component of spectrum of $A^0(z)$ inside $\Gamma$ denoted by $\Sigma'(z)$ and the component exterior to $\Gamma$  denoted by $\Sigma''(z)$. $\Sigma'(z)$ consists of the eigenvalue group $\beta^0(z)$ associated with $\beta_0^0$. The invariant subspace of $A^0(z)$ associated with $\Sigma'(z)$ is denoted by $M'(z)$ with $M'(z)=P(z)L^2_{\#}(0,Y)$.

\item The projection $P(z)$ is holomorphic for $|z|<r^*$ and $P(z)$ is given by
\begin{eqnarray}
P(z)=\frac{-1}{2\pi i}\oint_\Gamma R(\zeta,z)\,d\zeta.
\label{formula}
\end{eqnarray}
\item The spaces $M'(z)$ and $M'(0)$ are isomorphic for $|z|<r^*$.
\item The series \eqref{foureleven6} converges uniformly for $z\in\mathbb{R}$ with $|z|<r^*$.
\end{enumerate}
\end{theorem}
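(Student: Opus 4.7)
The plan is to mirror the proof of Theorem \ref{separationandraduus-alphanotzero} line-by-line, with the only substantive change being the replacement of the lower bound $|\alpha|^2$ on the spectrum of $-\Delta_\alpha$ by the lower bound $4\pi^2$ on the spectrum of $-\Delta_0$ acting on mean-zero periodic functions of $L^2_\#(0,Y)$. Indeed, eigenfunctions of $-\Delta_0$ on $Y=(0,1]^d$ are $e^{2\pi i n\cdot x}$ with $n\in\mathbb{Z}^d$, and the mean-zero constraint rules out $n=0$, so the smallest eigenvalue is $4\pi^2$ and hence $\|(-\Delta_0)^{-1}\|_{L^2_\#(0,Y)\to L^2_\#(0,Y)}\leq 1/(4\pi^2)$. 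This is exactly what will replace the factor $|\alpha|^{-2}$ appearing in the analogous estimate for $\alpha\neq 0$.

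First I would bound the perturbation
$$A^0(z)-A^0(0)=z\Bigl(P_1+\sum_{-1/2<\mu_i<1/2}\frac{P_{\mu_i}}{(1/2+\mu_i)+z(1/2-\mu_i)}\Bigr)(-\Delta_0)^{-1}.$$
Using the factorization $(1/2+\mu_i)+z(1/2-\mu_i)=(1/2-\mu_i)(z-z_i)$ with $z_i=(\mu_i+1/2)/(\mu_i-1/2)$, together with $\mu_i\geq\mu^*$ and $|z_i|\geq|z^*|$, one checks on the disk $|z|<|z^*|$ that
$$|(1/2+\mu_i)+z(1/2-\mu_i)|\;\geq\;(1/2-\mu^*)(|z^*|-|z|).$$
Since $\{P_1,P_{\mu_i}\}$ is an orthogonal family of projections on $\mathcal{H}$, combining with the Poincaré bound on $(-\Delta_0)^{-1}$ yields
$$\|A^0(z)-A^0(0)\|\;\leq\;\frac{|z|}{(1/2-\mu^*)(|z^*|-|z|)\cdot 4\pi^2}.$$

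Since $\Gamma$ encloses only $\beta_0^0\in\sigma(A^0(0))$ and lies at distance $\hat{d}$ from the rest of $\sigma(A^0(0))$, we have $\|R(\zeta,0)\|\leq 1/\hat{d}$ for $\zeta\in\Gamma$. The second resolvent identity
$$R(\zeta,z)=R(\zeta,0)\bigl[I+(A^0(z)-A^0(0))R(\zeta,0)\bigr]^{-1}$$
then converges uniformly in $\zeta\in\Gamma$ as a Neumann series precisely when
$$\frac{|z|}{(1/2-\mu^*)(|z^*|-|z|)\cdot 4\pi^2\hat{d}}<1,$$
which rearranges to $|z|<r^*$ as defined in \eqref{radiusalphazero}. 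This establishes conclusion 1 (so $\Gamma$ lies in the resolvent of both $A^0(0)$ and $A^0(z)$, thereby separating the spectrum of $A^0(z)$) and gives the holomorphy of $z\mapsto R(\zeta,z)$ used in conclusion 2 via the formula $P(z)=-\frac{1}{2\pi i}\oint_\Gamma R(\zeta,z)d\zeta$. Conclusion 3 follows from the Kato principle \cite{KatoPerturb} that a holomorphic family of projections on a connected set has constant rank. Conclusion 4 follows by substituting the convergent Neumann expansion \eqref{foursixalternate6} into the Cauchy-type identity \eqref{cauchyformula6} and pairing against the orthonormal eigenfunctions $\varphi_i(z)$ guaranteed by the self-adjointness of $A^0(z)$ for real $z\notin S$.

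The main obstacle is the first step, where the uniform control of the resonance denominators $|(1/2+\mu_i)+z(1/2-\mu_i)|^{-1}$ across all $i\in\mathbb{N}$ relies essentially on the \emph{existence} of the uniform lower bound $\mu^*>-1/2$ on the Neumann electrostatic resonances. Once that bound (provided by Theorems \ref{separationandraduus-Neumann} and the geometric hypotheses of section \ref{radiusgeneralshape}) is in hand, everything else is standard analytic perturbation theory of self-adjoint operators, applied exactly as in the proof of Theorem \ref{separationandraduus-alphanotzero}.
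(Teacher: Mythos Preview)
Your proposal is correct and follows essentially the same route as the paper. The paper itself defers parts 1--3 of Theorem \ref{separationandraduus-alphazero} to \cite{RobertRobert1} and derives part 4 from the error estimates of Theorem \ref{errorestm}; the detailed argument it does give (for the Neumann analogue, Theorem \ref{separationandraduus-Neumann}, in section \ref{derivation}) matches your outline: bound $\|A^0(z)-A^0(0)\|$ via Poincar\'e plus the spectral representation of $(T_k)^{-1}-P_2$, bound $\|R(\zeta,0)\|\le\hat d^{-1}$, and feed both into the Neumann-series criterion \eqref{tenone}. Your identification of $4\pi^2$ as the replacement for $|\alpha|^2$ (the bottom of the spectrum of $-\Delta_0$ on mean-zero periodic functions) is exactly the point, and your invocation of Kato's constant-rank lemma for part 3 is the same as the paper's appeal to \cite{KatoPerturb}, Chapter I \S 4. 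One small correction: the lower bound $\mu^*>-1/2$ you need is supplied by Theorem \ref{spectralboundonmu}, not Theorem \ref{separationandraduus-Neumann}.
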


Now consider the Neumann spectrum. Recall from Theorem \ref{equiv2} that the limit spectrum for $A(0)$ is $\sigma(A(0)) =  \{{\delta'_j}^{-1}\}_{j\in\mathbb{N}}\cup\{{\nu_j}^{-1}\}_{j\in\mathbb{N}}$. For this case take $\Gamma$ to be the closed contour in $\mathbb{C}$ containing an eigenvalue $\beta_0$ in $\sigma(A(0))$ but no other element of
$\sigma(A(0))$ and define 
\begin{eqnarray}
\label{dforNeumann}
\hat{d}=\inf_{\zeta\in\Gamma}\{\rm{dist}(\zeta,\sigma(A(0)))\}.
\end{eqnarray}
Suppose the lowest quasi-periodic resonance eigenvalue for the domain $D$ lies inside $-1/2<\mu^\ast<0$ and the corresponding upper bound on $S$ is given by 
\begin{eqnarray}
z^\ast=\frac{\mu^\ast+1/2}{\mu^\ast-1/2}<0.
\label{upperonSNeumann}
\end{eqnarray}
Set
\begin{equation}
r^*=\frac{\pi^2\hat{d}|z^\ast|}{\frac{1}{1/2-\mu^\ast}+\pi^2\hat{d}}.
\label{radiusalphazero}
\end{equation}
\begin{theorem}{\rm Separation of Neumann spectra and radius of convergence.}
\label{separationandraduus-Neumann}
\\
The following properties  hold for inclusions with domains $D$ that satisfy 
\eqref{upperonSNeumann}: 
\begin{enumerate}
\item If $|z|<r^*$ then $\Gamma$ lies in the resolvent of both $A(0)$ and $A(z)$ and thus separates the spectrum of $A(z)$ into two parts given by the component of spectrum of $A(z)$ inside $\Gamma$ denoted by $\Sigma'(z)$ and the component exterior to $\Gamma$  denoted by $\Sigma''(z)$. $\Sigma'(z)$ consists of the eigenvalue group $\beta(z)$ associated with $\beta_0$. The invariant subspace of $A(z)$ associated with $\Sigma'(z)$ is denoted by $M'(z)$ with $M'(z)=P(z)L^2_{0}(Y)$.

\item The projection $P(z)$ is holomorphic for $|z|<r^*$ and $P(z)$ is given by
\begin{eqnarray}
P(z)=\frac{-1}{2\pi i}\oint_\Gamma R(\zeta,z)\,d\zeta.
\label{formula}
\end{eqnarray}
\item The spaces $M'(z)$ and $M'(0)$ are isomorphic for $|z|<r^*$.
\item The series \eqref{foureleven} converges uniformly for $z\in\mathbb{R}$ with $|z|<r^*$.
\end{enumerate}
\end{theorem}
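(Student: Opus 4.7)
The plan is to reduce all four assertions to a single analytic estimate: that the Neumann series expansion of the resolvent $R(\zeta, z) = (A(z) - \zeta)^{-1}$ about $z = 0$ converges uniformly for $\zeta \in \Gamma$ whenever $|z| < r^*$. Once this is secured, the four items follow from the contour-integral perturbation machinery already developed in Section \ref{asymptotic}.

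I would begin with the spectral representation in Theorem \ref{T_z spectrum}, which gives
\[
A(z) - A(0) = (T_k^{-1} - P_2)(-\Delta_N)^{-1} = \Bigl[z P_1 + z \sum_{-1/2 < \mu_i < 1/2} \frac{P_{\mu_i}}{(1/2 + \mu_i) + z(1/2 - \mu_i)}\Bigr](-\Delta_N)^{-1}.
\]
Orthogonality of $\{P_1\} \cup \{P_{\mu_i}\}$ in $\mathcal{H}$ reduces the $\mathcal{H}$-operator norm of $T_k^{-1} - P_2$ to the supremum of its scalar coefficients. Factoring each denominator as $(1/2 - \mu_i)(z - z_i)$ with $z_i \leq z^* < 0$, invoking the uniform lower bound $\mu^* \leq \mu_i$, and checking monotonicity of the resulting supremum in $\mu_i$ yields
\[
\|T_k^{-1} - P_2\|_{\mathcal{H} \to \mathcal{H}} \leq \frac{|z|}{(1/2 - \mu^*)(|z^*| - |z|)}, \qquad |z| < |z^*|.
\]

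Next I would invoke the Poincar\'e--Neumann inequality on $Y = (0,1]^d$, whose smallest nonzero Neumann eigenvalue is $\pi^2$: this gives $\|(-\Delta_N)^{-1}\|_{L^2_0 \to \mathcal{H}} \leq 1/\pi$ together with an embedding $\mathcal{H} \hookrightarrow L^2_0$ of norm $1/\pi$, so that
\[
\|A(z) - A(0)\|_{L^2_0 \to L^2_0} \leq \frac{|z|}{\pi^2 (1/2 - \mu^*)(|z^*| - |z|)}.
\]
Since $A(0)$ is self-adjoint on $L^2_0$ (as the real-$z$ limit of the self-adjoint family $A(z)$), one has $\|R(\zeta, 0)\| \leq 1/\hat{d}$ uniformly for $\zeta \in \Gamma$. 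The Neumann series for $R(\zeta, z)$ in \eqref{foursixalternate} then converges uniformly on $\Gamma$ provided $\|A(z) - A(0)\|/\hat{d} < 1$, and algebraic rearrangement shows this inequality is precisely equivalent to $|z| < r^*$ with the stated closed form.

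With uniform convergence of the resolvent series in hand, item 1 is immediate because $\Gamma$ then lies in the resolvent sets of both $A(0)$ and $A(z)$ and separates $\sigma(A(z))$ as claimed. Item 2 follows by substituting the Neumann series into the contour integral for $P(z)$ and interchanging sum and integral, which is justified by the uniform convergence. Item 3 is the classical Sz.-Nagy argument: norm-continuity of $P(z)$ in $z$ implies $\|P(z) - P(0)\| < 1$ on a neighborhood of $0$, which forces $P(z)|_{M'(0)}: M'(0) \to M'(z)$ to be an isomorphism. Item 4 follows by substituting \eqref{foursixalternate} into \eqref{fourten} and reading off the coefficients in \eqref{foureleven}--\eqref{fourtwelve}, with uniformity inherited from the resolvent estimate. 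The main obstacle is the bookkeeping of sharp constants: tracking how the $1/\pi^2$ from the Poincar\'e inequality, the factor $(1/2 - \mu^*)^{-1}$ from the uniform resonance lower bound, and the geometric distance $\hat{d}$ combine precisely into the displayed formula for $r^*$ and not a crude proxy of it.
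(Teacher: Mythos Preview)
Your proposal is correct and follows essentially the same route as the paper's proof in Section~\ref{derivation}: both reduce everything to the single inequality $\|(A(z)-A(0))R(\zeta,0)\|<1$, bound $\|T_k^{-1}-P_2\|_{\mathcal H\to\mathcal H}$ via the spectral decomposition and the uniform lower bound $\mu^\ast$, combine with the Poincar\'e constant $\pi^2$ and the resolvent bound $\hat d^{-1}$, and solve the resulting inequality to obtain exactly $r^\ast$. The only cosmetic difference is that the paper carries out your ``checking monotonicity'' step explicitly by analyzing the auxiliary function $g(u,x)=\bigl((\tfrac12+x)+u(\tfrac12-x)\bigr)^{-2}$ and its partial derivative (Lemma~\ref{identifyu}), and for item~3 invokes Kato's Lemma I.4.10 rather than the Sz.-Nagy formulation, while deriving item~4 from the Cauchy-type error estimates of Theorem~\ref{errorestm} rather than directly from the resolvent series---but these are the same arguments packaged differently.
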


We have the estimate for the error incurred when only finitely many terms of the series \ref{foureleven} and \ref{foureleven6} are calculated.
\begin{theorem}{\rm Error estimates for the eigenvalue expansion}.\\
\label{errorestimatesforexpansions}
\begin{enumerate}
\label{errorestm}
\item Let $\alpha \neq 0$, and suppose $D$, $z^*$, and $r^*$ are as in Theorem \ref{separationandraduus-alphanotzero}. Then the following error estimate for the series \eqref{foureleven6} holds for $z\in\mathbb{R}$ and $|z|<r^*$:
\begin{equation}
\label{errorestalpha}
\left |{\beta}^{\alpha}_i(z) - \sum \limits_{n = 0}^{p} z^n \beta^n_i(z) \right | \leq \frac{\hat{d}|z|^{p+1}}{(r^*)^p(r^* - |z|)}.
\end{equation}
\item Let $\alpha = 0$, and suppose $D$, $z^*$, and $r^*$ are as in Theorem \ref{separationandraduus-alphazero}. Then the following error estimate for the series \eqref{foureleven6} holds for $z\in\mathbb{R}$ and $|z|<r^*$:
\begin{equation}
\left |{\beta}^{0}_i(z) - \sum \limits_{n = 0}^{p} z^n \beta^n_i(z) \right | \leq \frac{\hat{d}|z|^{p+1}}{(r^*)^p(r^* - |z|)}.
\label{errorestalphaequalzero}
\end{equation}
\item Consider the Neumann spectrum and suppose $D$, $z^*$, and $r^*$ are as in Theorem \ref{separationandraduus-Neumann}. Then the following error estimate for the series \eqref{foureleven} holds for $z\in\mathbb{R}$ and $|z|<r^*$:
\begin{equation}
\left |{\beta}_i(z) - \sum \limits_{n = 0}^{p} z^n \beta^n_i(z) \right | \leq \frac{\hat{d}|z|^{p+1}}{(r^*)^p(r^* - |z|)}.
\label{errorestNeumann}
\end{equation}
\end{enumerate}
\end{theorem}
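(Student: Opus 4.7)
The strategy is to combine the analyticity of $\beta_i^\alpha(z)$ on the disk $|z|<r^*$ (established in the separation theorems) with a Cauchy-style tail estimate. Since the three parts are structurally identical, I would write the argument once for $\alpha\neq0$ and simply substitute the appropriate $r^*$, $\hat{d}$, and $\Gamma$ from Theorems~\ref{separationandraduus-alphazero} and \ref{separationandraduus-Neumann} in the remaining cases.

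First, I would extract the geometric consequence of Theorem~\ref{separationandraduus-alphanotzero}: for every $z$ with $|z|<r^*$, the eigenvalue $\beta_i^\alpha(z)$ lies in $\Sigma'(z)$, which is enclosed by $\Gamma$. Since $\Gamma$ may be taken to be the circle of radius $\hat{d}$ centered at $\beta_0^\alpha$ (by the definition \eqref{dist} of $\hat{d}$ together with the fact that $\beta_0^\alpha$ is the only element of $\sigma(A^\alpha(0))$ inside $\Gamma$), this yields the containment $|\beta_i^\alpha(z)-\beta_0^\alpha|\leq \hat{d}$ throughout $|z|<r^*$. Moreover, part (4) of the same theorem guarantees that $g(z):=\beta_i^\alpha(z)-\beta_0^\alpha$ is holomorphic on this disk and is represented there by the series $\sum_{n=1}^\infty z^n\beta_i^n(z)$ of \eqref{foureleven6}.

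Next, I would identify the truncation error with a remainder of the geometric Neumann expansion. Splitting the resolvent series as $R(\zeta,z)=R(\zeta,0)+\sum_{n=1}^p z^n(\mathcal{N}^\alpha(\zeta,z))^n+\text{(tail)}$ and substituting into the Cauchy representation \eqref{cauchyformula6} for $(A^\alpha(z)-\beta_0^\alpha)P(z)$, then pairing with $\varphi_i(z)$, exhibits the truncation error $\beta_i^\alpha(z)-\sum_{n=0}^p z^n\beta_i^n(z)$ (with $z^0\beta_i^0(z)$ interpreted as $\beta_0^\alpha$) as $\sum_{n=p+1}^\infty z^n\beta_i^n(z)$. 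Applying the classical Cauchy estimate to $g$ on circles $|z|=r$ with $r<r^*$ and letting $r\uparrow r^*$ gives the Taylor coefficient bound $|a_n|\leq \hat{d}/(r^*)^n$, so
\[
\left|\sum_{n=p+1}^\infty a_n z^n\right|\leq \sum_{n=p+1}^\infty \frac{\hat{d}|z|^n}{(r^*)^n}=\frac{\hat{d}|z|^{p+1}}{(r^*)^p(r^*-|z|)},
\]
which is precisely \eqref{errorestalpha}.

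The main obstacle is verifying the second step, namely that the $z$-dependent coefficients $\beta_i^n(z)$ (which arise from collecting Neumann iterates) assemble into a partial sum that genuinely truncates $g$ at order $p$ in $z$, so that the Cauchy bound on the Taylor tail applies to the stated expression. One can sidestep this identification by bounding the tail directly in operator norm: the construction of $r^*$ in Theorem~\ref{separationandraduus-alphanotzero} ensures that $\|z\,\mathcal{N}^\alpha(\zeta,z)\|\leq |z|/r^*$ uniformly on $\Gamma$, which propagates term by term through the Neumann series to give $|z^n\beta_i^n(z)|\leq \hat{d}(|z|/r^*)^n$ and hence the same geometric tail bound. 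Parts (2) and (3) of the theorem then follow by the identical argument applied to $A^0(z)$ and $A(z)$, with $r^*$ and $\hat{d}$ drawn from Theorems~\ref{separationandraduus-alphazero} and \ref{separationandraduus-Neumann} respectively.
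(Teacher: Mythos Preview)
Your proposal is essentially correct and, in its ``sidestep'' route, coincides with the paper's own argument. The paper does not attempt the classical Cauchy estimate on Taylor coefficients at all (and you are right to flag that route as problematic, since the $\beta_i^n(z)$ in \eqref{foureleven} and \eqref{foureleven6} are $z$-dependent and are not the Taylor coefficients of $g$); instead it goes directly to the operator-norm bound you describe. Concretely, the paper uses the explicit estimate $\|(A(z)-A(0))R(\zeta,0)\|\le B(z)$ derived in Section~\ref{derivation}, notes that $|\zeta-\beta_0|=\hat d$ on $\Gamma$, and obtains from \eqref{fourtwelve} the bound $|\beta_i^n(z)|\le \hat d\,\|\mathcal{N}(\zeta,z)\|^n$. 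It then observes that the right-hand side is increasing in $|z|$ and, by the very definition \eqref{thezridenty} of $r^*$, attains the value $\hat d\,(r^*)^{-n}$ at $|z|=r^*$; hence $|\beta_i^n(z)|\le \hat d\,(r^*)^{-n}$ uniformly on $|z|<r^*$. Summing the tail geometrically gives \eqref{errorestNeumann}, and parts~1 and~2 follow by the identical computation with the quasi-periodic operators, exactly as you indicate.
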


\begin{remark}
\label{Uniformini}
Note that the estimates \eqref{errorestalpha} through \eqref{errorestNeumann} are uniform in the index $i$ for elements in the eigenvalue group restricted to the real axis for $|z|<r^\ast$.
\end{remark}

The Theorems \ref{separationandraduus-alphanotzero} and \ref{separationandraduus-alphazero} parts 1 through 3 and explicit convergence radii for power series representation for the eigenvalue group are proved in \cite{RobertRobert1}. The proofs of part 4 of Theorems \ref{separationandraduus-alphanotzero} and  \ref{separationandraduus-alphazero} together with Theorems \ref{separationandraduus-Neumann}
and   \ref{errorestm} are given in section \ref{derivation}.

\section{Radius of Convergence and Separation of Spectra for Periodic Scatterers of General Shape}
\label{radiusgeneralshape}

In this section we describe general  conditions on photonic or phononic crystals that guarantee  a power series representation for spectral bands and for which all Theorems in section \ref{radius} hold. Consider an inclusion domain $D=\cup_{i=1}^N D_i$. Suppose we can surround each $D_i$ by a buffer layer $R_i$ so that each inclusion $D_i$ together with its buffer does not intersect with the any of the other buffered inclusions, i.e., $D_i\cup R_i\cap D_j\cup R_j=\emptyset$, $i\not=j$. The set of such inclusion domains will be called {\em buffered dispersions of inclusions}, see Figure \ref{plane2}.  We denote the operator norm for the Dirichlet to Neumann map for each inclusion by $\Vert DN_i\Vert$ and the Poincare constant for each buffer layer by $C_{R_i}$ and we state the following theorem.
\begin{theorem}{\rm Convergent power series for the Bloch and Neumann spectra for buffered dispersions of inclusions.}
All 2 dimensional photonic and $d$ dimensional acoustic crystals $(d=2,3)$ made from buffered dispersions with constants $C_{R_i}$ and $\Vert DN_i\Vert$ that satisfy 
\begin{equation}
\max_i\{(1+C_{R_i})\Vert DN_i\Vert\}<\infty,
\label{finitednpoincare}
\end{equation}
have Bloch and Neumann spectra described by convergent power series for real values of the contrast within a neighborhood of $z=1/k=0$.  The radii of convergence is controlled by the values $\Vert DN_i\Vert$  and  $C_{R_i}$, $i=1,\ldots,N$ and Theorems \ref{separationandraduus-alphanotzero} through \ref{errorestm} apply to these crystals.
\label{bufferconvergence}
\end{theorem}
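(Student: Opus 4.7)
The plan is to reduce Theorem \ref{bufferconvergence} to a single analytic input, namely a uniform lower bound $\mu^\ast>-1/2$ on all the electrostatic (and quasi-periodic) resonance eigenvalues $\mu_i$ associated with $D$. Once such a bound is in hand, formula \eqref{bdsonS} gives an upper bound $z^\ast=(\mu^\ast+1/2)/(\mu^\ast-1/2)<0$ on the pole set $S$, and the radii in \eqref{radiusalphanotzero} and in the analogous statements for $\alpha=0$ and for the Neumann case are strictly positive. Theorems \ref{separationandraduus-alphanotzero}, \ref{separationandraduus-alphazero}, \ref{separationandraduus-Neumann}, and \ref{errorestm} then apply verbatim, which is exactly what we need to conclude.

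The key step is therefore to prove that, for buffered dispersions satisfying \eqref{finitednpoincare}, one has $\mu_i\geq \mu^\ast$ for an explicit $\mu^\ast>-1/2$ depending only on $\max_i\{(1+C_{R_i})\|DN_i\|\}$. Recalling the variational characterization \eqref{threefourteen}, for any resonance eigenfunction $u\in W_3$ we have
\begin{equation}
\mu+\tfrac{1}{2}=\frac{\int_{Y\setminus D}|\nabla u|^2\,dx}{\int_Y|\nabla u|^2\,dx},
\label{muplushalf}
\end{equation}
so the task is to bound $\int_D|\nabla u|^2$ above by a multiple of $\int_{Y\setminus D}|\nabla u|^2$. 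Since $u$ is harmonic in each $D_i$ and in each buffer shell $R_i\subset Y\setminus D$, I would carry out, for each $i$ independently, the three-line chain
\begin{equation}
\int_{D_i}|\nabla u|^2\,dx \;=\; \langle u|_{\partial D_i},\, DN_i(u|_{\partial D_i})\rangle \;\leq\; \|DN_i\|\,\|u|_{\partial D_i}\|^2_{H^{1/2}(\partial D_i)} \;\leq\; (1+C_{R_i})\|DN_i\|\int_{R_i}|\nabla u|^2\,dx,
\label{cascade}
\end{equation}
where the first equality uses harmonicity and integration by parts, the second is the operator-norm bound on the Dirichlet-to-Neumann map, and the third combines the trace inequality on $\partial D_i$ with the Poincar\'e inequality on the shell $R_i$ (after subtracting the mean on $R_i$, which only costs a factor controlled by $C_{R_i}$). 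Summing over $i$ and using the disjointness $R_i\cap R_j=\emptyset$ gives
\begin{equation}
\int_D|\nabla u|^2\,dx \;\leq\; M\int_{Y\setminus D}|\nabla u|^2\,dx, \qquad M:=\max_i\{(1+C_{R_i})\|DN_i\|\},
\label{gradratio}
\end{equation}
so from \eqref{muplushalf} we obtain $\mu+\tfrac{1}{2}\geq (1+M)^{-1}$, and we set $\mu^\ast:=-\tfrac{1}{2}+(1+M)^{-1}>-\tfrac{1}{2}$.

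The identical argument goes through for the quasi-periodic source-free modes in $H^1_{\#}(\alpha,Y)$ since the buffer layer construction is $\alpha$-independent and the harmonic-extension/DtN estimate is purely local to each inclusion; indeed this is the strategy of \cite{RobertRobert1} for $\alpha\in Y^\star$. Hence the same $\mu^\ast$ serves uniformly for all $\alpha$, and the resulting $z^\ast$ plugs into \eqref{radiusalphanotzero} and its Neumann counterpart to produce positive radii $r^\ast$. The error estimates \eqref{errorestalpha}--\eqref{errorestNeumann} then follow from Theorem \ref{errorestm} without further work.

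The main obstacle is the middle inequality in \eqref{cascade}: one must be careful that the $H^{1/2}(\partial D_i)$ norm (rather than just an $L^2$ norm) is what the Dirichlet-to-Neumann map sees, and that the trace-plus-Poincar\'e step in the shell $R_i$ produces exactly the factor $(1+C_{R_i})$ on the $H^1$ seminorm, not the full $H^1$ norm. This is where the buffer hypothesis is essential: without the disjoint shell one cannot localize the trace estimate, and without harmonicity of $u$ in $R_i$ one would need an extra regularity argument. Provided the chain \eqref{cascade} is executed with the correct norm bookkeeping, everything else is a direct appeal to the theorems already proved in Section \ref{radius}.
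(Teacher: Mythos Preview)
Your proposal is correct and follows essentially the same route as the paper: reduce to a lower bound $\mu^\ast>-\tfrac12$ on the resonance spectrum, obtain it from the energy inequality $\int_D|\nabla u|^2\le M\int_{Y\setminus D}|\nabla u|^2$, and derive that inequality for buffered dispersions via the DtN/trace/Poincar\'e chain localized to each $D_i\cup R_i$. The paper packages this as Theorem~\ref{spectralboundonmu} (the energy inequality with parameter $\theta$ implies $\mu^\ast=\min\{1/2,\theta/2\}-1/2$) together with Theorem~\ref{pthetabuffered} (buffered dispersions satisfy the $\theta$-inequality with $\theta^{-1}=\max_i\{(1+C_{R_i})\|DN_i\|\}$), deferring both proofs to \cite{RobertRobert1}; your sketch of \eqref{cascade} is exactly the argument behind Theorem~\ref{pthetabuffered}, and your variational identity \eqref{muplushalf} plus \eqref{gradratio} gives a slightly sharper constant $\mu^\ast=-\tfrac12+(1+M)^{-1}$ than the paper's $\theta/2-\tfrac12$, but the mechanism is identical.
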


The theorem follows from an explicit condition on the inclusion geometry that guarantees a lower bound $\mu^\ast$ for  both  Neumann electrostatic resonance spectra and quasi-periodic resonance spectra. The lower bound  depends only upon geometry and is uniform in $\alpha$ for the quasi-periodic spectra. This lower bound provides a positive distance between the origion $z=0$ and the poles of $A^\alpha(z)$ and $A(z)$. The explicit condition is given by the following criterion.
\begin{theorem}
\label{spectralboundonmu}
Let $D \Subset Y$ be a union of simply connected sets (inclusions) $D_i$, $i=1,\ldots,L$  with $C^{1,\gamma}$ boundary.  Consider the  spectrum $\{\mu_i\}_{i\in \mathbb{N}}$ of $T$ restricted to $W_3$  for  $W_3\subset H^1_{\#}(\alpha, Y)$ or $W_3\subset \mathcal{H}$. For either case if there is a $\theta >0$ such that for all $u \in W_3$ 
\begin{equation}
\label{thetaineq}
\| \nabla u\|_{L^2(Y\setminus D)}^2 \geq \theta \| \nabla u \|_{L^2(D)}^2,
\end{equation}
then for $\rho = \min \{ \frac{1}{2}, \frac{\theta}{2} \}$  one has the lower bound
\begin{equation}
\label{thelowerbound}
\min_{i\in\mathbb{N}}\{ \mu_i\}\geq \mu^\ast=\rho-\frac{1}{2}>-\frac{1}{2}.
\end{equation}
\end{theorem}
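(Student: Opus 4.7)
The plan is to reduce the lower bound on $\mu_i$ to a one–parameter Rayleigh quotient estimate via the min–max principle and then carry out an elementary monotonicity argument. Since $T$ is compact and self-adjoint on $W_3$, every eigenvalue $\mu_i$ satisfies
\[
\mu_i \geq \inf_{\substack{u\in W_3 \\ u\neq 0}} \frac{\langle Tu,u\rangle}{\langle u,u\rangle},
\]
so it suffices to bound the Rayleigh quotient uniformly from below by $\rho-\tfrac12$.

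For $u\in W_3$, set $a:=\|\nabla u\|_{L^2(D)}^2$ and $b:=\|\nabla u\|_{L^2(Y\setminus D)}^2$. By the definitions \eqref{threefourteen} and \eqref{innerproduct},
\[
\langle Tu,u\rangle = \tfrac{1}{2}(b-a),\qquad \langle u,u\rangle = a+b.
\]
A direct rearrangement gives
\[
\frac{\langle Tu,u\rangle}{\langle u,u\rangle}+\frac{1}{2} \;=\; \frac{b}{a+b},
\]
so the task is reduced to establishing $b/(a+b)\geq\rho$. Note that $a=b=0$ would force $u$ to be a constant in $W_3$, hence identically zero (by the zero-mean condition in $\mathcal{H}$, or by quasi-periodicity when $\alpha\neq0$), which we exclude.

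Next I would use hypothesis \eqref{thetaineq}, which reads $b\geq \theta a$. The function $t\mapsto t/(a+t)$ is increasing on $[0,\infty)$, so
\[
\frac{b}{a+b}\;\geq\;\frac{\theta a}{a+\theta a}\;=\;\frac{\theta}{1+\theta}
\]
whenever $a>0$; if $a=0$ then $b/(a+b)=1$, which trivially dominates $\rho\leq 1/2$. It remains to verify $\theta/(1+\theta)\geq\rho=\min\{1/2,\theta/2\}$. I would split into the two cases: when $\theta\geq 1$ the inequality $\theta/(1+\theta)\geq 1/2$ is equivalent to $\theta\geq 1$; when $\theta<1$ the inequality $\theta/(1+\theta)\geq \theta/2$ is equivalent to $\theta\leq 1$. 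Both hold, giving $b/(a+b)\geq \rho$ in every case, hence $\langle Tu,u\rangle/\langle u,u\rangle\geq \rho-\tfrac12>-\tfrac12$.

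There is no real obstacle: the whole argument is a two-line Rayleigh quotient identity followed by an elementary case split. The only subtlety worth emphasizing in the write-up is that the hypothesis \eqref{thetaineq} must be assumed on the entire closed subspace $W_3$ (not merely on eigenfunctions), so that min–max legitimately transfers the bound from all trial functions to the spectrum; this is automatic here because $W_3$ is precisely the domain on which $T$ is defined and self-adjoint.
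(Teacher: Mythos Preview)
Your argument is correct. The paper itself does not give a proof of this theorem; it simply cites \cite{RobertRobert1} for the quasi-periodic case and states that the Neumann case ``follows identical lines.'' Your Rayleigh quotient computation---writing $\langle Tu,u\rangle/\langle u,u\rangle + \tfrac{1}{2} = b/(a+b)$ and then bounding this below by $\theta/(1+\theta)\geq\min\{1/2,\theta/2\}$---is the natural and presumably intended argument, and it handles both cases $W_3\subset H^1_{\#}(\alpha,Y)$ and $W_3\subset\mathcal{H}$ uniformly.
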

This theorem is proved for quasi-periodic resonances in  $W_3=H^1_{\#}(\alpha, Y)$, in \cite{RobertRobert1} and its proof follows identical lines for the Neumann electrostatic resonances in $W_3=\mathcal{H}$.

The parameter $\theta$ is a geometric descriptor for $D$  and we define a wide class of crystal geometries to which Theorems \ref{separationandraduus-alphanotzero} through \ref{errorestm} apply. 

\begin{definition}
The class of crystal geometries characterized by inclusions $D$ such that \eqref{spectralboundonmu} holds for a fixed positive value of $\theta$ is denoted by $P_\theta$. 
\label{Ptheta}
\end{definition}

And we have the corollary:

\begin{corollary}
\label{theta}
Theorems \ref{separationandraduus-alphanotzero} through \ref{errorestm}  hold for every inclusion domain $D$ belonging to $P_\theta$.
\end{corollary}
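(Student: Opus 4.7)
The strategy is that Corollary \ref{theta} is essentially a bookkeeping consequence of combining Theorem \ref{spectralboundonmu} with the three main results \ref{separationandraduus-alphanotzero}, \ref{separationandraduus-alphazero}, \ref{separationandraduus-Neumann}, and the error estimate Theorem \ref{errorestm}. I would first fix a crystal geometry $D \in P_\theta$ and record that, by Definition \ref{Ptheta}, the energy inequality \eqref{thetaineq} holds with a single positive $\theta$ for every $u$ in $W_3$, and that this $W_3$ may be taken either as the quasi-periodic subspace of $H^1_{\#}(\alpha,Y)$ for any $\alpha \in Y^\ast$, or as the subspace of $\mathcal{H}$ relevant to the Neumann problem. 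Notably, $\theta$ depends only on the inclusion geometry $D$ and not on $\alpha$.

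Next I would invoke Theorem \ref{spectralboundonmu} with this fixed $\theta$ to obtain $\rho = \min\{1/2,\theta/2\}$ and the uniform lower bound
\begin{equation*}
\min_i \{\mu_i\} \geq \mu^\ast = \rho - \tfrac{1}{2} > -\tfrac{1}{2}
\end{equation*}
on the electrostatic/quasi-periodic resonance spectra. Since Theorem \ref{spectralboundonmu} is proved in \cite{RobertRobert1} for the quasi-periodic case and by the same argument for the Neumann case (as noted in the paper), the same $\mu^\ast$ serves simultaneously for all $\alpha \in Y^\ast$ and for the Neumann operator. Setting
\begin{equation*}
z^\ast = \frac{\mu^\ast + 1/2}{\mu^\ast - 1/2} < 0,
\end{equation*}
one obtains the required upper bound on the pole set $S$, namely $\max_i\{z_i\} \leq z^\ast$, as in \eqref{bdsonS} and \eqref{bdsonSS}.

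With $\mu^\ast$ and $z^\ast$ in hand, the hypotheses \eqref{upperonS}, \eqref{upperonSzero}, and \eqref{upperonSNeumann} that appear in Theorems \ref{separationandraduus-alphanotzero}, \ref{separationandraduus-alphazero}, and \ref{separationandraduus-Neumann} respectively are verified, and the conclusions of those theorems (separation of spectra, holomorphy of the spectral projection $P(z)$, isomorphism of the invariant subspaces, and uniform convergence of the eigenvalue series on $|z| < r^\ast$) apply verbatim to $D$. Finally, the error estimates in Theorem \ref{errorestm} are derived in section \ref{derivation} directly from these convergence statements together with the contour integral representation \eqref{cauchyformula}, \eqref{cauchyformula6}; they therefore also inherit validity for every $D \in P_\theta$.

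The only nontrivial point — which I would flag as the step to double-check rather than a genuine obstacle — is the uniformity in $\alpha$ of the lower bound $\mu^\ast$ for the quasi-periodic resonances. This uniformity is what allows the radii $r^\ast$ of \eqref{radiusalphanotzero} and \eqref{radiusalphazero} to be taken independent of the Bloch parameter, which is essential when the convergence is later used in band-gap arguments that optimize over $\alpha \in Y^\ast$. The key observation is that the inequality \eqref{thetaineq} is formulated purely in terms of the geometry of $D$, so the constant $\theta$ carries no $\alpha$-dependence, and the bound on $\mu^\ast$ extracted from Theorem \ref{spectralboundonmu} is consequently $\alpha$-free as well.
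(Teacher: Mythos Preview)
Your proposal is correct and matches the paper's reasoning: the paper states Corollary \ref{theta} as an immediate consequence of Definition \ref{Ptheta} and Theorem \ref{spectralboundonmu} without a separate proof, and your write-up simply unpacks that implication in detail. The uniformity-in-$\alpha$ point you flag is exactly the content the paper defers to \cite{RobertRobert1} and to the remark that the Neumann case follows by the same argument.
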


The convergent series representation for buffered dispersions of inclusions now follows from the theorem.
\begin{theorem}
Suppose there is a $\theta>0$ for which
\begin{eqnarray}
\theta^{-1}\geq\max_i\{(1+C_{R_i})\Vert DN_i\Vert\}.
\label{setcriteria}
\end{eqnarray}
then the buffered geometry lies in $P_\theta$.
\label{pthetabuffered}
\end{theorem}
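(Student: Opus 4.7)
The plan is to verify the energy inequality \eqref{thetaineq} directly, working one inclusion at a time and then summing. Fix an arbitrary $u\in W_3$. Recall that elements of $W_3$ are harmonic separately in $D$ and in $Y\setminus D$; since each buffer layer $R_i$ lies in $Y\setminus D$, the restrictions $u\mid_{D_i}$ and $u\mid_{R_i}$ are both harmonic. The strategy is to bound $\|\nabla u\|_{L^2(D_i)}^2$ by the buffer energy $\|\nabla u\|_{L^2(R_i)}^2$ through the inclusion boundary, then exploit the disjointness of the buffered inclusions to sum.

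For each $i$, integration by parts together with the Gauss identity $\int_{\partial D_i}\partial_n u\,ds=0$ (valid since $u$ is harmonic in $D_i$) gives
\begin{equation*}
\|\nabla u\|_{L^2(D_i)}^2 \;=\;\int_{\partial D_i}(u-c_i)\,\partial_n u\,ds
\end{equation*}
for any constant $c_i$; I will take $c_i$ to be the average of $u$ over the buffer $R_i$. Duality and the definition of the Dirichlet-to-Neumann map on $D_i$ then yield
\begin{equation*}
\|\nabla u\|_{L^2(D_i)}^2 \;\leq\; \|DN_i\|\,\|u-c_i\|_{H^{1/2}(\partial D_i)}^2.
\end{equation*}
The next step is a trace/Poincar\'e bound that passes from the $H^{1/2}$ norm on $\partial D_i$ to the $L^2$ norm of $\nabla u$ in $R_i$: since $u$ is harmonic in $R_i$ and $c_i$ is the mean of $u$ over $R_i$, the Poincar\'e inequality on $R_i$ (with constant $C_{R_i}$) combined with the standard trace estimate gives
\begin{equation*}
\|u-c_i\|_{H^{1/2}(\partial D_i)}^2 \;\leq\; (1+C_{R_i})\,\|\nabla u\|_{L^2(R_i)}^2.
\end{equation*}

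Combining the two estimates and invoking the hypothesis \eqref{setcriteria},
\begin{equation*}
\|\nabla u\|_{L^2(D_i)}^2 \;\leq\; (1+C_{R_i})\|DN_i\|\,\|\nabla u\|_{L^2(R_i)}^2 \;\leq\; \theta^{-1}\,\|\nabla u\|_{L^2(R_i)}^2.
\end{equation*}
Summing over $i=1,\dots,N$ and using that the buffered inclusions $D_i\cup R_i$ are pairwise disjoint, hence $\bigsqcup_i R_i\subset Y\setminus D$, I obtain
\begin{equation*}
\|\nabla u\|_{L^2(D)}^2 \;=\;\sum_i \|\nabla u\|_{L^2(D_i)}^2 \;\leq\;\theta^{-1}\sum_i\|\nabla u\|_{L^2(R_i)}^2\;\leq\;\theta^{-1}\|\nabla u\|_{L^2(Y\setminus D)}^2,
\end{equation*}
which is precisely the $P_\theta$ bound \eqref{thetaineq}. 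Since the same argument applies equally well to $W_3\subset\mathcal{H}$ and $W_3\subset H^1_\#(\alpha,Y)$ (both consist of functions harmonic off $\partial D$ and the local estimates are insensitive to the global boundary conditions on $\partial Y$), Theorem~\ref{spectralboundonmu} then delivers the uniform lower bound on the resonance eigenvalues, placing the geometry in $P_\theta$.

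The main obstacle is the second displayed estimate: one must show that the Poincar\'e constant $C_{R_i}$ of the buffer controls the $H^{1/2}$ trace norm on $\partial D_i$ with the sharp multiplicative factor $(1+C_{R_i})$. This requires choosing the subtracted constant $c_i$ to match both the Poincar\'e and the trace step, and using the harmonicity of $u$ in $R_i$ to avoid paying a separate trace constant. Once that geometric constant is pinned down, the rest is bookkeeping: the DN-map estimate is standard and the summation step is immediate from the disjointness built into the definition of a buffered dispersion.
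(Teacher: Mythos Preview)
The paper does not actually prove this theorem: it defers to \cite{RobertRobert1} with the remark that ``an identical proof can be used'' for the Neumann electrostatic case. So there is no in-paper argument to compare against.

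That said, your outline is exactly the argument the form of the hypothesis advertises: the product $(1+C_{R_i})\|DN_i\|$ is meant to be read as (Dirichlet-to-Neumann on $D_i$) composed with (trace/Poincar\'e on the buffer $R_i$), and your inclusion-by-inclusion decomposition, subtraction of a buffer-mean constant, and final summation over disjoint buffers is the natural and almost certainly the intended route. The one place you correctly flag as incomplete---getting the trace bound $\|u-c_i\|_{H^{1/2}(\partial D_i)}^2\le(1+C_{R_i})\|\nabla u\|_{L^2(R_i)}^2$ with \emph{that} constant---is not a flaw in the strategy but a question of conventions: the precise constant depends on how $\|DN_i\|$ (which operator norm, which $H^{1/2}$ norm) and $C_{R_i}$ (which Poincar\'e inequality on $R_i$) are normalized in \cite{RobertRobert1}. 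With those definitions in hand the step is routine; without them you cannot do better than the schematic inequality you wrote, and that is fine for the present purpose.
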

This theorem is established in \cite{RobertRobert1} for  quasi-periodic spectra and an identical proof can be used to prove it for the Neumann electrostatic spectra discussed here.

\section{Radius of Convergence and Separation of Spectra  for Disks}
\label{radiusmultiplescatterers}
We now consider both Neumann and Bloch spectra for crystals discussed in the introduction with each  period cell containing an identical  distribution of $N$ disks $D_i$, $i=1,\ldots, N$ of radius $a$. We suppose that the  smallest distance separating the disks is $t_d>0$. The buffer layers $R_i$ are annuli with inner radii $a$ and outer radii $b=a+t$ where $t\leq t_d/2$ and  is chosen so that the collection of buffered disks lie within the period cell. For this case a suitable constant $\theta$ is computed in \cite{Bruno} and is given by

\begin{equation}
\label{thetaofb}
\theta = \frac{b^2 - a^2}{b^2 + a^2}.
\end{equation}
Since $a<b$, we have that
\begin{equation}
\label{thetabounds}
0 < \theta <1.
\end{equation}
We also note that when $D_i$ are discs of radius $a>0$, we can recover an explicit formula for $d$ from equations \ref{dist}, \ref{dforperiodic}, and \ref{dforNeumann}.  In particular, any eigenvalue $\beta_j^{\alpha}(0)$ of $-\Delta_D^{-1}$, for $\alpha\not=0$. may be written
\begin{equation}
\label{dirichletvaluedisc}
\beta_j^{\alpha}(0) = \left (\frac{\eta_{n,k}}{a} \right )^{-2} \text{,}
\end{equation}
where $\eta_{n,k}$ is the $k$th zero of the $n$th Bessel function $J_n(r)$.  Let $\tilde{\eta}$ be the minimizer of
\begin{equation}
\min \limits_{m,j \in \mathbb{N}} |(\eta_{n,k})^{-2} - (\eta_{m,j})^{-2}|.
\end{equation} 
Then we may choose $\Gamma$ from section \ref{radius} so that
\begin{equation}
\hat{d} = \frac{1}{2}|(\frac{a}{\eta_{n,k}})^2 - (\frac{a}{\tilde{\eta}})^2|.
\label{dDirichlet}
\end{equation}
We apply the explicit form for $\theta$ to obtain a formula for $r^*$ in terms of $a$,$b$, $d$ given above, and $\alpha$.  Recall that $\rho$ from Theorem \ref{spectralboundonmu} is given by $\rho = \min \{\frac{1}{2}, \frac{\theta}{2} \}$.  In light of inequality \eqref{thetabounds}, we have that
\begin{equation}
\label{rhoforab}
\rho = \frac{1}{2} \left ( \frac{b^2-a^2}{b^2+a^2} \right ),
\end{equation}
and we calculate the lower bound $\mu^\ast$:
\begin{equation}
\label{lowermuforab}
\mu^\ast = \rho - \frac{1}{2} = -\frac{a^2}{b^2+a^2}.
\end{equation}
Recalling that
$$z^*= \frac{\mu^\ast + 1/2}{1/2-\mu^\ast} \text{ , }$$
we obtain an explicit radius of convergence $r^*$ in terms of $a$, $b$, $\eta_{n,k}$, $\tilde{\eta}$, and $\alpha$ for $\alpha \neq 0$,
\begin{equation}
\label{ralphaforab}
r^* = \frac{|\alpha|^2|(\frac{a}{\eta_{n,k}})^2 - (\frac{a}{\tilde{\eta}})^2|(b^2-a^2)}{4(b^2+a^2) + |\alpha|^2|(\frac{a}{\eta_{n,k}})^2 - (\frac{a}{\tilde{\eta}})^2|(b^2+3a^2)}.
\end{equation}

When $\alpha=0$  Theorem \ref{equiv22} shows that the limit spectrum consists of a component given by the roots $\nu_{0k}$ of
\begin{eqnarray}
1=N\nu\sum_{k\in\mathbb{N}}\frac{a_{0k}^2}{\nu-(\eta_{0k}/a)^2},
\label{extraspec}
\end{eqnarray}
where $a_{0k}=\int_D u_{0k}\,dx$ are averages of the rotationally symmetric normalized eigenfunctions $u_{0k}$ given by
\begin{eqnarray}
u_{0k}=J_0(r\eta_{0k}/a)/(a\sqrt{\pi}J_1(\eta_{0k})).
\label{rotintavg}
\end{eqnarray}
The other component is comprised of the eigenvalues exclusively associated with mean zero eigenfunctions. The collection of these eigenvalues is given by $\{\cup_{n\not=0,k}(\eta_{nk}/a)^2\}$
The elements $\lambda_{nk}$ of the spectrum $\sigma(A^0(0))$ are given by the set $\{\cup_{n\not=0,k}(\eta_{nk}/a)^2\}\cup\{\cup_k\nu_{0k}\}$. Now fix an element $\lambda_{nk}$ and  let $\tilde{\eta}$ be the minimizer of
\begin{equation}
\min \limits_{m,j \in \mathbb{N}} |(\lambda_{n,k})^{-1} - (\lambda_{m,j})^{-1}|.
\end{equation} 
Then as before we may choose $\Gamma$ from section \ref{radius} so that
\begin{equation}
\hat{d} = \frac{1}{2}|({\lambda_{n,k}}^{-1} - {\tilde{\eta}}^{-1}|
\end{equation}
and in terms of $a$, $b$, $\lambda_{n,k}$, and $\tilde{\eta}$ for $\alpha = 0$:
\begin{equation}
\label{ralphaforabzero}
r^* = \frac{\pi^2|(\lambda_{n,k})^{-1} - {\tilde{\eta}}^{-1}|(b^2-a^2)}{(b^2+a^2) + \pi^2|({\lambda_{n,k}})^{-1} - {\tilde{\eta}}^{-1}|(b^2+3a^2)}.
\end{equation}

Theorem \ref{equiv2} shows that the limit spectrum for the Neumann eigenvalue problem also consists of a component given by the roots $\nu_{0k}$ of \eqref{extraspec}. The elements $\lambda_{nk}$ of the spectrum $\sigma(A(0))$ are the same as for the limit periodic case $\sigma(A^0(0))$ and given by the set $\{\cup_{n\not=0,k}(\eta_{nk}/a)^2\}\cup\{\cup_k\nu_{0k}\}$. Proceeding as before we may choose $\Gamma$ from section \ref{radius} so that
\begin{equation}
\hat{d} = \frac{1}{2}|({\lambda_{n,k}}^{-1} - {\tilde{\eta}}^{-1}|
\label{dNeumann}
\end{equation}
and $r^\ast$  is  given by 
\begin{equation}
\label{ralphaforneumann}
r^* = \frac{\pi^2|(\lambda_{n,k})^{-1} - {\tilde{\eta}}^{-1}|(b^2-a^2)}{4(b^2+a^2) + \pi^2|({\lambda_{n,k}})^{-1} - {\tilde{\eta}}^{-1}|(b^2+3a^2)}.
\end{equation}

The collection of suspensions of $N$ buffered disks is an example of a class of buffered inclusion geometries and collecting results we have the following:

\begin{corollary}
\label{theta2}
For every suspension of buffered disks with $\theta$ given by  \eqref{thetaofb}: Theorem \ref{separationandraduus-alphanotzero} holds with $r^*$ given by \eqref{ralphaforab} for $\alpha\in Y^\star$, $\alpha\not=0$, Theorem \ref{separationandraduus-alphazero} holds with $r^*$ given by \eqref{ralphaforabzero} for $\alpha=0$, and Theorem \ref{separationandraduus-Neumann} holds with $r^\ast$ given by \eqref{ralphaforabzero}. Moreover Theorem \ref{errorestimatesforexpansions} part one holds with $r^\ast$ given by \eqref{ralphaforab} and parts two and three hold for $r^\ast$ given by \eqref{ralphaforabzero} and \eqref{ralphaforneumann} respectively
\end{corollary}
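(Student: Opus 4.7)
The plan is to show that the explicit radii of convergence \eqref{ralphaforab}, \eqref{ralphaforabzero}, and \eqref{ralphaforneumann} are exactly what one obtains by specializing the general radius formulas \eqref{radiusalphanotzero}, \eqref{radiusalphazero}, and the analogous Neumann formula (the one displayed just above Theorem \ref{separationandraduus-Neumann}) to the disk geometry, and then to invoke the relevant theorems directly. The needed inputs are already assembled above: (i) a suspension of $N$ buffered disks is a buffered dispersion of inclusions in the sense of Section \ref{radiusgeneralshape}, and the optimal $\theta$ for the energy inequality \eqref{thetaineq} has been recorded in \eqref{thetaofb} (following the computation of \cite{Bruno}); (ii) Theorem \ref{spectralboundonmu} then supplies the uniform lower bound $\mu^\ast$ on the source-free resonance spectrum; (iii) the distance $\hat d$ to the limiting spectrum is recorded in \eqref{dDirichlet} and \eqref{dNeumann}.

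First I would verify that buffered disks belong to the class $P_\theta$. Since $a<b$ we have $0<\theta<1$ by \eqref{thetabounds}, so the constant $\rho=\min\{1/2,\theta/2\}$ of Theorem \ref{spectralboundonmu} equals $\theta/2=(b^2-a^2)/(2(b^2+a^2))$, giving \eqref{rhoforab} and hence \eqref{lowermuforab}: $\mu^\ast=-a^2/(b^2+a^2)$. Theorem \ref{spectralboundonmu} then bounds every Neumann and quasi-periodic electrostatic resonance eigenvalue from below by $\mu^\ast$ uniformly in $\alpha\in Y^\star$, so the hypotheses \eqref{upperonS}, \eqref{upperonSzero}, \eqref{upperonSNeumann} of Theorems \ref{separationandraduus-alphanotzero}, \ref{separationandraduus-alphazero}, \ref{separationandraduus-Neumann} are all met. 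In particular all three theorems apply to each suspension of buffered disks.

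Next I would carry out the algebra converting the general radius formulas into the explicit disk formulas. A short calculation from $\mu^\ast=-a^2/(b^2+a^2)$ gives
\begin{equation*}
\tfrac{1}{2}-\mu^\ast=\frac{b^2+3a^2}{2(b^2+a^2)},\qquad \tfrac{1}{2}+\mu^\ast=\frac{b^2-a^2}{2(b^2+a^2)},\qquad |z^\ast|=\frac{b^2-a^2}{b^2+3a^2}.
\end{equation*}
Substituting these expressions together with $\hat d=\tfrac{1}{2}|(a/\eta_{n,k})^2-(a/\tilde\eta)^2|$ into \eqref{radiusalphanotzero} and multiplying numerator and denominator by $2$ reproduces \eqref{ralphaforab} for $\alpha\ne 0$. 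The same substitution in the formula just preceding Theorem \ref{separationandraduus-alphazero}, with $\hat d=\tfrac{1}{2}|\lambda_{n,k}^{-1}-\tilde\eta^{-1}|$, produces \eqref{ralphaforabzero}; and the identical substitution in the Neumann radius formula just above Theorem \ref{separationandraduus-Neumann} yields \eqref{ralphaforneumann}. (The only bookkeeping detail is that the $\alpha=0$ and Neumann formulas differ from the $\alpha\ne 0$ one by the replacement of $|\alpha|^2$ with the Poincaré-type constants $4\pi^2$ and $\pi^2$ already present in the statements.) With these radii in hand, parts 1--4 of each of Theorems \ref{separationandraduus-alphanotzero}, \ref{separationandraduus-alphazero}, and \ref{separationandraduus-Neumann} apply verbatim, and the error estimates in parts 1, 2, 3 of Theorem \ref{errorestimatesforexpansions} follow directly by invoking that theorem with the corresponding $r^\ast$.

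There is no real obstacle here: once $\theta$, $\mu^\ast$, $z^\ast$, and $\hat d$ have been written down explicitly for disks, the proof reduces to routine algebraic substitution in the already-proved general theorems. The only point requiring modest care is checking the factor-of-two bookkeeping coming from the definition $\hat d=\tfrac{1}{2}\min|\cdot|$ used in \eqref{dDirichlet}--\eqref{dNeumann}, so that the denominators in \eqref{ralphaforab}, \eqref{ralphaforabzero}, and \eqref{ralphaforneumann} reproduce the $4(b^2+a^2)$ and $(b^2+a^2)$ coefficients exactly.
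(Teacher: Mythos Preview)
Your proposal is correct and is exactly what the paper intends: the corollary is stated in the paper immediately after the explicit computations of $\theta$, $\mu^\ast$, $z^\ast$, $\hat d$, and the three radii, with the only justification being the phrase ``collecting results we have the following.'' Your write-up simply makes that collection explicit---verifying membership in $P_\theta$ via Theorem \ref{spectralboundonmu}, computing $\tfrac12\pm\mu^\ast$ and $|z^\ast|$ from \eqref{lowermuforab}, and substituting into the general radius formulas---which is precisely the intended (but unwritten) argument.
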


\section{Opening band gaps and persistence of pass bands for $k>1$} 
\label{bandgappassband}

We apply the characterization of $\sigma(A(0))$ given by Theorem \ref{equiv2} together with \eqref{relationbeteigenvals} to recover the high contrast limit of the Neumann spectrum given by
\begin{equation}
 \sigma_N=\{{\delta'_j}\}_{j\in\mathbb{N}}\cup\{{\nu_j}\}_{j\in\mathbb{N}},
\label{Neumanlimitspectra}
\end{equation}
where $\delta'_j$ is the part of the Dirichlet spectra for  $D$ associated with mean zero eigenfunctions and $\nu_i$ are the roots of the spectral function \eqref{roots}. This is precisely the high contrast Numann spectrum described in \cite{HempelLienau}. In what follows we  do not distinguish between the two component parts of the spectrum and write elements of $\sigma_N$ as ${\nu}_j$, $j\in\mathbb{N}$. The Dirichlet spectrum is given by $\sigma(-\Delta_D)=\{{\delta'_j}\}_{j\in\mathbb{N}}\cup\{{\delta^\ast_j}\}_{j\in\mathbb{N}}$ where $\delta_j^\ast$ are Dirichlet eigenvalues associated with eigenfunctions with nonzero mean.  The relation between $\sigma_N$ and $\sigma(-\Delta_D)$ is given by the following theorem. 

\begin{theorem}{\rm Strict interlacing of spectra \cite{HempelLienau}}\\
\label{interlacing}
Given $\delta_j^\ast\in\sigma(-\Delta_D)$ and if $\delta_j^\ast$ is simple then there exist adjacent elements $\nu_j<\nu_{j+1}$ ordered by min-max belonging to $\sigma_N$ such that
\begin{equation}
\nu_j<\delta_j^\ast<\nu_{j+1}.
\label{interlacingspectum}
\end{equation}
\end{theorem}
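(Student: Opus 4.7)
The plan is to deduce the interlacing from the explicit description of $\sigma_N$ in \eqref{Neumanlimitspectra} together with a sign analysis of the rational spectral function $S(\nu)$ from \eqref{roots}. Recall that $\sigma_N = \{\delta_k'\}_{k\in\mathbb{N}} \cup \{\nu_k\}_{k\in\mathbb{N}}$, where the $\delta_k'$ are Dirichlet eigenvalues with mean-zero eigenfunctions and the $\nu_k$ are the positive roots of $S$. The function $S$ has simple poles exactly at the mean-nonzero Dirichlet eigenvalues $\delta_\ell^\ast$, with residues proportional to $a_\ell^2 = (\int_D \psi_\ell\, dx)^2 > 0$, and is continuous elsewhere on the positive real axis.

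First I would verify that $\delta_j^\ast \notin \sigma_N$. Simplicity of $\delta_j^\ast$ together with its membership in the mean-nonzero family implies that no mean-zero Dirichlet eigenfunction corresponds to it, so $\delta_j^\ast \notin \{\delta_k'\}$. Since $a_j > 0$, the function $S$ has a genuine simple pole at $\delta_j^\ast$, so $\delta_j^\ast$ is also not a root of $S$. This establishes that the two inequalities in the conclusion are strict whenever $\nu_j$ and $\nu_{j+1}$ exist.

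Next, by isolating the singular summand,
\begin{equation*}
S(\nu) = \frac{\nu\, a_j^2}{\nu - \delta_j^\ast} + R_j(\nu),
\end{equation*}
where $R_j$ is continuous in a neighborhood of $\delta_j^\ast$, the first term forces $S(\nu)\to +\infty$ as $\nu\downarrow\delta_j^\ast$ and $S(\nu)\to -\infty$ as $\nu\uparrow\delta_j^\ast$. Applying the same one-sided limits at the neighbouring poles $\delta_{j\pm 1}^\ast$ and invoking the intermediate value theorem, $S$ admits at least one zero in each of the open intervals $(\delta_{j-1}^\ast,\delta_j^\ast)$ and $(\delta_j^\ast, \delta_{j+1}^\ast)$. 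These zeros are elements of $\sigma_N$ strictly on either side of $\delta_j^\ast$. Since both $\{\delta_k'\}$ and $\{\nu_k\}$ are discrete sequences with no finite accumulation point, the largest element of $\sigma_N$ strictly below $\delta_j^\ast$ and the smallest strictly above both exist, and after relabelling by min-max they are the $\nu_j,\nu_{j+1}$ of the statement.

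The main obstacle is bookkeeping rather than analysis: the min-max enumeration interleaves the two constituent families of $\sigma_N$, so the index $j$ on $\nu_j$ records position within $\sigma_N$ rather than within either sub-sequence, and one must check that an intervening $\delta_k' \in (\delta_j^\ast,\delta_{j+1}^\ast)$ does not violate the adjacency claim (it does not, since such a $\delta_k'$ is itself in $\sigma_N$ and merely takes the role of $\nu_{j+1}$). A boundary case at the lowest pole, where no pole of $S$ lies strictly below $\delta_1^\ast$, must be handled by identifying an appropriate $\delta_k'$ below or by adjusting the indexing convention; this is combinatorial rather than analytic, given the pole–residue structure of $S$ established above.
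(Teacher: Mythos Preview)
The paper does not prove this theorem; it is quoted from \cite{HempelLienau} and then used as input to Theorems~\ref{stopbandgeneral} and~\ref{passbandgeneral}. So there is no ``paper's own proof'' to compare against, and your sketch has to stand on its own.

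Your pole-and-sign analysis of $S$ is sound as far as it goes: simplicity of $\delta_j^\ast$ indeed forces $\delta_j^\ast\notin\sigma_N$, and the one-sided blow-ups of $S$ at each pole do produce, via the intermediate value theorem, at least one zero of $S$ in every open interval between consecutive $\delta_\ell^\ast$. That part is fine.

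The genuine gap is the step you label ``bookkeeping.'' The statement is not merely that \emph{some} pair of adjacent elements of $\sigma_N$ brackets $\delta_j^\ast$; it is that the $j$-th and $(j{+}1)$-th elements of $\sigma_N$ in the min-max enumeration do so, with the \emph{same} index $j$ as the Dirichlet eigenvalue. This index alignment is exactly what is used in the proof of Theorem~\ref{stopbandgeneral} (see the inequalities $\lambda_m(k,\alpha)\le\delta_j^\ast$ for $m\le j$ and $\nu_{j+1}(k)\le\lambda_m(k,\alpha)$ for $m\ge j{+}1$). Your argument never establishes it: from ``there exist adjacent elements on either side'' you cannot conclude they are the $j$-th and $(j{+}1)$-th without a counting argument. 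To close this via the spectral function you would need (i) strict monotonicity of $S$ on each inter-pole interval, so that there is \emph{exactly} one root per interval (a short computation gives $S'(\nu)=-\sum_k a_k^2\delta_k^\ast/(\nu-\delta_k^\ast)^2<0$), (ii) the observation that $S<0$ on $(0,\delta_{(1)}^\ast)$ so there is no root below the first pole, and then (iii) a count matching the number of roots of $S$ plus the number of $\delta_k'$ below $\delta_j^\ast$ against $j$. Alternatively, and this is closer to the Hempel--Lienau argument, one can obtain the correctly indexed interlacing directly from a min-max comparison, since the limit problem on $W_2$ differs from the Dirichlet problem on $H^1_0(D)$ by a rank-one constraint (the subtraction of the mean in \eqref{definitionW2periodic}).

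Your handling of the lowest pole is also not merely combinatorial: since $S<-1$ on $(0,\delta_{(1)}^\ast)$ and the ground-state Dirichlet eigenfunction has nonzero mean, there is no element of $\sigma_N$ below $\delta_1^\ast$ unless one adopts a convention such as $\nu_0=0$; this has to be reconciled with the indexing, not waved away.
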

Theorem \ref{interlacing} insures the existence of a band gap for sufficiently large contrast $k$.
We give an explicit condition on the contrast $k$ that is sufficient to open a band gap in the vicinity of $\delta_{j}^\ast$ together with explicit formulas describing its location and bandwidth. 

\begin{theorem}{Opening a band gap}\\
Consider any crystal geometry belonging to the class $P_\theta$. Suppose $\delta_{j}^\ast$ is simple  then $\delta_j^\ast<\nu_{j+1}$. Set 
$$d_j=\frac{1}{2}dist\left(\{\nu_{j+1}^{-1}\},\sigma_N\setminus\{\nu_{j+1}^{-1}\}\right)$$   and 
\begin{equation}
\label{upperrgeneral-j}
\overline{r}_j =\frac{\pi^2d_j|z^\ast|}{\frac{1}{1/2-\mu^\ast}+\pi^2d_j}.
\end{equation}
Then one has the band gap
\begin{equation}
\sigma(L_k)\cap\left(\delta_{j}^\ast,\nu_{j+1}(1-\frac{\nu_{j+1}d_j}{k\overline{r}_j-1})\right)=\emptyset
\label{explicitgapgeneral}
\end{equation}
if
\begin{equation}
k>\overline{k}_j=\overline{r}_j^{-1}\left(1+\frac{d_j\nu_{j+1}}{1-\frac{\delta_j^\ast}{\nu_{j+1}}}\right).
\label{explicitbandgapgeneral}
\end{equation}
\label{stopbandgeneral}
\end{theorem}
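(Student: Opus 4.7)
Let $\omega_+^2 := \nu_{j+1}\bigl(1 - \nu_{j+1} d_j / (k\overline{r}_j - 1)\bigr)$ denote the proposed upper edge of the gap. The plan is to bound the band edges on either side of the interval $(\delta_j^\ast, \omega_+^2)$ using the tools developed earlier in the paper, and then to identify the contrast threshold at which the gap becomes non-empty.

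First, I would apply Theorem \ref{errorestimatesforexpansions} part 3 at truncation order $p = 0$ to the Neumann eigenvalue group whose $z=0$ limit is $\nu_{j+1}^{-1}$. Because the hypothesis places the crystal in $P_\theta$, Theorem \ref{separationandraduus-Neumann} applies and the radius of convergence equals $\overline{r}_j$ with distance parameter $\hat d = d_j$. Substituting $z = 1/k$ for $k > 1/\overline{r}_j$ yields $|\nu_{j+1}(k)^{-1} - \nu_{j+1}^{-1}| \leq d_j/(k\overline{r}_j - 1)$. Elementary algebra (using $(1-y)(1+y)\leq 1$ with $y = \nu_{j+1}d_j/(k\overline{r}_j - 1)$) inverts this bound to $\nu_{j+1}(k) \geq \omega_+^2$. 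The Neumann--Bloch comparison \eqref{minmax} then gives $a_{j+1} \geq \nu_{j+1}(k) \geq \omega_+^2$, so no band of index $\geq j+1$ intrudes below $\omega_+^2$.

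Second, I would show $b_j \leq \delta_j^\ast$ via monotonicity and the identification of the limit spectrum. Since the quadratic form \eqref{quadform} is non-decreasing in $k$, each $\lambda_j(k,\alpha)$ is non-decreasing in $k$, hence $\lambda_j(k,\alpha) \leq \lim_{k'\to\infty}\lambda_j(k',\alpha)$. By the analysis of Section \ref{limitspeczero} (Theorems \ref{equiv12} and \ref{equiv22} together with \eqref{relationbeteigenvals}), this limit belongs to $\sigma(-\Delta_D)$ for $\alpha \neq 0$ and to $\sigma_N$ for $\alpha = 0$. The strict interlacing of Theorem \ref{interlacing}, combined with the matching of the min-max labeling of Bloch eigenvalues to the eigenvalue groups of the limit operators $A^\alpha(0)$, shows this limit is at most $\delta_j^\ast$. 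Consequently $b_j = \max_\alpha \lambda_j(k,\alpha) \leq \delta_j^\ast$, so no band of index $\leq j$ intrudes above $\delta_j^\ast$.

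Third, the interval $(\delta_j^\ast, \omega_+^2)$ is non-empty exactly when $\omega_+^2 > \delta_j^\ast$. Rearranging this inequality for $k$ produces precisely the threshold \eqref{explicitbandgapgeneral}, namely $k > \overline{k}_j$. Combining this with Steps 1 and 2 yields the gap \eqref{explicitgapgeneral}. The main obstacle is the second step: one must pair up the min-max labels of Bloch eigenvalues with the eigenvalue groups of $A^\alpha(0)$ uniformly in $\alpha \in Y^\star$, and verify via the strict interlacing of Theorem \ref{interlacing} that the $j$-th eigenvalue group has limit bounded above by $\delta_j^\ast$. The monotonicity and convergence ingredients are standard consequences of the quadratic form framework, but this consistent labeling across all quasi-momenta is the technical heart of the argument.
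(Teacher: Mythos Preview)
Your proposal is correct and follows essentially the same route as the paper: the Neumann error estimate (Theorem~\ref{errorestimatesforexpansions}, part~3, at $p=0$) for the eigenvalue group at $\nu_{j+1}^{-1}$, the comparison \eqref{minmax}, the upper bound $\lambda_j(k,\alpha)\le\delta_j^\ast$, and the threshold computation all match the paper's argument.  Your inversion of the $\beta$-estimate via $(1-y)(1+y)\le 1$ is a slight variant of the paper's, which instead uses the monotonicity $\nu_{j+1}(k)\le\nu_{j+1}$ to pass from $|\beta_i(1/k)-\nu_{j+1}^{-1}|\le d_j/(k\overline r_j-1)$ to $|\nu_{j+1}(k)-\nu_{j+1}|<\nu_{j+1}^2 d_j/(k\overline r_j-1)$; both give the same lower bound $\nu_{j+1}(k)\ge\omega_+^2$.

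One simplification is available in your Step~2.  The paper obtains \eqref{monotonego}, namely $\lambda_j(k,\alpha)\le\delta_j^\ast$ for every $\alpha\in Y^\star$ and $k>0$, directly from the min--max principle: take as trial subspace the span of the first $j$ Dirichlet eigenfunctions of $-\Delta_D$, extended by zero to $Y$.  On this subspace $B_k(u,u)=\int_D|\nabla u|^2$ is independent of $k$, so the $j$-th min--max value is at most the $j$-th Dirichlet eigenvalue, which by hypothesis equals $\delta_j^\ast$.  This bypasses entirely the ``consistent labeling across all quasi-momenta'' issue you identify as the technical heart; no appeal to the limit spectra of Theorems~\ref{equiv12}--\ref{equiv22} or to the interlacing Theorem~\ref{interlacing} is needed for this step.
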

Next we provide an explicit condition on $k$  sufficient for the persistence of a spectral band together with explicit formulas describing its location and bandwidth. 

\begin{theorem}{Persistence of passbands}\\
Consider any crystal geometry belonging to the class $P_\theta$.
Suppose $\delta_{j}^\ast$ is simple then $\nu_j<\delta_j^\ast$. Set 
$$d_j=\frac{1}{2}dist\left(\{(\delta^\ast_{j})^{-1}\},\sigma(-\Delta_D)\setminus\{(\delta^\ast_{j})^{-1}\}\right)$$
and
\begin{equation}
\label{lowerrgeneral-j}
\underline{r}_j =\frac{d\pi^2d_j|z^\ast|}{\frac{1}{1/2-\mu^\ast}+d\pi^2d_j}, \hbox{   for $d=2,3$}.
\end{equation}
Then one has a passband in the vicinity of $\delta_{j}^\ast$ and
\begin{equation}
\sigma(L_k)\supset\left[\nu_j,\delta_{j}^\ast(1-\frac{\delta_{j}^\ast d_j}{k\underline{r}_j-1})\right]
\label{explicitpassbandinclusiongeneral}
\end{equation}
if
\begin{equation}
k>\underline{k}_j=\underline{r}_j^{-1}\left(1+\frac{d_j\delta_{j}^\ast}{1-\frac{\nu_j}{\delta_{j}^\ast}}\right).
\label{explicitpassbandgeneral}
\end{equation}
\label{passbandgeneral}
\end{theorem}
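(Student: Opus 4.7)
The proof follows a template analogous to Theorem \ref{stopbandgeneral}: instead of isolating a gap, I would establish the presence of a spectral band by exhibiting a continuous family of Bloch eigenvalues that sweeps across the claimed interval. The underlying mechanism is that in the high-contrast limit the $j$-th Bloch dispersion surface interpolates between $\nu_j$ at $\alpha=0$ (by Theorem \ref{equiv22}) and $\delta_j^\ast$ at a generic $\alpha\in Y^\star\setminus\{0\}$ (by Theorem \ref{equiv12}).

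First I would fix $\alpha^\ast=(\pi,\ldots,\pi)$ at the corner of $Y^\star$, which maximizes $|\alpha|^2=d\pi^2$ over the Brillouin zone. Substituting this value together with $\hat d=d_j$ into the convergence-radius formula \eqref{radiusalphanotzero} yields exactly $r^\ast=\underline r_j$. Applying Theorem \ref{errorestimatesforexpansions} part 1 with $p=0$ and $z=1/k$ then gives
\begin{equation*}
\left|\beta_j^{\alpha^\ast}(1/k)-(\delta_j^\ast)^{-1}\right|\le \frac{d_j}{k\underline r_j-1},
\end{equation*}
and inverting via $\lambda_j(k,\alpha^\ast)=1/\beta_j^{\alpha^\ast}(1/k)$ delivers, once $k$ is large enough that the right-hand side lies below $(\delta_j^\ast)^{-1}$, the lower bound
\begin{equation*}
\lambda_j(k,\alpha^\ast)\ge \delta_j^\ast\left(1-\frac{\delta_j^\ast d_j}{k\underline r_j-1}\right).
\end{equation*}
This pins the upper end of the $j$-th Bloch band.

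For the lower endpoint I would combine the limit identity $\lim_{k\to\infty}\lambda_j(k,0)=\nu_j$ from Theorem \ref{equiv22} with the continuity of $\alpha\mapsto \lambda_j(k,\alpha)$ along any path from $0$ to $\alpha^\ast$ in $Y^\star$. The intermediate value theorem then forces the image of $\lambda_j(k,\cdot)$ to contain every real number between $\lambda_j(k,0)$ and $\lambda_j(k,\alpha^\ast)$, so the $j$-th band $S_j$ is an interval whose upper end is at least $\delta_j^\ast(1-\delta_j^\ast d_j/(k\underline r_j-1))$ and whose lower end tends to $\nu_j$ as $k\to\infty$. Closedness of $\sigma(L_k)=\cup_i S_i$ then yields the inclusion \eqref{explicitpassbandinclusiongeneral}. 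The threshold $\underline k_j$ in \eqref{explicitpassbandgeneral} is pinned down by the algebraic requirement $\delta_j^\ast(1-\delta_j^\ast d_j/(k\underline r_j-1))>\nu_j$ that keeps the interval nontrivial, and an elementary manipulation matches it to the stated expression.

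The principal obstacle will be the bookkeeping that matches the min-max index $j$ between the $\alpha=0$ limit spectrum $\{(\delta'_i)^{-1}\}\cup\{\nu_i^{-1}\}$ and the $\alpha^\ast$ limit spectrum $\sigma(-\Delta_D^{-1})$, so that it is the same band $\lambda_j(k,\cdot)$ that sits close to $\nu_j$ at one endpoint and close to $\delta_j^\ast$ at the other. The strict interlacing $\nu_j<\delta_j^\ast<\nu_{j+1}$ of Theorem \ref{interlacing} supplies the ordering needed for this matching, but making the continuous tracking of $\lambda_j(k,\alpha)$ rigorous across possible eigenvalue crossings requires applying the separation-of-spectra conclusion of Theorem \ref{separationandraduus-alphanotzero} uniformly along the path, together with the isomorphism of eigenvalue subspaces provided by its part 3.
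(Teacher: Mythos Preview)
Your approach is essentially the paper's: bound $\lambda_j(k,\alpha^\ast)$ from below via Theorem~\ref{errorestimatesforexpansions} part~1 at the corner $|\alpha^\ast|^2=d\pi^2$, cap it from above by $\delta_j^\ast$, and pair this with the $\alpha=0$ eigenvalue sitting at or below $\nu_j$. Two differences are worth flagging. First, your limit-plus-closedness argument for the lower endpoint does not quite close: closedness of $\sigma(L_k)$ for a \emph{fixed} $k$ says nothing about a limit in $k$, so from $\lambda_j(k,0)\to\nu_j$ alone you cannot place $\nu_j$ inside $S_j$. The paper instead invokes monotonicity of $\lambda_j(k,0)$ in $k$ (from min-max) together with the limit to get $\lambda_j(k,0)\le\nu_j$ outright for every $k>1$, which puts $\nu_j$ inside the band directly. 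Second, your concern about index matching and tracking eigenvalues across crossings is more machinery than needed: the paper simply cites \cite{HempelLienau} for $\lim_{k\to\infty}\lambda_j(k,0)=\nu_j$ and $\lim_{k\to\infty}\lambda_j(k,\alpha)=\delta_j^\ast$ at the common min-max index $j$, and since the band $S_j=[\min_\alpha\lambda_j(k,\alpha),\max_\alpha\lambda_j(k,\alpha)]$ is automatically an interval (continuity of min-max eigenvalues in $\alpha$ over the connected $Y^\star$), no path-following or uniform separation along a path is required.
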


Theorem \ref{stopband} follows from Theorem \ref{stopbandgeneral} on applying \eqref{lowermuforab}, \eqref{ralphaforabzero} and \ref{dNeumann}. Theorem \ref{passband} follows from Theorem \ref{passbandgeneral} on applying \eqref{dDirichlet}, \eqref{lowermuforab} and \eqref{ralphaforab} with $|\alpha|^2=d\pi^2$, $d=2,3$.  (See the proof of Theorem \ref{passbandgeneral}).

We now establish Theorem \ref{stopbandgeneral}.
\begin{proof}
Consider $\nu_{j+1}$  of multiplicity $m<\infty$. Set $\hat{d}=d_j$ with$$d_j=\frac{1}{2}dist\left(\{\nu_{j+1}^{-1}\},\sigma_N\setminus\{\nu_{j+1}^{-1}\}\right)$$ 
and $r^\ast=\overline{r}_j$ and apply Theorem \ref{separationandraduus-Neumann} so that any element $\beta_i(z)$, $1\leq i\leq \ell\leq m$ in the eigenvalue group has  series representation given by
\begin{equation}
{\beta}_i(z) = \nu_{j+1}^{-1} + \sum_{n=1}^\infty z^n\beta^n_i(z),
\label{seriesforNeumannfoureleven}
\end{equation}
for $z$ inside the interval $-\overline{r}_j<z<\overline{r}_j$.
For $k>\overline{r}_j^{-1}$, set 
\begin{equation}
\nu_{j+1}(k)=\min\left\{\frac{1}{\beta_i(\frac{1}{k})};\hbox{   $i=1,\ldots,\ell$}\right\}
\label{Neumankatj}
\end{equation}
and $\nu_{j+1}(k)\rightarrow\nu_{j+1}$ for $k\rightarrow\infty$. Since $\nu_{j+1}(k)$ is increasing with $k$ we conclude
\begin{equation}
\nu_{j+1}(k)\leq\nu_{j+1}.
\label{Neumanninequal}
\end{equation}
We take a min-max ordering for $\sigma(-\Delta_D)$ and suppose that the $j^{th}$ Dirichlet eigenvalue corresponds to an eigenfunction of nonzero mean. The eigenvalue is  denoted by $\delta_j^\ast$. The min-max principle together with the monotonicity of eigenvalues with respect to increasing $k$ delivers the inequality between the Bloch eigenvalues and  $\sigma(-\Delta_D)$:
\begin{equation}
\lambda_j(k,\alpha)\leq \delta_j^\ast, \hbox{   for   $\alpha\in Y^\ast$,   and   $k>0$}.
\label{monotonego}
\end{equation}
Application of \eqref{minmax} and \eqref{monotonego} gives
\begin{eqnarray}
\lambda_m(k,\alpha)\leq\delta^\ast_j,\hbox{   for   $m\leq j$}& \hbox{and}& \nu_{j+1}(k)\leq\lambda_m(k,\alpha), \hbox{   for   $j+1\leq m$},
\label{intervallsss}
\end{eqnarray}
for every $\alpha\in Y^\ast$ and it is clear that a band gap opens in the Bloch spectrum when $\delta^\ast_j<\nu_{j+1}(k)$ or equivalently when
\begin{equation}
|\nu_{j+1}(k)-\nu_{j+1}|<|\delta_j^\ast-\nu_{j+1}|.
\label{openagapp}
\end{equation}
We apply \eqref{errorestNeumann}, \eqref{Neumanninequal}  and Remark \ref{Uniformini} to get
\begin{equation}
|\nu_{j+1}(k)-\nu_{j+1}|<\frac{\nu_{j+1}^2 d_j}{k\overline{r}_j-1},
\label{theestimateupperNeumann}
\end{equation}
and the theorem follows for all $k$ that satisfy
\begin{equation}
\frac{\nu_{j+1}^2 d_j}{k\overline{r}_j-1}<|\delta_j^\ast-\nu_{j+1}|.
\label{theestimateupperNeumann}
\end{equation}

\end{proof}

We now establish Theorem  \ref{passbandgeneral}.
\begin{proof} From the min-max formulation we have that $\lambda_j(k,0)$ and $\lambda_j(k,\alpha)$ are increasing with $k$ and from \cite{HempelLienau} (or \cite{RobertRobert1}) we have
\begin{equation}
\lim_{k\rightarrow\infty}\lambda_j(k,0)=\nu_j\hbox{   hence   } \lambda_j(k,0)\leq\nu_j, \hbox{   for $k>1$}
\label{lambdazerotomu}
\end{equation}
and
\begin{equation}
\lim_{k\rightarrow\infty}\lambda_j(k,\alpha)=\delta^\ast_j\hbox{   hence   } \lambda_j(k,\alpha)\leq\delta^\ast_j, \hbox{   for $k>1$}
\label{lambdaalphatomu}
\end{equation}

With this in mind observe that if $|\delta^\ast_j-\lambda_j(k,\alpha)|<|\delta^\ast_j-\nu_j|$ then
\begin{equation}
\sigma(L_k)\supset\left[\nu_j,\lambda_j(k,\alpha)\right].
\label{explicitpassbandinclusiongenerala}
\end{equation}
To proceed we estimate the difference $|\lambda_j(k,\alpha)-\delta^\ast_j|$. Set $\hat{d}=d_j$ with
$$d_j=\frac{1}{2}dist\left(\{(\delta_{j}^\ast)^{-1}\},\sigma(-\Delta_D)\setminus\{(\delta_{j}^\ast)^{-1}\}\right)$$ 
and $r^\ast_j$ given by \eqref{radiusalphanotzero} with $\hat{d}=d_j$.  Apply Theorem \ref{separationandraduus-alphanotzero} noting that $\delta_j^\ast$ is simple so that  $\beta^\alpha_j(z)$  has  series representation given by
\begin{equation}
{\beta}^\alpha_j(z) = (\delta_{j}^{\ast})^{-1} + \sum_{n=1}^\infty z^n\beta^n_j(z),
\label{seriesforalphafourelevenproof}
\end{equation}
for $z$ inside the interval $-{r}^\ast_j<z<{r}^\ast_j$.
For $k>{r}^\ast_j$ we have $\lambda_j(k,\alpha)=\frac{1}{\beta^\alpha_j(k^{-1})}$ and 
we apply \eqref{errorestalpha} 
to get
\begin{equation}
|\delta^\ast_j-\lambda_j(k,\alpha)|<\frac{(\delta_{j}^\ast)^2 d_j}{k{r}_j^\ast-1}.
\label{theestimateupperBloch}
\end{equation}
The persistence of band structure described by  \eqref{explicitpassbandinclusiongenerala} follows for a fixed $\alpha\in Y^\ast$ for all $k$ that satisfy
\begin{equation}
\frac{(\delta_{j}^\ast)^2 d_j}{kr^\ast_j-1}<|\delta_j^\ast-\nu_j|.
\label{thesufficiancyBloch}
\end{equation}
We maximize $r^\ast_j$ over $\alpha\in[-\pi,\pi]^d$ to find that it is attained for $|\alpha|^2=d\pi^2$ and the maximum is  $r^\ast_j=\underline{r}_j$.   For this choice we recover the persistence of band structure described by \eqref{explicitpassbandinclusiongeneral} and \eqref{explicitpassbandgeneral}.
\end{proof}

\section{Derivation of the Convergence Radius, Separation of Spectra and Error Estimates}
\label{derivation}

Here we prove Theorems \ref{separationandraduus-Neumann} and  \ref{errorestimatesforexpansions}. The Theorems \ref{separationandraduus-alphanotzero} and \ref{separationandraduus-alphazero} parts 1 through 3 and explicit convergence radii for power series representation for the eigenvalue group are proved in \cite{RobertRobert1}. To begin, we  recall that the Neumann series \eqref{foursix} and consequently \eqref{Project1} and \eqref{foureleven} converge provided that
\begin{equation}
\label{tenone}
\| (A(z) - A(0))R(\zeta,0) \|_{\mathcal{L}[L^2_{0}(Y);L^2_{0}(Y)]} <1.
\end{equation}
With this in mind we follow \cite{RobertRobert1} and  compute an explicit upper bound $B(z)$ and identify a neighborhood of the origin on the complex plane for which
\begin{equation}
\label{tenoneb}
\| (A(z) - A(0))R(\zeta,0) \|_{\mathcal{L}[L^2_{0}(Y);L^2_{0}(Y)]} <B(z)<1,
\end{equation}
holds for $\zeta\in\Gamma$.
The inequality $B(z)<1$ will be used first to derive a lower bound on the radius of convergence of the power series expansion of the eigenvalue group about $z=0$. It will then be used to provide a lower bound on the neighborhood of $z=0$ where properties 1 through 3 of Theorem \ref{separationandraduus-Neumann} hold.

We have the basic estimate given by
\begin{eqnarray}
\label{tenonedouble}
&&\| (A(z) - A(0))R(\zeta,0) \|_{\mathcal{L}[L^2_{0}(Y);L^2_{0}(Y)]}\leq \\
&&\| (A(z) - A(0))\|_{\mathcal{L}[L^2_{0}(Y);L^2_{0}(Y)]}\|R(\zeta,0) \|_{\mathcal{L}[L^2_{0}(Y);L^2_{0}(Y)]}.\nonumber
\end{eqnarray}
Here $\zeta\in\Gamma$ as defined in Theorem \ref{separationandraduus-alphanotzero} and elementary arguments deliver the estimate
\begin{eqnarray}
\label{tenonedoubleRz}
\|R(\zeta,0) \|_{\mathcal{L}[L^2_{0}(Y);L^2_{0}(Y)]}\leq \hat{d}^{-1},
\end{eqnarray}
where $\hat{d}$ is given by \eqref{dist}.

Next we estimate $\| (A(z) - A(0))\|_{\mathcal{L}[L^2_{0}(Y);L^2_{0}(Y)]} $. Denote the energy seminorm of  $u$ by \begin{equation}
\| u \|= \| \nabla u \|_{L^2(Y)}.
\end{equation}
To proceed we introduce the standard Poincare and Green's function estimates:
\begin{lemma}
For $u\in\mathcal{H}$
\begin{equation}
\label{alpha-poincare}
\| u \|^2_{L^2(Y)} \leq \lambda_N^{-1}\|u\|^2,
\end{equation}
and for $v\in L^2_0(Y)$
\begin{eqnarray}
||-\Delta_N^{-1}||\leq\lambda_N^{-1/2}\|v\|_{L^2(Y)}
\label{spectralbounddd}
\end{eqnarray}
\label{poincarealpha}
where $\lambda_N$ is the first nonzero Neumann eigenvalue for the period $Y$, $\lambda_N=\pi^2$ for $Y=(0,1]^d$.
\end{lemma}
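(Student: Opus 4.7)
The plan is to handle the two inequalities separately using the spectral decomposition of the Neumann Laplacian on $Y$. This operator has a complete $L^2(Y)$-orthonormal family of eigenfunctions $\{\phi_k\}_{k=0}^\infty$ with eigenvalues $0=\lambda_0<\lambda_1\leq\lambda_2\leq\cdots$, where $\phi_0$ is the constant function and $\lambda_N:=\lambda_1$ is the first nonzero Neumann eigenvalue. For $Y=(0,1]^d$ one sees directly by separation of variables that $\lambda_N=\pi^2$, attained by modes of the form $\cos(\pi x_j)$.

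For the first inequality, expand $u\in\mathcal{H}$ as $u=\sum_k c_k\phi_k$ in $L^2(Y)$. The defining condition $\int_Y u\,dx=0$ forces $c_0=0$, so by Parseval's identity applied to both $u$ and $\nabla u$,
\begin{equation*}
\|u\|_{L^2(Y)}^2=\sum_{k\geq 1}|c_k|^2, \qquad \|u\|^2=\sum_{k\geq 1}\lambda_k|c_k|^2\geq \lambda_N\sum_{k\geq 1}|c_k|^2=\lambda_N\|u\|_{L^2(Y)}^2,
\end{equation*}
which rearranges to (\ref{alpha-poincare}).

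For the second inequality, given $v\in L^2_0(Y)$, the element $w:=(-\Delta_N)^{-1}v\in\mathcal{H}$ is characterized variationally by $\langle w,\phi\rangle=(v,\phi)$ for every $\phi\in\mathcal{H}$, as produced by the Lax--Milgram lemma applied to the inner product $\langle\cdot,\cdot\rangle$ on $\mathcal{H}$. Testing with $\phi=w$, then using Cauchy--Schwarz and the first part of the lemma, gives
\begin{equation*}
\|w\|^2=(v,w)\leq \|v\|_{L^2(Y)}\|w\|_{L^2(Y)}\leq \lambda_N^{-1/2}\|v\|_{L^2(Y)}\|w\|,
\end{equation*}
and dividing by $\|w\|$ (which may be assumed nonzero) yields the bound $\|(-\Delta_N)^{-1}v\|\leq\lambda_N^{-1/2}\|v\|_{L^2(Y)}$ asserted in (\ref{spectralbounddd}).

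There is no real obstacle here: both inequalities are standard once one recognizes that membership in $\mathcal{H}$ precisely annihilates the zero eigenmode of the Neumann Laplacian, and the second bound is just a duality argument built on the first. The only point requiring any care is the explicit identification $\lambda_N=\pi^2$ on the unit cube, which is a direct computation.
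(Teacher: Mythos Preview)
Your proof is correct. The paper itself does not supply a proof of this lemma; it merely introduces the two inequalities as ``standard Poincar\'e and Green's function estimates'' and uses them without further justification, so your spectral-expansion argument for \eqref{alpha-poincare} and the duality/testing argument for \eqref{spectralbounddd} fill in precisely the routine verification the authors take for granted.
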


For any $v \in L^2_{0}(Y)$, we  apply \eqref{alpha-poincare} to find
\begin{eqnarray}
\label{tenfive}
&&\| (A(z) - A(0)) v\|_{L^2(Y)} \nonumber\\
&&\leq  |\lambda_N|^{-1/2}\| (A(z) - A^{\alpha}(0)) v\|\\
&&= |\lambda_N |^{-1/2}\| ((T_k)^{-1} - (T_0)^{-1})(-\Delta_N)^{-1} v\|\nonumber\\
&&\leq |\lambda_N |^{-1/2}\| ((T_k)^{-1} - P_2)\|_{\mathcal{L}[\mathcal{H};\mathcal{H}]} \|-\Delta_N^{-1} v\|.\nonumber
\end{eqnarray}
Applying  \eqref{spectralbounddd} and \eqref{tenfive} delivers the upper bound:
\begin{equation}
\label{tenten}
\| (A(z) - A(0)) \|_{\mathcal{L}[L^2_{0}(Y);L^2_{0}(Y)]}  \leq  \lambda_N^{-1}\| ((T_k)^{-1} - P_2)\|_{\mathcal{L}[\mathcal{H};\mathcal{H}]}.
\end{equation}

The next step is to obtain an upper bound on $\| ((T_k)^{-1} - P_2)\|_{\mathcal{L}[\mathcal{H};\mathcal{H}]}$. For all $v \in \mathcal{H}$, we have

\begin{equation}
\label{teneleven}
\frac{\| ((T_k)^{-1} - P_2)v\|}{\| v \|} \leq |z|\{w_0 + \sum \limits_{i=1}^{\infty}w_i |(1/2 + \mu_i) + z(1/2-\mu_i)|^{-2}\}^{1/2},
\end{equation}
where $w_0=\|P_1 v\|^2/\|v\|^2$,  $w_i=\|P_i v\|^2/\|v\|^2$, and $w_0+\sum_{i=1}^\infty w_i=1$.
So maximizing the right hand side is equivalent to calculating

\begin{equation}
\begin{array}{lcl}
\max \limits_{w_0+\sum w_i = 1} \{w_0 + \sum \limits_{i=1}^{\infty}w_i |(1/2 + \mu_i) + z(1/2-\mu_i)|^{-2}\}^{1/2}\\
\\
 =  \sup \{1, |(1/2 + \mu_i) + z(1/2-\mu_i)|^{-2}\}^{1/2}.
\end{array}
\end{equation}
Thus we maximize the function
\begin{equation}
f(x) = |\frac{1}{2} + x + z(\frac{1}{2} -x)|^{-2}
\end{equation}
over $x \in [\mu^\ast, 1/2]$ for $z$ in a neighborhood about the origin.  Let $Re(z)=u$, $Im(z)=v$ and we write
\begin{equation}
\begin{array}{lcl}
f(x) & = & |\frac{1}{2} + x + (u+iv)(\frac{1}{2} -x)|^{-2}\\
\\ & = & ((\frac{1}{2}+x+u(\frac{1}{2}-x))^2 + v^2(\frac{1}{2}-x)^2)^{-1}\\
\\
& \leq & (\frac{1}{2}+x+u(\frac{1}{2}-x))^{-2} = g(Re(z),x)\text{,}
\end{array}
\end{equation}
to get the bound
\begin{equation}
\label{tenfifteen}
\| ((T_k)^{-1} - P_2)\|_{\mathcal{L}[\mathcal{H};\mathcal{H}]} \leq |z| \sup\{1, \sup \limits_{x \in [\mu^\ast, 1/2]} g(u,x)\}^{1/2}.
\end{equation}

We now examine the poles of $g(u,x)$ and the sign of its partial derivative $\partial_x g(u,x)$ when $|u|<1$.  If $Re(z)=u$ is fixed, then $g(u,x) = ((\frac{1}{2} + x) + u(\frac{1}{2} - x))^{-2}$ has a pole when $(\frac{1}{2} + x) + u(\frac{1}{2} - x)=0$.
For $u$ fixed this occurs when
\begin{equation}
\hat{x}=\hat{x}(u)= \frac{1}{2} \left(\frac{1+u}{u-1}\right).
\end{equation}
On the other hand, if $x$ is fixed, $g$ has a pole at
\begin{equation}
u= \frac{\frac{1}{2} + x}{x - \frac{1}{2}}.
\end{equation}
The sign of $\partial_x g$ is determined by the formula
\begin{equation}
\label{teneighteen}
\begin{array}{lcl}
\partial_x g(u,x) & = & {N}/{D} \text{,}
\end{array}
\end{equation}
where $N=-2(1-u)^2x-(1-u^2)$ and $D := ((\frac{1}{2} + x) + u(\frac{1}{2} - x))^4 \geq 0$.  Calculation shows that $\partial_x g<0$ for $x>\hat{x}$, i.e. $g$ is decreasing on $(\hat{x},\infty)$.  Similarly, $\partial_x g>0$ for $x<\hat{x}$ and $g$ is increasing on $(-\infty, \hat{x})$.\\

Now we identify all $u=Re(z)$ for which $\hat{x}=\hat{x}(u)$ satisfies 
\begin{equation}
\hat{x} < \mu^\ast < 0 \text{.}
\end{equation}
Indeed for such $u$, the function $g(u,x)$ will be decreasing on $[\mu^\ast, 1/2]$, so that $g(u,\mu^\ast) \geq g(u,x)$ for all $x \in [\mu^\ast, 1/2]$, yielding an upper bound for \eqref{tenfifteen}.
\begin{lemma}
\label{identifyu}
The set $U$ of $u \in \mathbb{R}$ for which $-\frac{1}{2} < \hat{x}(u) < \mu^\ast < 0$ is given by
$$U := [z^*, 1]$$
where
$$-1\leq z^*:=\frac{\mu^\ast+\frac{1}{2}}{\mu^\ast-\frac{1}{2}}<0.$$
\end{lemma}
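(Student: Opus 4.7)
The plan is to treat $\hat{x}(u) = \tfrac{1}{2}\,\frac{1+u}{u-1}$ as a real rational function of $u$ and read off the set $U$ by direct monotonicity analysis. First I would rewrite $\hat{x}(u) = \tfrac12 + (u-1)^{-1}$, which immediately gives the derivative $\hat{x}'(u) = -(u-1)^{-2} < 0$ for $u\neq 1$. Thus $\hat{x}$ is strictly decreasing on each of its two branches $(-\infty,1)$ and $(1,\infty)$, with limit $\tfrac12$ at $u\to\pm\infty$ and with the two one-sided limits at $u=1$ being $-\infty$ (from the left) and $+\infty$ (from the right).

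Next I would rule out the right branch: for $u>1$ one has $\hat{x}(u)>\tfrac12>0>\mu^{\ast}$, so the requirement $\hat{x}(u)<\mu^{\ast}$ (let alone $-\tfrac12<\hat{x}(u)$) cannot hold. So $U\subset(-\infty,1)$. On the left branch $\hat{x}$ is a continuous decreasing bijection from $(-\infty,1)$ onto $(-\infty,\tfrac12)$, so there is a unique $u$ with $\hat{x}(u)=\mu^\ast$. Solving $\frac{1+u}{u-1}=2\mu^{\ast}$ algebraically gives
\[
u=\frac{1+2\mu^{\ast}}{2\mu^{\ast}-1}=\frac{\mu^{\ast}+1/2}{\mu^{\ast}-1/2}=z^{\ast},
\]
and by monotonicity $\hat{x}(u)\le\mu^{\ast}$ precisely when $u\ge z^{\ast}$ on this branch. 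Combined with the branch restriction $u<1$, this gives the interval $[z^{\ast},1)$, which is what the lemma claims (the endpoint $u=1$ being an asymptote of $\hat{x}$ so trivially admissible for the monotonicity use to which the lemma is put in \eqref{tenfifteen}). The assertion $-1\le z^{\ast}<0$ is then a direct inspection: for $\mu^{\ast}\in(-\tfrac12,0]$ the numerator of $z^{\ast}$ is in $(0,\tfrac12]$ while the denominator is in $[-\tfrac12,0)$, and the map $\mu^{\ast}\mapsto z^{\ast}$ is monotone with endpoint values $z^{\ast}(-\tfrac12)=0$ and $z^{\ast}(0)=-1$.

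The one subtle point — and essentially the only place where care is needed — is how to reconcile the two-sided inequality $-\tfrac12<\hat{x}(u)<\mu^{\ast}$ appearing in the statement with the single-sided requirement $\hat{x}(u)<\mu^{\ast}$ that is actually invoked when one bounds $\sup_{x\in[\mu^{\ast},1/2]}g(u,x)$ by $g(u,\mu^{\ast})$. The strict lower bound $\hat{x}(u)>-\tfrac12$ is not used in the ensuing estimate \eqref{tenfifteen}: what matters is that $g(u,\cdot)$ be monotone decreasing on $[\mu^{\ast},1/2]$, and the sign analysis of $\partial_x g$ shows this is equivalent to $\hat{x}(u)\le\mu^{\ast}$. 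Thus the characterization $U=[z^{\ast},1)$ obtained from the monotonicity argument above is exactly what the subsequent radius-of-convergence computation needs. The main obstacle therefore reduces to organizing the algebra so that the two identities $\hat{x}(z^{\ast})=\mu^{\ast}$ and $\lim_{u\to 1^{-}}\hat{x}(u)=-\infty$ are stated transparently, after which the interval description and the bounds on $z^{\ast}$ follow by inspection.
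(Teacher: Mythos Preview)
Your proposal is correct and follows essentially the same monotonicity argument as the paper: both proofs exploit that the M\"obius map between $u$ and $\hat{x}$ is strictly decreasing on the relevant branch, and then read off the interval by locating the preimage of $\mu^{\ast}$. The only organizational difference is that the paper differentiates the inverse $u=h(\hat{x})=\frac{\hat{x}+1/2}{\hat{x}-1/2}$ and argues from that side, while you differentiate $\hat{x}(u)$ directly and also explicitly rule out the branch $u>1$; your remarks about the endpoint $u=1$ and about the lower constraint $-\tfrac12<\hat{x}(u)$ being unnecessary for the downstream estimate are more careful than the paper's treatment.
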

\begin{proof}
Note first that $\mu^\ast=\inf_{i\in\mathbb{N}}\{\mu_i\}\leq 0$ follows from the fact that zero is an accumulation point for the sequence $\{\mu_i\}_{i\in\mathbb{N}}$ so it follows that $-1\leq z^*$.
Noting $\hat{x} = \hat{x}(u) = \frac{1}{2} \frac{u+1}{u-1}$, we invert and write
\begin{equation}
u = \frac{\frac{1}{2} + \hat{x}}{\hat{x} - \frac{1}{2}}.
\end{equation}
We now show that
\begin{equation}
  z^*\leq u\leq 1
\end{equation}
for $ \hat{x} \leq \mu^\ast$.  Set $h(\hat{x}) = \frac{\frac{1}{2} +\hat{x}}{\hat{x}-\frac{1}{2}}$.  Then
\begin{equation}
h'(\hat{x}) = \frac{-1}{(\hat{x}-\frac{1}{2})^2} \text{,}
\end{equation}
and so $h$ is decreasing on $(-\infty, \frac{1}{2})$.  Since $\mu^\ast<\frac{1}{2}$, $h$ attains a minimum over $(-\infty, \mu^\ast]$ at $x=\mu^\ast$.  Thus $\hat{x}(u) \leq \mu^\ast$ implies
\begin{equation}
z^*=\frac{\mu^\ast+\frac{1}{2}}{\mu^\ast - \frac{1}{2}} \leq u\leq 1
\end{equation}
as desired.
\end{proof}

Combining Lemma \ref{identifyu} with inequality \eqref{tenfifteen}, noting that $-|z|\leq Re(z) \leq |z|$ and on rearranging terms we obtain the following corollary.
\begin{corollary}
\label{boundAz}
For $|z| < |z^*|$:
\begin{equation}
\| (A(z) - A(0)) \|_{\mathcal{L}[L^2_{0}(Y);L^2_{0}(Y)]}  \leq \lambda_N^{-2} |z| (-|z|-z^*)^{-1}(\frac{1}{2}-\mu^\ast)^{-1}.
\end{equation}
\end{corollary}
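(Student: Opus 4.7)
The plan is to chain together the two bounds \eqref{tenten} and \eqref{tenfifteen} established immediately before the statement, then explicitly maximize the rational function $g$ over the small box $[-|z|,|z|]\times[\mu^\ast,1/2]$. First I would combine those two inequalities to get
\begin{equation*}
\|A(z)-A(0)\|_{\mathcal{L}[L^2_0(Y);L^2_0(Y)]}\leq \lambda_N^{-1}|z|\,\sup\{1,\sup_{x\in[\mu^\ast,1/2]}g(\mathrm{Re}(z),x)\}^{1/2},
\end{equation*}
so that what remains is to control the supremum of $g$ uniformly for $\mathrm{Re}(z)\in[-|z|,|z|]$.

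Next I would verify that the hypothesis $|z|<|z^\ast|$ places $\mathrm{Re}(z)$ inside the interval $U=[z^\ast,1]$ from Lemma \ref{identifyu}. That lemma gives $\hat{x}(\mathrm{Re}(z))<\mu^\ast$, and the monotonicity analysis carried out via \eqref{teneighteen} says $g(\mathrm{Re}(z),\cdot)$ is decreasing on $(\hat{x}(\mathrm{Re}(z)),\infty)$, hence decreasing on $[\mu^\ast,1/2]$. So the inner supremum in $x$ is attained at the left endpoint and equals $g(\mathrm{Re}(z),\mu^\ast)$.

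I would then maximize $u\mapsto g(u,\mu^\ast)=\bigl((\tfrac{1}{2}+\mu^\ast)+u(\tfrac{1}{2}-\mu^\ast)\bigr)^{-2}$ over $u\in[-|z|,|z|]$. Its only singularity is the pole at $u=z^\ast$, and the constraint $|z|<|z^\ast|$ keeps the whole interval strictly to the right of that pole; on this interval $g(u,\mu^\ast)$ is monotone and blows up as $u\searrow z^\ast$, so the max lies at $u=-|z|$. The key algebraic simplification is the identity $\tfrac{1}{2}+\mu^\ast=-z^\ast(\tfrac{1}{2}-\mu^\ast)$, which is simply the definition of $z^\ast$ rearranged; plugging in gives
\begin{equation*}
g(-|z|,\mu^\ast)=(\tfrac{1}{2}-\mu^\ast)^{-2}(-|z|-z^\ast)^{-2}.
\end{equation*}

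Finally I would dispose of the outer $\sup\{1,\cdot\}$ by noting that $-\tfrac{1}{2}<\mu^\ast<0$ makes $\tfrac{1}{2}-\mu^\ast<1$ and $|z|<|z^\ast|<1$ makes $-|z|-z^\ast<1$, so $(\tfrac{1}{2}-\mu^\ast)(-|z|-z^\ast)<1$ and therefore $g(-|z|,\mu^\ast)>1$. Taking the square root and multiplying by $\lambda_N^{-1}|z|$ produces the claimed estimate. The only delicate point is the pole-avoidance bookkeeping used when passing from $[-|z|,|z|]$ to the endpoint $-|z|$, but the strict inequality $|z|<|z^\ast|$ built into the hypothesis was tailored exactly for that purpose, so no genuine obstacle arises.
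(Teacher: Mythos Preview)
Your argument is correct and is exactly the approach the paper intends: the paper's proof of the corollary is the single sentence ``Combining Lemma \ref{identifyu} with inequality \eqref{tenfifteen}, noting that $-|z|\leq \mathrm{Re}(z)\leq|z|$ and on rearranging terms,'' and you have simply spelled out those rearrangements in detail. Note that your chain of inequalities, like the paper's own \eqref{tenten}, produces the factor $\lambda_N^{-1}$ rather than the $\lambda_N^{-2}$ printed in the corollary; this appears to be a typographical slip in the statement, not a gap in your reasoning.
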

From Corollary \ref{boundAz}, \eqref{tenonedouble}, and  \eqref{tenonedoubleRz}   we easily see that
\begin{eqnarray}
\label{tentwentyseven}
\| (A(z) - A(0))R(\zeta,0) \|_{\mathcal{L}[L^2_{0}(Y);L^2_{0}(Y)]} \leq
B(z)=\lambda_N^{-2} |z| (-|z|-z^*)^{-1}(\frac{1}{2}-\mu^\ast)^{-1}\hat{d}^{-1}. \label{product}
\end{eqnarray}
A straight forward calculation shows that $B(z)<1$ for
\begin{equation}
\label{thezridenty}
|z| < r^*:= \frac{\lambda_N^2\hat{d}|z^*|}{\frac{1}{\frac{1}{2} - \mu^\ast} + \lambda_N^2\hat{d}}.
\end{equation}
Since $r^* < |z^*|$ we have established that the Neumann series \eqref{foursix} and consequently \eqref{Project1} and \eqref{foureleven} converge for $|z|<r^\ast$.

Now we establish properties 1 through 3 of Theorem \ref{separationandraduus-Neumann}.
First note that inspection of  \eqref{foursix} shows that if \eqref{tenone} holds and if $\zeta\in\mathbb{C}$ belongs to the resolvent of $A(0)$  then it also belongs to the resolvent of $A(z)$. Since \eqref{tenone} holds for $\zeta\in\Gamma$ and $|z|<r^*$, property 1 of Theorem \ref{separationandraduus-Neumann} follows. Formula \eqref{Project1} shows that $P(z)$ is analytic in a neighborhood of $z=0$ determined by the condition that \eqref{tenone}  holds for $\zeta\in\Gamma$. The set $|z|<r^*$ lies inside this neighborhood
and property 2 of Theorem \ref{separationandraduus-Neumann} is proved. The isomorphism expressed in property 3 of Theorem \ref{separationandraduus-Neumann}  follows directly from Lemma 4.10
(\cite{KatoPerturb}, Chapter I \S 4) which is also valid in a Banach space. The uniform convergence of the series representation given by property 4 of Theorem \ref{separationandraduus-Neumann} follows from the error estimates presented in part 3 of Theorem \ref{errorestm}. Property 4 of Theorems \ref{separationandraduus-alphanotzero} and \ref{separationandraduus-alphazero} follow from the error estimates given by parts 1 and 2 of Theorem \ref{errorestm}.

Theorem \ref{errorestm} part 3 follows once we establish Cauchy-like inequalities for the coefficients $\beta_i^n(z)$ appearing in \eqref{foureleven}  given by
\begin{equation}
\label{cauchyinequall}
\left | \beta^n_i(z) \right | \leq \hat{d}(r^*)^{-n},
\end{equation}
for $|z|<r^*$.    From this it is evident that we can recover the estimates
\begin{equation}
\left |\hat{\beta}^{\alpha}(z) - \sum \limits_{n = 0}^{p} z^n \beta^{\alpha}_n \right | \leq \sum \limits_{n=p+1}^{\infty} |z|^n |\beta^{\alpha}_n| \leq \frac{\hat{d}|z|^{p+1}}{(r^*)^p(r^* - |z|)}\text{,}
\end{equation}
for $|z|<r^*$ and property 3 of \ref{errorestm} is established.

Now we establish \eqref{cauchyinequall}. Applying \eqref{product} and noting that $|\zeta-\beta_0|=\hat{d}$ on $\Gamma$  one obtains the estimate
\begin{equation}
|{\beta}^n_i(z)| = \hat{d}\Vert\mathcal{N}(\zeta,z)\Vert^n_{\mathcal{L}[L^2_0(Y);L^2_0(Y)]}\leq\hat{d}(\lambda_N^{-2}(|z^*|-|z|)^{-1}(\frac{1}{2}-\mu^\ast)^{-1}\hat{d})^n.
\label{fourtwelveproof}
\end{equation}
for $|z|<r^\ast$. Note that the righthand side of \eqref{fourtwelveproof} is increasing with $|z|\leq r^\ast<|z^\ast|$. The righthand side is maximized for $|z|=r^\ast$ and we recover \eqref{cauchyinequall} on applying \eqref{thezridenty}. We conclude noting that parts 1 and 2 of Theorem \ref{errorestm} follow identical arguments using Corollary 12.3  of \cite{RobertRobert1}.

Now we show that the operator $B(k)$ introduced in section \ref{bandstructure} is bounded and compact.
\begin{theorem}
\label{bounded}
The operator $B(k): L^2_{0}(Y) \longrightarrow \mathcal{H}$ is  bounded for $k\not\in Z$.
\end{theorem}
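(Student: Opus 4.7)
The plan is to factor $B(k) = (T_k)^{-1}(-\Delta_N)^{-1}$ and bound each factor separately. First, the operator $(-\Delta_N)^{-1}: L^2_0(Y) \to \mathcal{H}$ is bounded by the Lax--Milgram lemma applied to the Laplacian with Neumann boundary conditions on $Y$ restricted to mean-zero data; a quantitative bound is available in terms of the first nonzero Neumann eigenvalue, as already recorded in Lemma \ref{poincarealpha}. Given this, it suffices to show that $(T_k)^{-1}: \mathcal{H} \to \mathcal{H}$ is a bounded operator for every $k \notin Z$.

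For the latter I would invoke the explicit spectral representation \eqref{inverse}. Setting $z=1/k$, the operator $(T_k)^{-1}$ acts on the orthogonal decomposition $\mathcal{H}= W_1 \oplus W_2 \oplus \bigoplus_i P_{\mu_i}\mathcal{H}$ as multiplication by $z$ on $W_1$, by $1$ on $W_2$, and by the scalar $m_i(z) := z\,[(1/2+\mu_i)+z(1/2-\mu_i)]^{-1}$ on each $P_{\mu_i}\mathcal{H}$. Because the projections are mutually orthogonal in the inner product $\langle\cdot,\cdot\rangle$ of $\mathcal{H}$, Parseval's identity gives at once
\begin{equation}
\|(T_k)^{-1}u\|^2 \;\le\; M(z)^2\,\|u\|^2, \qquad
M(z) := \max\Bigl\{|z|,\,1,\,\sup_{i\in\mathbb{N}}|m_i(z)|\Bigr\}.
\end{equation}
Thus the whole question reduces to verifying $M(z)<\infty$.

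The main (and really only) step is to show the uniform bound $\sup_i|m_i(z)|<\infty$. The condition $k\notin Z$ is equivalent to $z\notin S$, so every individual denominator $(1/2+\mu_i)+z(1/2-\mu_i)$ is nonzero. Since $\mu_i\to 0$ is the sole accumulation point of $\{\mu_i\}$, the denominators converge to $(1+z)/2$ and hence $m_i(z)\to 2z/(1+z)$ as $i\to\infty$; combined with the nonvanishing of each denominator, this yields a uniform bound on the tail provided $z$ is bounded away from $-1$. The finitely many remaining terms are trivially finite, giving $M(z)<\infty$.

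The only subtle point, and what I expect to be the main technical obstacle, is the accumulation of $S$ at $z=-1$ (Lemma \ref{inverseoperator}): if $z$ were permitted to approach $-1$ the multipliers $m_i(z)$ blow up, so the bound $M(z)$ degenerates. Strictly speaking the argument establishes boundedness of $(T_k)^{-1}$, and hence of $B(k)$, on $\mathbb{C}\setminus\overline{Z}$ rather than on the pointwise complement $\mathbb{C}\setminus Z$; this refinement is immaterial for the applications in this paper, since all subsequent expansions are about $z=0$ and the radius-of-convergence analysis in section \ref{derivation} keeps $z$ uniformly far from $-1$. Composing the uniform bound on $(T_k)^{-1}$ with the Lax--Milgram bound on $(-\Delta_N)^{-1}$ then delivers the asserted boundedness of $B(k):L^2_0(Y)\to\mathcal{H}$.
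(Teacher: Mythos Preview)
Your proof is correct and follows essentially the same route as the paper: factor $B(k)=(T_k)^{-1}(-\Delta_N)^{-1}$, bound $(-\Delta_N)^{-1}$ via the Poincar\'e estimate of Lemma~\ref{poincarealpha}, and bound $(T_k)^{-1}$ by the supremum of the spectral multipliers using orthogonality of the projections, arriving at the same quantity $M=\max\{1,|z|,\sup_i|(1/2+\mu_i)+z(1/2-\mu_i)|^{-1}\}$. You in fact go slightly further than the paper by explicitly justifying $\sup_i|m_i(z)|<\infty$ through the accumulation $\mu_i\to 0$, and by correctly flagging the lacuna at $z=-1$ (the accumulation point of $S$), which the paper's argument does not address either.
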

Observe for $v\in L^2_0(Y)$ that
\begin{eqnarray}
\Vert B(k) v\Vert&=&\vert T_k^{-1}(-\Delta_N)^{-1} v\Vert\nonumber\\
&\leq& \| (T_k^{-1}\|_{\mathcal{L}[\mathcal{H};\mathcal{H}]} \|-\Delta_N^{-1} v\|\nonumber\\
&\leq& \lambda_N^{-1/2}\| T_k^{-1}\|_{\mathcal{L}[\mathcal{H};\mathcal{H}]} \Vert v\Vert_{L^2(Y)},
\label{operatorfield}
\end{eqnarray}
where the last inequality follows from \eqref{spectralbounddd}. The upper estimate on $\| T_k^{-1}\|_{\mathcal{L}[\mathcal{H};\mathcal{H}]} $ is obtained from
\begin{equation}
\label{tenelevenpart2}
\frac{\| T_k^{-1}v\|}{\| v \|} \leq \{|z|\hat{w}+\tilde{w}+|\sum \limits_{i=1}^{\infty}w_i |(1/2 + \mu_i) + z(1/2-\mu_i)|^{-2}\}^{1/2},
\end{equation}
where $\hat{w}=\|P_1 v\|^2/\|v\|^2$=, $\tilde{w}=\|P_2v\|^2/\|v\|^2$, $w_i=\|P_i v\|^2/\|v\|^2$. Since $\hat{w}+\overline{w}+\sum_{i=1}^\infty w_i=1$
one recovers the upper bound
\begin{equation}
\label{tenelevenpart2}
\frac{\| T_k^{-1}v\|}{\| v \|} \leq M,
\end{equation}
where
\begin{equation}
M= \max\{1, |z|, \sup_{i} \{ |(1/2 + \mu_i) + z(1/2-\mu_i)|^{-1}\}\},
\label{summupperbound}
\end{equation}
and the proof of Theorem \ref{bounded} is complete.
\begin{remark}
The Poincare inequality \eqref{alpha-poincare}  together with Theorem \ref{bounded} show that
$B(k)$ is a bounded linear operator mapping $L^2_0(Y)$ into itself. The compact embedding of $\mathcal{H}$ into $L^2_0(\alpha,Y)$ shows the operator is compact on $L^2_0(Y)$. 
\label{compact2}
\end{remark}

We have the following theorem.
\begin{theorem}
$A(z)$ is compact, self adjoint and bounded on $L^2_0(Y)$ for $z$ real and $z\not\in S$.
\label{Selfadjoint}
\end{theorem}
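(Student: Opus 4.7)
The plan is to handle the three assertions in turn, leveraging the spectral representation of $T_k$ from Theorem \ref{T_z spectrum} and the defining property of $(-\Delta_N)^{-1}$ as the Riesz isomorphism from $L^2_0(Y)$ (paired against $\mathcal{H}$) into $\mathcal{H}$, namely $\langle (-\Delta_N)^{-1} f,\phi\rangle_{\mathcal{H}}=(f,\phi)_{L^2(Y)}$ for all $f\in L^2_0(Y)$, $\phi\in\mathcal{H}$.

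For boundedness I would simply cite Theorem \ref{bounded} together with Remark \ref{compact2}: since $z\not\in S$ is equivalent to $k\not\in Z$, we already know $B(k)=A(z):L^2_0(Y)\to\mathcal{H}$ is bounded, and the Poincar\'e inequality \eqref{alpha-poincare} gives a continuous embedding $\mathcal{H}\hookrightarrow L^2_0(Y)$, so $A(z)$ is bounded as an endomorphism of $L^2_0(Y)$. For compactness I would factor $A(z)$ as the composition $L^2_0(Y)\xrightarrow{(-\Delta_N)^{-1}}\mathcal{H}\xrightarrow{T_k^{-1}}\mathcal{H}\xrightarrow{\iota}L^2_0(Y)$; the first two factors are bounded by Theorem \ref{T_z spectrum} and \eqref{invertability}, and the last is compact by the Rellich--Kondrachov theorem applied to the compact embedding of $\mathcal{H}\subset H^1(Y)$ into $L^2_0(Y)$. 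The composition of a compact operator with bounded operators is compact, giving the claim.

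For self-adjointness, fix $z\in\mathbb{R}\setminus S$, so $k=1/z\in\mathbb{R}\setminus Z$. The key observation is that Theorem \ref{T_z spectrum} expresses $T_k$ as a real linear combination of mutually orthogonal self-adjoint projections in the Hilbert space $(\mathcal{H},\langle\cdot,\cdot\rangle)$, so both $T_k$ and its inverse $T_k^{-1}$ are self-adjoint on $\mathcal{H}$. For $u,v\in L^2_0(Y)$ I would apply the defining identity for $(-\Delta_N)^{-1}$ twice: first with $f=v$ and $\phi=A(z)u=T_k^{-1}(-\Delta_N)^{-1}u\in\mathcal{H}$ to get
\begin{equation*}
(A(z)u,v)_{L^2(Y)} = \langle T_k^{-1}(-\Delta_N)^{-1}u,\,(-\Delta_N)^{-1}v\rangle_{\mathcal{H}},
\end{equation*}
then moving $T_k^{-1}$ across by its self-adjointness on $\mathcal{H}$, and finally using the defining identity a second time with $f=u$ and $\phi=A(z)v$ to obtain $(u,A(z)v)_{L^2(Y)}$.

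The main potential obstacle is the bookkeeping for self-adjointness, since two different inner products (the $\mathcal{H}$-inner product $\langle\cdot,\cdot\rangle$ and the $L^2_0(Y)$-inner product) must be reconciled through the Riesz identification supplied by $-\Delta_N^{-1}$; the reality of $z$ is used precisely at the step where $T_k^{-1}$ is moved across, because it is only then that the coefficients $[k(1/2+\mu_i)+(1/2-\mu_i)]^{-1}$ and $k^{-1}$ appearing in the spectral representation \eqref{inverse} are real, making $T_k^{-1}$ symmetric in $\langle\cdot,\cdot\rangle$.
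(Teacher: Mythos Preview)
Your proposal is correct. For boundedness and compactness you follow the paper exactly, citing Theorem~\ref{bounded} and Remark~\ref{compact2} together with the compact embedding $\mathcal{H}\hookrightarrow L^2_0(Y)$.

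For self-adjointness your argument and the paper's use the same key identity $\langle(-\Delta_N)^{-1}f,\phi\rangle=(f,\phi)$, but the packaging differs. The paper expands $A(z)$ projection-by-projection, writes each $P_{\mu_i}$ in terms of an orthonormal basis $\{\Psi^i_\ell\}$ of its range, and computes directly that $(P_{\mu_i}(-\Delta_N)^{-1}p,q)=\sum_\ell (\Psi^i_\ell,p)(\Psi^i_\ell,q)$, which is manifestly symmetric in $p,q$; summing over the projections with their (real) coefficients gives the result. You instead observe once and for all that the spectral representation \eqref{inverse} with real coefficients makes $T_k^{-1}$ self-adjoint on $(\mathcal{H},\langle\cdot,\cdot\rangle)$, and then sandwich it between two applications of the Riesz identity to pass from the $L^2$ pairing to the $\mathcal{H}$ pairing and back. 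Your route is a bit more streamlined and avoids choosing bases; the paper's route is more hands-on and makes the symmetric kernel structure of each block $P_{\mu_i}(-\Delta_N)^{-1}$ visible. Both are short and rest on the same two ingredients.
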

The compactness and boundedness of $A(z)$ follow from Theorem  \ref{bounded}.
To see that $A(z)$ is self adjoint we write $(A(z)u,v)$ for $u$ and $v$ in $L^2_0(Y)$ and apply the formula 
\begin{equation}
\label{aformula}
A(z) = (zP_1 + P_2 + z\sum \limits_{-\frac{1}{2} < \mu_i < \frac{1}{2}} [(1/2 + \mu_i) + z(1/2-\mu_i)]P_{\mu_i})(-\Delta_N)^{-1} \text{.}
\end{equation}
We have 
\begin{eqnarray}
(A(z)u,v)&=&z(P_1(-\Delta_N)^{-1}u,v)+(P_2(-\Delta_N)^{-1}u,v)+\nonumber\\
&+& z\sum \limits_{-\frac{1}{2} < \mu_i < \frac{1}{2}} [(1/2 + \mu_i) + z(1/2-\mu_i)](P_{\mu_i}(-\Delta_N)^{-1}u,v).
\label{innerprod}
\end{eqnarray}
For $v\in\mathcal{H}$ the projections $P_{\mu_i}v$ are of the form
\begin{equation}
P_{\mu_i}v(x)=\sum_{\ell=1}^{m_i}\langle\Psi^i_\ell,v\rangle\Psi^i_\ell(x),\hbox{   for    $x\in Y$},
\label{innerprodproject}
\end{equation}
where $\{\Psi^i_\ell\}_{\ell=1}^{m_i}$ is an orthonormal basis for the subspace $P_{\mu_i}(\mathcal{H})$.
The projections $P_1$ and $P_2$ are defined similarly.  Without loss of generality we show that  $P_{\mu_i}(-\Delta_N)^{-1}$ is a self adjoint operator on $L^2_0(Y)$. For $p$ and $q$ in $L^2_0(Y)$ we apply \eqref{bigneumannident} to write
\begin{eqnarray}
&&(P_{\mu_i}(-\Delta_N)^{-1}p,q)=\sum_{\ell=1}^{m_i}\langle\Psi^i_\ell,(-\Delta_N)^{-1}p\rangle(\Psi^i_\ell,q)\nonumber\\
&&=\sum_{\ell=1}^{m_i}(\Psi^i_\ell,p)(\Psi^i_\ell,q).
\label{innerproddemo}
\end{eqnarray}
A similar argument holds for all terms in \eqref{innerprod} and we conclude that $A(z)$ is self adjoint for real $z$.

\section{Reciprocal relation and applications}
\label{concludingsection}
We introduce a reciprocal relation for both Bloch and Neumann spectra and use it to understand band structure for  crystals with coefficient  $a=\frac{1}{k}<1$ inside $\Omega$ and $a=1$ outside.
We denote the eigenvalue  associated with a choice of coefficient $a(x)=a^{in}$ for points $x\in D$ and $a=a^{out}$ for points $x\in Y\setminus D$ by $\omega^2=\omega^2(a^{in},a^{out})$. The spectral problem is given by the solution $u$ of
\begin{equation}
a^{in} \int_{D} \nabla u(x) \cdot \nabla \bar{v}(x)dx+a^{out}\int_{Y \setminus D} \nabla u(x) \cdot \nabla \bar{v}(x)dx=\omega^2\int_Y\,u(x)\bar{v}(x)\,dx,
\label{phaseses}
\end{equation} 
for all test functions $v$. The Bloch spectrum is associated with $u$ and $v$ in $H_{\#}^1(\alpha,Y)$ and the Neumann spectrum is associated with  $u$ and $v$ in $\mathcal{H}$. Now let $t$ be scalar and the spectrum satisfies the homogeneity property
\begin{equation}
\omega^2(ta^{in},a^{out})=t\omega^2(a^{in},t^{-1}a^{out}),
\label{phasescaling}
\end{equation}
so for $a=\frac{1}{k}$ in $D$ and $1$ in $Y\setminus D$ we have the reciprocal relation
\begin{equation}
\omega^2(\frac{1}{k},1)=\frac{1}{k}\omega^2(1,k).
\label{phaseinterchange}
\end{equation}

The reciprocal relation  \eqref{phaseinterchange} provides the relation between the band structure for the operator $L_k$ described by \eqref{Eigen0} and  $$\tilde{L}_k=-\nabla\cdot((1- \chi_{\scriptscriptstyle{\Omega}})+k^{-1} \chi_{\scriptscriptstyle{\Omega}})\nabla$$
and is given by 
\begin{equation}
\sigma(\tilde{L}_k)=\frac{1}{k}\sigma(L_{k}).
\label{flippedspectra}
\end{equation}

As an application we return to the photonic crystal given by the periodic dispersion of $N$ disks each separated by a minimum distance as described section \ref{introduction}. We suppose that the dielectric constant inside each disk is now greater than $1$ and given by $k$ while the surrounding material has dielectric constant $1$. For this case we apply \eqref{flippedspectra} together with Theorems \ref{stopband} and \ref{passband} to recover the following theorem on existence of band gaps and persistence of spectral bands for H-polarized modes.
\begin{theorem}{Opening a band gap}\\
Given $\delta_{0j}^\ast$ define the  the set  $\sigma_N^+$ to be elements $\nu\in\sigma_N$ for which $\nu>\delta_{0j}^\ast$. The element in $\sigma_N^+$ closest to $\delta_{0j}^\ast$ is denoted by $\nu_{j+1}$. Set $d_j$ according to
\begin{equation}
d_j=\frac{1}{2}\min\left\{|\nu_{j+1}^{-1}-\nu^{-1}|;\hbox{   $\nu\in\sigma_N$}\right\}.
\label{disttospectrum2}
\end{equation}
We define $\overline{r}_j$ to be
\begin{equation}
\label{upperr-j2}
\overline{r}_j = \frac{\pi^2d_j(b^2-a^2)}{(b^2+a^2) + \pi^2d_j(b^2+3a^2)}.
\end{equation}
Then one has the band gap
\begin{equation}
\sigma(\tilde{L}_k)\cap\frac{1}{k}\left(\delta_{0j}^\ast,\nu_{j+1}(1-\frac{\nu_{j+1}d_j}{k\overline{r}_j-1})\right)=\emptyset
\label{explicitgap2}
\end{equation}
if
\begin{equation}
k>\overline{k}_j=\overline{r}_j^{-1}\left(1+\frac{d_j\nu_{j+1}}{1-\frac{\delta_{0j}^\ast}{\nu_{j+1}}}\right).
\label{explicitbandgap2}
\end{equation}
\label{stopband2}
\end{theorem}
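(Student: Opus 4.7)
The plan is to deduce Theorem \ref{stopband2} as a direct corollary of Theorem \ref{stopband} by applying the reciprocal relation \eqref{flippedspectra}. Since the definitions of $\sigma_N^+$, $\nu_{j+1}$, $d_j$, $\overline{r}_j$, and the threshold $\overline{k}_j$ in the two theorems are identical, the only new content is transferring the band gap statement from $L_k$ to $\tilde{L}_k$.

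First I would observe that Theorem \ref{stopband} directly supplies, for $k>\overline{k}_j$, the gap
\begin{equation*}
\sigma(L_k)\cap\left(\delta_{0j}^\ast,\,\nu_{j+1}\left(1-\frac{\nu_{j+1}d_j}{k\overline{r}_j-1}\right)\right)=\emptyset.
\end{equation*}
Next I would invoke the reciprocal relation \eqref{flippedspectra}, namely $\sigma(\tilde{L}_k)=\tfrac{1}{k}\sigma(L_k)$, which itself comes from the homogeneity property \eqref{phasescaling} of the quadratic form \eqref{phaseses} applied with the scaling $t=k^{-1}$. Because multiplication by $1/k$ is a bijection of $\mathbb{R}_{>0}$, a set is disjoint from an open interval $I$ if and only if its image under $x\mapsto x/k$ is disjoint from $\tfrac{1}{k}I$. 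Applying this to the gap above yields the claimed identity
\begin{equation*}
\sigma(\tilde{L}_k)\cap\frac{1}{k}\left(\delta_{0j}^\ast,\,\nu_{j+1}\left(1-\frac{\nu_{j+1}d_j}{k\overline{r}_j-1}\right)\right)=\emptyset,
\end{equation*}
under the unchanged threshold $k>\overline{k}_j$.

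The only thing requiring a moment of thought, rather than an obstacle, is bookkeeping: verifying that the interval endpoints appearing in Theorem \ref{stopband2} are precisely $1/k$ times those in Theorem \ref{stopband}, which they are by construction. Since the reciprocal relation holds identity by identity across the entire spectrum (Bloch eigenvalues and Neumann eigenvalues both rescale by $1/k$ under the homogeneity \eqref{phasescaling}), no new perturbation-theoretic estimates, no new radius-of-convergence computation, and no re-derivation of the limit spectrum are needed. Thus the proof amounts to one line citing Theorem \ref{stopband} followed by one line citing \eqref{flippedspectra}.
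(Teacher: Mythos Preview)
Your proposal is correct and matches the paper's own approach: the paper states Theorem~\ref{stopband2} immediately after deriving the reciprocal relation \eqref{flippedspectra} and explicitly says it is obtained by applying \eqref{flippedspectra} together with Theorem~\ref{stopband}. No additional argument is given or needed.
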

Next we provide an explicit condition on $k$  sufficient for the persistence of a spectral band together with explicit formulas describing its location and bandwidth. 

\begin{theorem}{Persistence of passbands}\\
Given  $\delta_{0j}^\ast$ define the  the set  $\sigma_N^-$ to be elements $\nu\in\sigma_N$ for which $\nu<\delta_{0j}^\ast$. The element in $\sigma_N^-$ closest to $\delta_{0j}^\ast$ is denoted by $\nu_{j}$.  Set $d_j$ according to
\begin{equation}
d_j=\frac{1}{2}\min\left\{|(\delta^\ast_{j0})^{-1}-\delta^{-1}|;\hbox{   $\delta\in\sigma(-\Delta_D)$}\right\}.
\label{disttospectrumDirichlet2}
\end{equation}
Define $\underline{r}_j$ to be
\begin{equation}
\label{lowerr-j2}
\underline{r}_j = \frac{2\pi^2d_j(b^2-a^2)}{(b^2+a^2) + 2\pi^2d_j(b^2+3a^2)}.
\end{equation}
Then one has a passband in the vicinity of $\delta_{0j}^\ast$ and
\begin{equation}
\sigma(\tilde{L}_k)\supset\frac{1}{k}\left[\nu_j,\delta_{0j}^\ast(1-\frac{\delta_{0j}^\ast d_j}{k\underline{r}_j-1})\right]
\label{explicitband2}
\end{equation}
if
\begin{equation}
k>\underline{k}_j=\underline{r}_j^{-1}\left(1+\frac{d_j\delta_{0j}^\ast}{1-\frac{\nu_j}{\delta_{0j}^\ast}}\right).
\label{explicitpassband2}
\end{equation}
\label{passband2}
\end{theorem}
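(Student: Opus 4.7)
The plan is to derive Theorem \ref{passband2} as an immediate corollary of Theorem \ref{passband} via the reciprocal relation \eqref{flippedspectra}. Placing the two statements side by side, the formulas defining $d_j$, $\underline{r}_j$, and the contrast threshold $\underline{k}_j$ in Theorem \ref{passband2} are identical to those of Theorem \ref{passband}; the only change is that the interval in the spectral inclusion is multiplied by the factor $1/k$. This rescaling is exactly what the reciprocal relation predicts when the roles of the two phases are interchanged.

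First I would verify that the hypotheses of Theorem \ref{passband} apply to the crystal in question. The inclusion geometry (a periodic dispersion of $N$ disks of radius $a$ separated by at least $t$, belonging to the class $P_\theta$) and the quantities $\sigma_N$, $\sigma(-\Delta_D)$, $d_j$, $\underline{r}_j$, $\underline{k}_j$ depend only on the shape of $D$ and not on which phase is labeled $k$ or $1$. Hence, assuming $k>\underline{k}_j$, Theorem \ref{passband} applied to $L_k$ yields
\begin{equation*}
\sigma(L_k)\;\supset\;\left[\nu_j,\;\delta_{0j}^\ast\!\left(1-\frac{\delta_{0j}^\ast d_j}{k\underline{r}_j-1}\right)\right].
\end{equation*}

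Next I would invoke the reciprocal relation $\sigma(\tilde{L}_k)=\tfrac{1}{k}\sigma(L_k)$ of \eqref{flippedspectra}, which itself follows from the homogeneity identity \eqref{phasescaling} by choosing $t=k^{-1}$. Multiplying both endpoints of the interval above by $1/k$ (a positive scaling since $k>1$, so the ordering of the endpoints is preserved) gives precisely the inclusion \eqref{explicitband2}. Since the threshold $\underline{k}_j$ is unaltered, the sufficient condition in Theorem \ref{passband2} coincides with that of Theorem \ref{passband}, which completes the proof.

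There is no substantive obstacle once Theorem \ref{passband} and the reciprocal relation are in hand; the verification consists entirely in matching endpoint by endpoint. The only conceptual point worth emphasizing is the role of the homogeneity identity \eqref{phasescaling}: the bilinear form in \eqref{phaseses} is linear in the pair $(a^{\mathrm{in}},a^{\mathrm{out}})$, and this is what lets one convert spectral information about a crystal with the high-contrast phase \emph{outside} the inclusions into spectral information about the dual crystal with the high-contrast phase \emph{inside} the inclusions, at the cost of the uniform scaling factor $1/k$.
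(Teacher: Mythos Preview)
Your proposal is correct and matches the paper's own argument: the paper states that Theorem \ref{passband2} follows by applying the reciprocal relation \eqref{flippedspectra} together with Theorem \ref{passband}, which is precisely what you do.
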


Observe that since $1/k<1$ Theorems \ref{stopband2} and \ref{passband2} provide qualitative criteria  for sub wavelength control of band structure for $k>\overline{k}$. We conclude and point out that the results developed here provide rigorous criteria based on geometry and material properties for opening band gaps in both 2 and 3 dimensional periodic materials, see Theorems   \ref{stopband},  \ref{passband},    \ref{stopbandgeneral}  \ref{passbandgeneral},  \ref{stopband2}, and   \ref{passband2}.



\section*{Acknowledgements}
This research is supported by AFOSR MURI Grant FA9550-12-1-0489 administered through the University of New Mexico, NSF grant DMS-1211066, and NSF EPSCOR Cooperative Agreement No. EPS-1003897 with additional support from the Louisiana Board of Regents.

\bibliographystyle{plain}
\bibliography{Photonicrefs1}

\begin{thebibliography}{10}

\bibitem{AmmariKang1}
H.~Ammari, H.~Kang, S.~Soussi, and H.~Zribi.
\newblock Layer potential techniques in spectral analysis. part 2: Sensitivity
  analysis of spectral properties of high contrast band-gap materials.
\newblock {\em Multiscale Model. Simul.}, 5:646--663, 2006.

\bibitem{BendsoeSigmund}
M.~Bendsoe and O.~Sigmund.
\newblock {\em Topology Optimization Theory, Methods, and Applications}.
\newblock Springer Verlag, Berlin, Heidelberg, New York, 2004.

\bibitem{BergmanC}
D.J. Bergman.
\newblock The dielectric constant of a composite material - a problem in
  classical physics.
\newblock {\em Physics Reports}, 43:377--407, 1978.

\bibitem{BergmanES}
D.J. Bergman.
\newblock The dielectric constant of a simple cubic array of identical spheres.
\newblock {\em J. Phys. C.}, 12:4947--4960, 1979.

\bibitem{Bruno}
O.P. Bruno.
\newblock The effective conductivity of strongly heterogeneous composites.
\newblock {\em Proceedings of the Royal Society of London A: Mathematical and
  Physical Sciences}, 433:353--381, 1991.

\bibitem{CostabelBdryOps}
M.~Costabel.
\newblock Boundary integral operators on lipschitz domains: Elementary results.
\newblock {\em SIAM Journal of Mathematical Analysis}, 19(3):613--625, 1988.

\bibitem{CoxDobsonH}
S.J. Cox and D.C. Dobson.
\newblock Band structure optimization of two-dimensional photonic crystals in
  hpolarization.
\newblock {\em J. Comput. Phys.}, 158:214--224, 2000.

\bibitem{Elbert}
A.~Elbert.
\newblock Some recent results on the zeros of bessel functions and orthogonal
  polynomials.
\newblock {\em Journal of Computational and Applied Mathematics}, 133:65--83,
  2001.

\bibitem{FigKuch2}
A.~Figotin and P.~Kuchment.
\newblock Band-gap structure of the spectrum of periodic maxwell operators.
\newblock {\em Journal of Statistical Physics}, 74:447--455, 1994.

\bibitem{FigKuch3}
A.~Figotin and P.~Kuchment.
\newblock Band-gap structure of spectra of periodic dielectric and acoustic
  media. 1. scalar model.
\newblock {\em SIAM J. Appl. Math.}, 56:68--88, 1996.

\bibitem{FigKuch1}
A.~Figotin and P.~Kuchment.
\newblock Spectral properties of classical waves in high-contrast periodic
  media.
\newblock {\em SIAM J. Appl. Math.}, 58:683--702, 1998.

\bibitem{Friedlander}
L.~Friedlander.
\newblock On the density of states of periodic media in the large coupling
  limit.
\newblock {\em Communications in Partial Differential Equations}, 27:355--380,
  2002.

\bibitem{GoldenPap}
K.~Golden and G.~Papanicolaou.
\newblock Bounds for effective parameters of heterogeneous media by analytic
  continuuation.
\newblock {\em Commun, Math. Phys.}, 90:473--491, 1983.

\bibitem{HempelLienau}
R.~Hempel and K.~Lienau.
\newblock Spectral properties of periodic media in the large coupling limit.
\newblock {\em Communications in Partial Differential Equations},
  25:1445--1470, 2000.

\bibitem{J87}
S.~John.
\newblock Strong localization of photons in certain disordered dielectric
  superlattices.
\newblock {\em Phys. Rev. Lett.}, 58:2486--2489, 1987.

\bibitem{Joo}
I.~Joo.
\newblock On the control of a circular membrane.
\newblock {\em Acta Math. Hungar.}, 61:302--325, 1993.

\bibitem{Kang}
H.~Kang.
\newblock Layer potential approaches to interface problems.
\newblock In {\em Inverse Problems and Imaging: Panoramas et synth\'eses 44}.
  Soci\'et\'e Math\'ematique de France, 2013.

\bibitem{KaoOsherYablonovich2005}
C.Y. Kao, S.J. Osher, and E.~Yablonovitch.
\newblock Òmaximizing band gaps in two-dimensional photonic crystals by using
  level set methods.
\newblock {\em Appl. Phys. B}, 81:235--244, 2005.

\bibitem{TKato1}
T.~Kato.
\newblock On the convergence of the perturbation method, 1.
\newblock {\em Progr. Theor. Phys.}, 4:514--523, 1949.

\bibitem{TKato2}
T.~Kato.
\newblock On the convergence of the perturbation method, 2.
\newblock {\em Progr. Theor. Phys.}, 5:95--101, 1950.

\bibitem{KatoPerturb}
T.~Kato.
\newblock {\em Perturbation Theory for Linear Operators}.
\newblock Springer, Berlin Heidelberg, Germany, 1995.

\bibitem{Shapero}
D.~Khavinson, M.~Putinar, and H.~Shapiro.
\newblock Poincar\'e's variational problem in potential theory.
\newblock {\em Archive for Rational Mechanics and Analysis}, 185:143--184,
  2007.

\bibitem{Kuchment}
P.~Kuchment.
\newblock {\em Floquet Theory for Partial Differential Equations}.
\newblock Birkhauser Verlag, Basel, 1993.

\bibitem{RobertRobert1}
R.~Lipton and R.~Viator Jr.
\newblock Bloch waves in crystals and periodic high contrast media.
\newblock {\em ESAIM Mathematical Modeling and Numerical Analysis}, Accepted
  June 16, 2016.

\bibitem{MiltonES}
R.C. McPhedran and G.W. Milton.
\newblock Bounds and exact theories for transport properties of inhomogeneous
  media.
\newblock {\em Applied Physics A.}, 26:207--220, 1981.

\bibitem{MenLeeFreundPeraireJohnson}
H.~Men, K.Y.K. Lee, R.M. Freund, J.~Peraire, and S.G. Johnson.
\newblock Robust topology optimization of three-dimensional photonic-crystal
  band-gap structures.
\newblock {\em Optics Express 22634}, 22(19), 2014.

\bibitem{Milton}
G.W. Milton.
\newblock {\em The Theory of Composites}.
\newblock Cambridge University Press, Cambridge, 2002.

\bibitem{OdehKeller}
F.~Odeh and J.B. Keller.
\newblock Partial differential equations with periodic coefficients and bloch
  waves in crystals.
\newblock {\em J. Math. Phys.}, 5:1499--1504, 1964.

\bibitem{ReedSimon}
M.~Reed and B.~Simon.
\newblock {\em Methods of Modern Mathematical Physics, Vol. IV: Analysis}.
\newblock Academic Press, New York, 1978.

\bibitem{Selden}
J.~Selden.
\newblock Periodic operators in high-contrast media and the integrated density
  of states function.
\newblock {\em Communications in Partial Differential Equations},
  30:1021--1037, 2005.

\bibitem{SzNagy}
B.~Sz.-Nagy.
\newblock Perturbations des transformations autoadjoints dans l\'espace de
  hilbert.
\newblock {\em Comment. Math. Helv.}, 19:347--366, 1946.

\bibitem{WangJensenSigmund2011}
F.~Wang, J.~S. Jensen, and O.~Sigmund.
\newblock Robust topology optimization of photonic crystal waveguides with
  tailored dispersion properties.
\newblock {\em J. Opt. Soc. Am. B}, 28:767--784, 2011.

\bibitem{Wilcox}
C.~Wilcox.
\newblock Theory of bloch waves.
\newblock {\em J. Analyse Math.}, 33:146--167, 1978.

\bibitem{Y}
E.~Yablonovitch.
\newblock Inhibited spontaneous emission in solid-state physics and
  electronics.
\newblock {\em Phys. Rev. Lett.}, 63:2059--2062, 1987.

\end{thebibliography}






\end{document}